\setlist[enumerate]{itemsep=0pt,parsep=0pt}
\title{Diamond diagrams and multivariable ($\varphi,\OK\x$)-modules}
\author{Yitong Wang\thanks{E-mail address: \texttt{yitongw.wang@utoronto.ca}}}
\date{}
\def\AA{\mathbb{A}}
\def\FF{\mathbb{F}}
\def\VV{\mathbb{V}}
\def\ZZ{\mathbb{Z}}
\def\NNN{\mathbb{Z}_{\geq0}}
\def\cJ{\mathcal{J}}
\def\cM{\mathcal{M}}
\def\cO{\mathcal{O}}
\def\fM{\mathfrak{M}}
\def\fm{\mathfrak{m}}
\def\ft{\mathfrak{t}}
\def\fw{\mathfrak{w}}
\def\det{\operatorname{det}}
\def\Fil{\operatorname{Fil}}
\def\Gal{\operatorname{Gal}}
\def\GL{\operatorname{GL}}
\def\Hom{\operatorname{Hom}}
\def\id{\operatorname{id}}
\def\Ind{\operatorname{Ind}}
\def\JH{\operatorname{JH}}
\def\M{\operatorname{M}}
\def\Mat{\operatorname{Mat}}
\def\max{\operatorname{max}}
\def\min{\operatorname{min}}
\def\mod{\operatorname{mod}}
\def\nss{\operatorname{nss}}
\def\soc{\operatorname{soc}}
\def\ss{\operatorname{ss}}
\def\Sym{\operatorname{Sym}}
\def\unr{\operatorname{un}}
\def\cc{^{\circ}}
\def\eqdef{\overset{\mathrm{def}}{=}}
\def\Fp{\mathbb{F}_p}
\def\Fq{\mathbb{F}_q}
\def\iff{\Leftrightarrow}
\def\into{\hookrightarrow}
\def\inv{^{-1}}
\def\j{^{(j)}}
\def\jj{^{(f-1-j)}}
\def\OK{\mathcal{O}_K}
\def\onto{\twoheadrightarrow}
\def\Qp{\mathbb{Q}_p}
\def\Qpbar{\overline{\mathbb{Q}}_p}
\def\rbar{\overline{r}}
\def\rhobar{\overline{\rho}}
\def\sprod{\scalebox{1}{$\prod$}}
\def\ssum{\scalebox{1}{$\sum$}}
\def\x{^{\times}}
\newcommand{\abs}[1]{|#1|}
\newcommand{\babs}[1]{\left|#1\right|}
\newcommand{\bbbra}[1]{\left[#1\right]}
\newcommand{\bbra}[1]{\left(#1\right)}
\newcommand{\bigabs}[1]{\big|#1\big|}
\newcommand{\bigang}[1]{\big\langle#1\big\rangle}
\newcommand{\bigbbbra}[1]{\big[#1\big]}
\newcommand{\bigbra}[1]{\big(#1\big)}
\newcommand{\Bigbra}[1]{\Big(#1\Big)}
\newcommand{\bigset}[1]{\big\{#1\big\}}
\newcommand{\bra}[1]{(#1)}
\newcommand{\dbra}[1]{(\!(#1)\!)}
\newcommand{\ddbra}[1]{[\![#1]\!]}
\newcommand{\op}[1]{\operatorname{#1}}
\newcommand{\ovl}[1]{\overline{#1}}
\newcommand{\pmat}[1]{\begin{pmatrix}#1\end{pmatrix}}
\newcommand{\set}[1]{\{ #1 \}}
\newcommand{\smat}[1]{\left(\begin{smallmatrix}#1\end{smallmatrix}\right)}
\newcommand{\sset}[1]{\left\{ #1 \right\}}
\newcommand{\un}[1]{\underline{#1}}
\newcommand{\wh}[1]{\widehat{#1}}
\newcommand{\wt}[1]{\widetilde{#1}}
\begin{document}

\newtheorem{definition}{Definition}[section] 
\newtheorem{remark}[definition]{Remark}
\newtheorem{example}[definition]{Example}
\newtheorem{proposition}[definition]{Proposition}
\newtheorem{lemma}[definition]{Lemma}
\newtheorem{corollary}[definition]{Corollary}
\newtheorem{theorem}[definition]{Theorem}
\newtheorem{conjecture}[definition]{Conjecture}

\maketitle

\begin{abstract}
    Let $p$ be a prime number and $K$ a finite unramified extension of $\Qp$. Let $\pi$ be an admissible smooth mod $p$ representation of $\GL_2(K)$ occurring in some Hecke eigenspaces of the mod $p$ cohomology and $\rbar$ be its underlying global two-dimensional Galois representation. When $\rbar$ satisfies some Taylor--Wiles hypotheses and is sufficiently generic at $p$, we compute explicitly certain constants appearing in the diagram associated to $\pi$, generalizing the results of Dotto-Le in \cite{DL21}. As a result, we prove that the associated \'etale $(\varphi,\OK\x)$-module $D_A(\pi)$ defined by Breuil-Herzig-Hu-Morra-Schraen is explicitly determined by the restriction of $\rbar$ to the decomposition group at $p$, generalizing the results of Breuil-Herzig-Hu-Morra-Schraen in \cite{BHHMS3} and the author in \cite{Wang3}.
\end{abstract}

\tableofcontents

\section{Introduction}

Let $p$ be a prime number and $F$ be a totally real number field that is unramified at places above $p$. Let $D$ be a quaternion algebra with center $F$ that is split at all places above $p$ and at exactly one infinite place. For each compact open subgroup $U\subseteq(D\otimes_F\AA_F^{\infty})$ where $\AA_F^{\infty}$ is the set of finite ad\`eles of $F$, we denote by $X_U$ the associated smooth projective algebraic Shimura curve over $F$.

Let $\FF$ be a sufficiently large finite extension of $\Fp$. We fix an absolutely irreducible continuous representation $\rbar:\Gal(\ovl{F}/F)\to\GL_2(\FF)$. For $w$ a finite place of $F$, we write $\rbar_w\eqdef\rbar|_{\Gal(\ovl{F}_w/F_w)}$. We let $S_D$ be the set of finite places where $D$ ramifies, $S_{\rbar}$ be the set of finite places where $\rbar$ ramifies, and $S_p$ the set of places above $p$. We fix a place $v\in S_p$ and write $K\eqdef F_v$. We assume that $p\geq5$, that $\rbar|_{\Gal(\ovl{F}/F(\sqrt[p]{1}))}$ is absolutely irreducible, that the image of $\rbar(G_{F(\sqrt[5]{1})})$ in $\op{PGL}_2(\FF)$ is not isomorphic to $A_5$, that $\rbar_w$ is generic in the sense of \cite[Def.~11.7]{BP12} for $w\in S_p$ and that $\rbar_w$ is non-scalar for $w\in S_D$. Then there is a so-called ``local factor'' defined in \cite[\S3.3]{BD14} and \cite[\S6.5]{EGS15} as follows:
\begin{equation}\label{Constants Eq local factor}
    \pi\eqdef\Hom_{U^v}\bigg(\ovl{M}^v,\Hom_{\Gal(\ovl{F}/F)}\Big(\rbar,\varinjlim\limits_V H^1_{\text{\'et}}(X_V\times_F\ovl{F},\FF)\Big)\bigg)[\fm'],
\end{equation}
where the inductive limit runs over the compact open subgroups $V\subseteq(D\otimes_F\AA_F^{\infty})\x$, and we refer to \cite[\S3.3]{BD14} and \cite[\S6.5]{EGS15} for the definitions of the compact open subgroup $U^v\subseteq(D\otimes_F\AA_F^{\infty,v})\x$, the (finite-dimensional) irreducible smooth representation $\ovl{M}^v$ of $U^v$ over $\FF$, and the maximal ideal $\fm'$ in a certain Hecke algebra. We assume that $\rbar$ is modular in the sense that $\pi\neq0$.

Then the key question is to understand the $\GL_2(K)$-representation $\pi$ in \eqref{Constants Eq local factor}. It is hoped that the representation $\pi$ can be used to realize the mod $p$ Langlands correspondence for $\GL_2(K)$. In particular, we hope that $\pi$ only depends on $\rbar_v$ and would like to find a description of $\pi$ in terms of $\rbar_v$. There have been many results on the representation-theoretic properties of $\pi$ as above. For example, under some mild assumptions on $\rbar$ it is known that
\begin{enumerate}
    \item 
    $\pi^{K_1}\cong D_0(\rbar_v^{\vee})$ as $K\x\GL_2(\OK)$-representations (\cite{Le19}), where $K_1\eqdef1+p\M_2(\OK)$, $D_0(\rbar_v^{\vee})$ is the (finite-dimensional) representation of $\GL_2(\OK)$ defined in \cite[\S13]{BP12} and $K\x$ acts on $D_0(\rbar_v^{\vee})$ by the character $\det(\rbar_v^{\vee})\omega\inv$ with $\omega$ the mod $p$ cyclotomic character.
    \item 
    the Diamond diagram $(\pi^{I_1}\into\pi^{K_1})$ only depends on $\rbar_v$ (\cite{DL21}), where $I_1\eqdef\smat{1+p\OK&\OK\\p\OK&1+p\OK}$. Moreover, \cite{DL21} computed explicitly many constants appearing in the diagram when $\rbar_v$ is assumed to be semisimple.
\end{enumerate}
However, the complete understanding of $\pi$ still seems a long way off. 

In this article, we generalize the computation of \cite{DL21} and compute explicitly many constants appearing in the diagram $(\pi^{I_1}\into\pi^{K_1})$ when $\rbar_v$ is non-semisimple, which is much more complicated than in the semisimple case. As a result, (when $p$ is sufficiently large with respect to $f\eqdef[K:\Qp]$ with some more assumptions on $\rbar$) we prove a local-global compatibility result for $\pi$ as above which was conjectured by Breuil-Herzig-Hu-Morra-Schraen (\cite[Conj.~3.1.2]{BHHMS3}).

\hspace{\fill}

To state the main result, we refer to \cite{BHHMS2} for the definition of the ring $A$ and the notion of \'etale $(\varphi,\OK\x)$-modules over $A$ (see also \S\ref{Constants Sec main result}). In \cite{BHHMS2}, Breuil-Herzig-Hu-Morra-Schraen attached to $\pi$ as in \eqref{Constants Eq local factor} an \'etale $(\varphi,\OK\x)$-module $D_A(\pi)$ over $A$. In \cite{BHHMS3}, they also gave a conjectural description of $D_A(\pi)$ in terms of $\rbar_v$ by constructing a functor $D_A^{\otimes}$ from the category of finite-dimensional continuous representations of $\Gal(\ovl{K}/K)$ over $\FF$ to the category of \'etale $(\varphi,\OK\x)$-modules over $A$.

We assume moreover that
\begin{enumerate}
    \item
    the framed deformation ring $R_{\rbar_w}$ of $\rbar_w$ over the Witt vectors $W(\FF)$ is formally smooth for $w\in(S_D\cup S_{\rbar})\setminus S_p$;
    \item
    $\rbar_v$ is of the following form up to twist:
    \begin{equation}\label{Constants Eq rbarv}
        \rbar_{v}|_{I_{K}}\cong\pmat{\omega_f^{\sum\nolimits_{j=0}^{f-1}(r_j+1)p^j}&*\\0&1}~\text{with}~\max\set{12,2f\!+\!1}\leq r_j\leq p\!-\!\max\set{15,2f\!+\!3}~\forall\,j,
    \end{equation}
    where $I_{K}\subseteq\Gal(\ovl{K}/K)$ is the inertia subgroup and $\omega_f$ is the fundamental character of level $f$. 
\end{enumerate}
Our main result is the following, which verifies the conjecture \cite[Conj.~3.1.2]{BHHMS3} in our setting.

\begin{theorem}\label{Constants Thm main}
    Let $\pi$ be as in \eqref{Constants Eq local factor} and keep all the assumptions on $\rbar$. Then we have an isomorphism of \'etale $(\varphi,\OK\x)$-modules
    \begin{equation*}
        D_A(\pi)\cong D_A^{\otimes}(\rbar_v(1)).
    \end{equation*}
\end{theorem}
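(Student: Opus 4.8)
The plan is to reduce the global statement to a purely local computation on the diagram attached to $\pi$, and then to match that computation against the explicit description of the functor $D_A^{\otimes}$ evaluated at $\rbar_v(1)$. The starting point is the machinery of Breuil--Herzig--Hu--Morra--Schraen: the \'etale $(\varphi,\OK\x)$-module $D_A(\pi)$ is built functorially out of $\pi$, and by their results (together with \cite{DL21}, \cite{Le19}) it is controlled by the finite data of the Diamond diagram $(\pi^{I_1}\hookrightarrow\pi^{K_1})$, which by \cite{Le19} has $\pi^{K_1}\cong D_0(\rbar_v^\vee)$ and by \cite{DL21} depends only on $\rbar_v$. So the first step is to recall precisely how $D_A(\pi)$ is extracted from the diagram, and to isolate exactly which ``constants'' in the diagram enter the formula for $D_A(\pi)$; these are the gluing scalars relating the various $\GL_2(\OK)$-subquotients of $\pi^{K_1}$ under the action of the elements of $I_1$ (equivalently, under the Hecke/Jacobi-type operators used in \emph{loc.\ cit.}).

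The second and main step is the explicit determination of those constants when $\rbar_v$ is of the non-split form \eqref{Constants Eq rbarv}. Here I would follow the strategy of \cite{DL21} in the semisimple case but now keep track of the extension class $*$. Concretely: realize $\pi$ (or rather the relevant finite piece of it) inside the mod $p$ cohomology via \eqref{Constants Eq local factor}, pass to the associated Galois deformation / patching module, and use the Taylor--Wiles--Kisin patched module $M_\infty$ together with the local structure of Kisin modules / Breuil modules for potentially Barsotti--Tate deformations of $\rbar_v$ with tame principal series types. The constants in the diagram can be read off from the action of $\GL_2(\OK)$ on the lattices produced this way; the non-semisimple case forces one to resolve a genuine sign/scalar ambiguity that is invisible when $\rbar_v$ is semisimple, and this is where the genericity bounds $\max\{12,2f{+}1\}\le r_j\le p-\max\{15,2f{+}3\}$ are consumed — they guarantee that the relevant Jordan--H\"older factors are distinct and that the combinatorics of the Breuil module computation (the ``shape'' calculations) behave as in the generic regime, so that the scalars are pinned down uniquely rather than up to an unknown unit. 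I would organize this as: (a) compute the diagram constants for $\rbar_v$ semisimple as a base case (essentially \cite{DL21}), (b) compute the constants attached to a non-split extension by an explicit lattice/Breuil-module computation, isolating the dependence on $*$, (c) check these are compatible with twisting and with the normalizations in \cite{BHHMS2}.

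The third step is to compute the right-hand side $D_A^{\otimes}(\rbar_v(1))$ directly from the definition of the functor $D_A^{\otimes}$ in \cite{BHHMS3}: since $\rbar_v(1)$ is (up to twist) a non-split extension of two characters whose restrictions to $I_K$ are explicit powers of $\omega_f$, the tensor-induction-type construction defining $D_A^{\otimes}$ can be evaluated in closed form, and one gets an \'etale $(\varphi,\OK\x)$-module presented by explicit matrices over $A$, again depending on the extension class. Finally, one compares: the matrices computed in step two (from $\pi$) must agree with those of step three (from $\rbar_v(1)$) after the identification of $\pi^{K_1}$ with $D_0(\rbar_v^\vee)$ and the dictionary between diagram data and $(\varphi,\OK\x)$-modules set up in \cite{BHHMS2}. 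The twist by $(1)$ accounts for the Tate twist discrepancy between the Galois representation attached to $\pi$ via cohomology and the one feeding the functor $D_A^\otimes$, matching the $K\x$-action normalization $\det(\rbar_v^\vee)\omega^{-1}$ recorded above.

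I expect the main obstacle to be step two, specifically the extraction of the diagram constants in the genuinely non-semisimple case: unlike the semisimple situation treated in \cite{DL21}, here one cannot decouple the two Jordan--H\"older constituents of $\rbar_v$, so the Breuil-module / patched-lattice computation must track the extension parameter through a system of congruences, and showing that the answer is a \emph{specific} unit (not merely well-defined up to scalars) requires the strengthened genericity hypotheses and a careful normalization argument — this is presumably why the bounds in \eqref{Constants Eq rbarv} are so much stronger than the naive genericity of \cite{BP12}. A secondary difficulty is bookkeeping: reconciling the several normalization conventions (duals, Tate twists, the $\det\cdot\omega^{-1}$ central character, the conventions of \cite{BHHMS2} versus \cite{BHHMS3}) so that the final isomorphism is on the nose and not just up to an unspecified twist.
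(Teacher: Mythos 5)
Your overall architecture does match the paper's: identify $\pi^{K_1}$ with $D_0(\rbar_v^\vee)$, compute constants of the Diamond diagram via the patched module $M_\infty$ and Kisin modules of tame type following \cite{DL21}, evaluate $D_A^{\otimes}(\rbar_v(1))$ in explicit matrix form, and compare. But there is a genuine gap at the decisive step, and it is not the normalization bookkeeping you flag at the end: you never specify \emph{which} constants of the diagram are well defined in the non-semisimple case, nor why a list of such constants suffices to determine $D_A(\pi)$. The constants of \cite{DL21} are obtained by composing the maps $g_{\chi}$ around a cycle $\delta^d(\chi)=\chi$; when $\rbar_v$ is non-split there are no cycles, since every character flows under $\delta(\chi)=R\chi^s$ to the distinguished character $\chi_{\emptyset}$. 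The paper must therefore redefine the constants by comparing the two chains starting from $\chi_J$ and from $\chi_{J^{\ss}}$ (both terminating at $\chi_{\emptyset}$), yielding the quantities $\gamma(J)$ for $J\nsubseteq J_{\rhobar}$, $J\neq J^*$, supplemented by $\gamma_{\emptyset,\emptyset}$ and the product $\gamma_{(J^*-1)^{\ss},J^*}\gamma_{J^*,(J^*-1)^{\ss}}$ attached to the exceptional subset $J^*$ of Lemma \ref{Constants Lem J*}. Without such basis-independent quantities, your final step ``the matrices from step two must agree with those from step three'' is not a proof: both matrices depend on the choice of eigenvectors $v_J$, i.e.\ are only defined up to conjugation by diagonal matrices, and one needs the completeness statement (Lemma \ref{Constants Lem conjugation}) that the listed invariants determine the matrix up to exactly that ambiguity, as well as the relations among $I_1$-invariants (e.g.\ Proposition \ref{Constants Prop ratio Js}) that tie the diagram constants to the entries of $\Hom_A(D_A(\pi),A)(1)$ furnished by \cite{Wang2}. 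This selection-and-sufficiency step is precisely the new idea beyond \cite{DL21}, and it is absent from your plan.

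Two secondary corrections. First, the computation does not proceed by taking the semisimple case as a base case and perturbing in the extension class: each constant factors as $\gamma(J)=U_p(J)c'(J)$, where $U_p(J)$ is read off from explicit Kisin-module matrices over the tamely Barsotti--Tate deformation ring (this is where the Fontaine--Laffaille parameters $d_j$, i.e.\ the extension class, enter, Proposition \ref{Constants Prop UpJ}), while $c'(J)$ is a separate computation with the $S$-operators of the group algebra acting on lattices in a fixed tame type $\theta_0$, which in the non-semisimple case is \emph{cuspidal} rather than principal series, and whose cuspidality is actually used. Second, the comparison is made against the twisted dual $\Hom_A(D_A(\pi),A)(1)$, whose explicit presentation in terms of the diagram constants is itself a nontrivial input from \cite{Wang2} (and the explicit form of $D_A^{\otimes}$ from \cite{Wang3}); this ``dictionary between diagram data and $(\varphi,\OK\x)$-modules'' is not automatic from \cite{BHHMS2} and would otherwise have to be established within your argument.
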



Theorem \ref{Constants Thm main} is proved by \cite[Thm.~3.1.3]{BHHMS3} when $\rbar_v$ is semisimple, and proved by \cite[Thm.1.1]{Wang3} when $\rbar_v$ is maximally non-split in the sense that $|W(\rbar_v)|=1$, where $W(\rbar_v)$ is the set of Serre weights of $\rbar_v$ defined in \cite[\S3]{BDJ10}. The proof of Theorem \ref{Constants Thm main} is based on the explicit computation of certain constants appearing in the Diamond diagram $(\pi^{I_1}\into\pi^{K_1})$ in the sense of \cite{DL21}, together with the results of \cite{Wang2} on $D_A(\pi)$ and the results of \cite{Wang3} on $D_A^{\otimes}(\rbar_v(1))$.

\hspace{\fill}

We describe certain constants in the Diamond diagram and the strategy of the proof of Theorem \ref{Constants Thm main} in more detail. 


We let $R:\pi^{I_1}\to\bigbra{\soc_{\GL_2(\OK)}\pi}^{I_1}$ be the map defined as in \cite[Def.~4.1]{DL21} and we write $I\eqdef\smat{\OK\x&\OK\\p\OK&\OK\x}$. Given an $I$-character $\chi$, we write $R\chi$ for the $I$-character such that $R\bigbra{\pi^{I_1}[\chi]}=\pi^{I_1}[R\chi]$. In particular, we have $R\chi=\chi$ if and only if $\chi$ appears in $\bigbra{\soc_{\GL_2(\OK)}\pi}^{I_1}$. Then we define the nonzero map $g_{\chi}:\pi^{I_1}[R\chi]\to \pi^{I_1}[R\chi^s]$ between $1$-dimensional $\FF$-vector spaces by the formula $g_{\chi}\bra{R(v)}=R\bigbra{\!\!\smat{0&1\\p&0}v}$ for $v\in \pi^{I_1}[\chi]$, where $\chi^s$ is the conjugation of $\chi$ by the matrix $\smat{0&1\\p&0}$. All the constants in the Diamond diagram are then defined in terms of suitable compositions of the maps $g_{\chi}$. We write $\delta(\chi)\eqdef R\chi^s$.

When $\rbar_v$ is semisimple, for any $I$-character $\chi$ appearing in $\pi^{I_1}$ such that $R\chi=\chi$ there exists an integer $d\geq1$ such that $\delta^d(\chi)=\chi$. Then the composition 
\begin{equation*}
    \pi^{I_1}[\chi]\xrightarrow{g_{\chi}} \pi^{I_1}[\delta(\chi)]\xrightarrow{g_{\delta(\chi)}}\cdots\rightarrow \pi^{I_1}[\delta^d(\chi)]=\pi^{I_1}[\chi]
\end{equation*}
is given by a scalar $g(\chi)\in\FF\x$, which is an example of the constants in the Diamond diagram. The constants $g(\chi)$ for $R\chi=\chi$ are computed explicitly by \cite{DL21}, which is enough to determine the structure of $D_A(\pi)$ and to conclude Theorem \ref{Constants Thm main} in the semisimple case.

When $\rbar_v$ is non-semisimple, the situation is completely different. For any $I$-character $\chi$ appearing in $\pi^{I_1}$, it converges to a distinguished $I$-character $\chi_{0}$ in the sense that there exists $\ell(\chi)\geq1$ such that $\delta^{\ell}(\chi)=\chi_0$ for all $\ell\geq\ell(\chi)$. To obtain a constant in the Diamond diagram, we consider the following two maps:
\begin{equation*}
\begin{aligned}
    &\pi^{I_1}[R\chi]\xrightarrow{g_{\chi}} \pi^{I_1}[\delta(\chi)]\xrightarrow{g_{\delta(\chi)}}\cdots\rightarrow \pi^{I_1}[\delta^{\ell(\chi)}(\chi)]=\pi^{I_1}[\chi_0];\\
    &\pi^{I_1}[R\chi]\xrightarrow{g_{R\chi}} \pi^{I_1}[\delta(R\chi)]\xrightarrow{g_{\delta(R\chi)}}\cdots\rightarrow \pi^{I_1}[\delta^{\ell(R\chi)}(R\chi)]=\pi^{I_1}[\chi_0].
\end{aligned}
\end{equation*}    
Then the composition 
\begin{equation*}
    \bbra{\sprod^{0}_{i=\ell(R\chi)-1}g_{\delta^i(R\chi)}}\inv\circ\bbra{\sprod^{0}_{i=\ell(\chi)-1}g_{\delta^i(\chi)}}:\pi^{I_1}[R\chi]\to \pi^{I_1}[R\chi]
\end{equation*}
is given by a scalar $g(\chi)\in\FF\x$, which is an example of the constants in the Diamond diagram.

The key step in the non-semisimple case is to find a suitable collection of $I$-characters $\chi$ such that the constants $g(\chi)$ are enough to determine the structure of $D_A(\pi)$. This is based on a detailed study of the relation between $I_1$-invariants of $\pi$.

Then we compute explicitly these constants $g(\chi)$ following the strategy of \cite{DL21} and we refer to \cite[\S1]{DL21} for a more detailed introduction. The computation is much more delicate than in the semisimple case and takes up a substantial portion of the article. Finally, we are able to conclude Theorem \ref{Constants Thm main} using these computations together with the results of \cite{Wang2} on $D_A(\pi)$ and the results of \cite{Wang3} on $D_A^{\otimes}(\rbar_v(1))$.

The proof of Theorem \ref{Constants Thm main} is very computational. There may exist a more conceptual proof one day, which will hopefully avoid the genericity assumptions on $\rbar_v$ and the technical computations, but such proof is not known so far.

\subsection*{Organization of the article}

In \S\ref{Constants Sec sw}, we define all the basic objects that are needed throughout this article. In \S\ref{Constants Sec mu}, we study the relation between $I_1$-invariants of $\pi$ that are needed to form the necessary constants in the Diamond diagram. In \S\ref{Constants Sec Diamond diagram}, we review the strategy of \cite{DL21} and specialize to the non-semisimple case. In particular, the computation of the constants in the Diamond diagram can be divided into two parts: one comes from certain elements in tamely potentially Barsotti--Tate deformation rings and is the content of \S\ref{Constants Sec Kisin}, the other comes from the action of certain elements of the group algebra of $\GL_2(\OK)$ on tame types and is the content of \S\ref{Constants Sec c'J}. Finally, in \S\ref{Constants Sec main result} we combine the results of the previous sections and the results of \cite{Wang2} and \cite{Wang3} to finish the proof of Theorem \ref{Constants Thm main}.

\subsection*{Acknowledgements}

We thank Christophe Breuil for his guidance in this area. We thank Florian Herzig for his interest in this work, for regular communication, and for his comments.

This work was supported by the University of Toronto.

\subsection*{Notation}

Let $p$ be a prime number. We fix an algebraic closure $\Qpbar$ of $\Qp$. Let $K\subseteq\Qpbar$ be the unramified extension of $\Qp$ of degree $f\geq1$ with ring of integers $\OK$ and residue field $\Fq$ (hence $q=p^f$). We denote by $G_K\eqdef\Gal(\Qpbar/K)$ the absolute Galois group of $K$ and $I_K\subseteq G_K$ the inertia subgroup. Let $\FF$ be a large enough finite extension of $\FF_p$. Fix an embedding $\sigma_0:\Fq\into\FF$ and let $\sigma_j\eqdef\sigma_0\circ\varphi^j$ for $j\in\ZZ$, where $\varphi:x\mapsto x^p$ is the arithmetic Frobenius on $\Fq$. We identify $\cJ\eqdef\Hom(\Fq,\FF)$ with $\set{0,1,\ldots,f-1}$, which is also identified with $\ZZ/f\ZZ$ so that the addition and subtraction in $\cJ$ are modulo $f$. For $a\in\OK$, we denote by $\ovl{a}\in\Fq$ its reduction modulo $p$. For $a\in\Fq$, we also view it as an element of $\FF$ via $\sigma_0$.

For $F$ a perfect ring of characteristic $p$, we denote by $W(F)$ the ring of Witt vectors of $F$. For $x\in F$, we denote by $[x]\in W(F)$ its Techm\"uller lift.

Let $E$ be a finite extension of $\Qp$ with ring of integers $\cO$ and residue field $\FF$. We view elements of $\OK$ and elements of $W(\FF)$ as elements of $\cO$ via the embedding $\OK=W(\Fq)\stackrel{\sigma_0}{\into} W(\FF)\into\cO$.

Let $I\eqdef\smat{\OK\x&\OK\\p\OK&\OK\x}$ be the Iwahori subgroup of $\GL_2(\OK)$, $I_1\eqdef\smat{1+p\OK&\OK\\p\OK&1+p\OK}$ be the pro-$p$ Iwahori subgroup, $K_1\eqdef1+p\M_2(\OK)$ be the first congruence subgroup, $N_0\eqdef\smat{1&\OK\\0&1}$ and $H\eqdef\smat{[\Fq\x]&0\\0&[\Fq\x]}$.

For $P$ a statement, we let $\delta_P\eqdef1$ if $P$ is true and $\delta_P\eqdef0$ otherwise.

Throughout this article, we let $\pi$ be as in \eqref{Constants Eq local factor} and $\rhobar\eqdef\rbar_v^{\vee}$. Twisting $\rhobar$ and $\pi$ using \cite[Lemma~2.9.7]{BHHMS3} and \cite[Lemma~3.1.1]{BHHMS3}, we assume moreover that
\begin{equation}\label{Constants Eq rhobar}
    \rhobar\cong\pmat{\omega_f^{\sum\nolimits_{j=0}^{f-1}(r_j+1)p^j}\unr(\xi)&*\\0&\unr(\xi)\inv},
\end{equation}
where $r_j$ is as in \eqref{Constants Eq rbarv}, $\xi\in\FF\x$, $\unr(\xi):G_K\to\FF\x$ is the unramified character sending geometric Frobenius elements to $\xi$, and $\omega_f:G_K\to\FF\x$ is such that $\omega_f(g)$ is the reduction modulo $p$ of $g(\sqrt[q-1]{-p})/\sqrt[q-1]{-p}\in\cO\x$ for all $g\in G_K$ and for any choice of a $(q-1)$-th root $\sqrt[q-1]{-p}$ of $-p$. In particular, $p$ acts trivially on $\pi$.

\section{Preliminaries}\label{Constants Sec sw}

We write $\un{i}$ for an element $(i_0,\ldots,i_{f-1})\in\ZZ^f$. For $a\in\ZZ$, we write $\un{a}\eqdef(a,\ldots,a)\in\ZZ^f$. For $J\subseteq\cJ$, we define $\un{e}^J\in\ZZ^f$ by $e^J_j\eqdef\delta_{j\in J}$. We say that $\un{i}\leq\un{i}'$ if $i_j\leq i'_j$ for all $j$. We define the left shift $\delta:\ZZ^f\to\ZZ^f$ by $\delta(\un{i})_j\eqdef i_{j+1}$. Abusing notation, we still denote by $\un{i}$ the integer $\ssum_{j=0}^{f-1}i_jp^j\in\ZZ$.

A \textbf{Serre weight} is an isomorphism class of an absolutely irreducible representation of $\GL_2(\Fq)$ over $\FF$. For $\lambda=(\un{\lambda}_1,\un{\lambda}_2)\in\ZZ^{2f}$ such that $\un{0}\leq\un{\lambda}_1-\un{\lambda}_2\leq\un{p}-\un{1}$, we define 
\begin{equation*}
    F(\lambda)\eqdef\scalebox{1}{$\bigotimes$}_{j=0}^{f-1}\bbra{\!\bbra{\Sym^{\lambda_{1,j}-\lambda_{2,j}}\Fq^2\otimes_{\Fq}\det^{\lambda_{2,j}}}\otimes_{\Fq,\sigma_j}\FF}.
\end{equation*}
We also denote it by $(\un{\lambda}_1-\un{\lambda}_2)\otimes\det^{\un{\lambda}_2}$. 

For $\lambda=(\un{\lambda}_1,\un{\lambda}_2)\in\ZZ^{2f}$, we define the character $\chi_{\lambda}:I\to\FF\x$ by $\smat{a&b\\pc&d}\mapsto(\ovl{a})^{\un{\lambda}_1}(\ovl{d})^{\un{\lambda}_2}$, where $a,d\in\OK\x$ and $b,c\in\OK$. In particular, if $\un{0}\leq\un{\lambda}_1-\un{\lambda}_2\leq\un{p}-\un{1}$, then $\chi_{\lambda}$ is the $I$-character acting on $F(\lambda)^{I_1}$. We still denote by $\chi_{\lambda}$ its restriction to $H$.

We write $\un{r}=(r_0,\ldots,r_{f-1})$ with $r_j$ as in \eqref{Constants Eq rbarv}. For $\un{b}\in\ZZ^f$ such that $-\un{r}\leq\un{b}\leq\un{p}-\un{2}-\un{r}$, we denote by $\sigma_{\un{b}}$ the Serre weight $F\bigbra{\ft_{\bra{\un{r},\un{0}}}(\un{b})}$ (see \cite[\S2.4]{BHHMS1} for $\ft_{\bra{\un{r},\un{0}}}(\un{b})\in\ZZ^{2f}$). 

For $\rhobar$ as in \eqref{Constants Eq rhobar}, we let $W(\rhobar)$ be the set of Serre weights of $\rhobar$ defined in \cite[\S3]{BDJ10} and $J_{\rhobar}\subseteq\cJ$ be the subset as in \cite[(17)]{Bre14}. Then by \cite[Prop.~A.3]{Bre14} and \cite[(14)]{BHHMS1}, the subset $J_{\rhobar}\subseteq\cJ$ is characterized by
\begin{equation*}
    W(\rhobar)=\sset{\sigma_{\un{b}}:~\begin{array}{ll}b_j=0&\text{if}~j\notin J_{\rhobar}\\b_j\in\set{0,1}&\text{if}~j\in J_{\rhobar}\end{array}}.
\end{equation*}
In particular, $\rhobar$ is semisimple if and only if $J_{\rhobar}=\cJ$. 

For each $J\subseteq\cJ$, as in \cite[\S2]{Wang2} we define the (distinct) Serre weight $\sigma_J\eqdef F\bra{\lambda_J}$ with $\lambda_J\eqdef\bra{\un{s}^J+\un{t}^J,\un{t}^J}$, where
\begin{align}
    \label{Constants Eq sJ}
    s^J_j&\eqdef
    \begin{cases}
        r_j&\text{if}~j\notin J,~j+1\notin J\\
        r_j+1&\text{if}~j\in J,~j+1\notin J\\
        p-2-r_j&\text{if}~j\notin J,~j+1\in J\\
        p-1-r_j&\text{if}~j\in J,~j+1\in J,~j\notin J_{\rhobar}\\
        p-3-r_j&\text{if}~j\in J,~j+1\in J,~j\in J_{\rhobar};
    \end{cases}\\
    \label{Constants Eq tJ}
    t^J_j&\eqdef
    \begin{cases}
        0&\text{if}~j\notin J,~j+1\notin J\\
        -1&\text{if}~j\in J,~j+1\notin J\\
        r_j+1&\text{if}~j\notin J,~j+1\in J~\text{or}~j\in J,~j+1\in J,~j\in J_{\rhobar}\\
        r_j&\text{if}~j\in J,~j+1\in J,~j\notin J_{\rhobar}.
    \end{cases}
\end{align}
In particular, for $J\subseteq J_{\rhobar}$ we have $\sigma_J=\sigma_{\un{e}^J}$. We also define the Serre weight $\sigma_{J}^s\eqdef F\bra{\lambda_{J^s}}$ with $\lambda_{J^s}\eqdef\bra{\un{s}^{J^s}+\un{t}^{J^s},\un{t}^{J^s}}$, where $s^{J^s}_j\eqdef p-1-s^J_j$ and $t^{J^s}_j\eqdef r_j-t^J_j$. We write $\chi_J\eqdef\chi_{\lambda_J}$ and $\chi_{J^s}\eqdef\chi_{\lambda_{J^s}}$ so that $\chi_J$ (resp.\,$\chi_{J^s}$) is the $I$-character acting on $\sigma_J^{I_1}$ (resp.\,$\bra{\sigma_J^s}^{I_1}$). For each $I$-character $\chi$, we denote by $\chi^s$ its conjugation by the matrix $\smat{0&1\\p&0}$. Then we have $\chi_J^s=\chi_{J^s}$ and $\chi_{J^s}^s=\chi_J$ for all $J\subseteq\cJ$.

For $J\subseteq\cJ$ and $k\in\ZZ$, we write $J+k\eqdef\set{j+k:j\in J}\subseteq\cJ$. Then we define 
\begin{equation}\label{Constants Eq various J}
\begin{array}{lll}
    J^{\ss}\eqdef J\cap J_{\rhobar};&J^{\nss}\eqdef J\setminus J_{\rhobar};&J^c\eqdef\cJ\setminus J;\\
    \partial J\eqdef J\setminus(J-1);&\delta_{\ss}(J)\eqdef(J-1)^{\ss};&J^{\delta}\eqdef J\Delta(J-1)^{\ss}.
\end{array}
\end{equation}
Here we recall that $J\Delta J'\eqdef (J\setminus J')\sqcup(J'\setminus J)$.

\begin{lemma}\label{Constants Lem J*}
    We have $\sigma_{J^s}=\sigma_{(J-1)^{\ss}}$ if and only if $J\Delta(J-1)^{\ss}=\cJ$, which happens for a unique $J^*\subseteq\cJ$ if $J_{\rhobar}\neq\cJ$. 
\end{lemma}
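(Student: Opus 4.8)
The plan is to reduce the statement to a combinatorial identity about subsets of $\cJ\cong\ZZ/f\ZZ$, using the explicit formulas \eqref{Constants Eq sJ}, \eqref{Constants Eq tJ} for $\sigma_{J^s}$ and the definition of $\sigma_{(J-1)^{\ss}}$. First I would unwind the condition $\sigma_{J^s}=\sigma_{(J-1)^{\ss}}$: since the $\sigma_{\un b}$ are distinct Serre weights parametrized by $\un b$ with $-\un r\leq\un b\leq\un p-\un2-\un r$, and since for $J'\subseteq J_{\rhobar}$ one has $\sigma_{J'}=\sigma_{\un e^{J'}}$, the weight $\sigma_{(J-1)^{\ss}}$ corresponds to the $0/1$-vector $\un e^{(J-1)^{\ss}}$. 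On the other hand $\sigma_{J^s}=F(\lambda_{J^s})$ with $\lambda_{J^s}=(\un s^{J^s}+\un t^{J^s},\un t^{J^s})$, $s^{J^s}_j=p-1-s^J_j$, $t^{J^s}_j=r_j-t^J_j$; I would apply the twist-normalization ($\ft_{(\un r,\un 0)}$) to rewrite $\sigma_{J^s}$ in the form $\sigma_{\un b}$ for an explicit $\un b=\un b(J)$ read off from \eqref{Constants Eq sJ}–\eqref{Constants Eq tJ}, case by case on whether $j\in J$, $j+1\in J$, $j\in J_{\rhobar}$. The key bookkeeping point is that this $\un b(J)$ has all coordinates in $\set{0,1}$ precisely when no ``boundary'' contributions of size $r_j$ appear, and tracking exactly which $j$ force $b_j\in\set{0,1}$ should yield $b_j = 1 \iff j\in J\Delta(J-1)^{\ss}$ and the requirement that $b_j\in\set{0,1}$ for all $j$ forces $J\Delta(J-1)^{\ss}=\cJ$.

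Concretely, I expect the computation to show: $s^{J^s}_j+t^{J^s}_j = (p-1-s^J_j)+(r_j-t^J_j)$, and combining with the five cases of \eqref{Constants Eq sJ} and the four cases of \eqref{Constants Eq tJ}, the pair $(s^{J^s}_j,t^{J^s}_j)$ matches the pair defining $\sigma_{\un e^{(J-1)^{\ss}}}$ at $j$ exactly when the local configuration $(\delta_{j\in J},\delta_{j+1\in J},\delta_{j\in J_{\rhobar}})$ lies in the prescribed set; and the outcome across all $j$ is that equality holds for every $j$ iff $e^{J\Delta(J-1)^{\ss}}_j = 1$ for every $j$, i.e. $J\Delta(J-1)^{\ss}=\cJ$. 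This gives the first (``if and only if'') assertion of the lemma.

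For the second assertion, assume $J_{\rhobar}\neq\cJ$ and analyze the equation $J\Delta(J-1)^{\ss}=\cJ$ for $J\subseteq\cJ$. Writing $x_j\eqdef\delta_{j\in J}\in\set{0,1}$ and $m_j\eqdef\delta_{j\in J_{\rhobar}}$, membership of $j$ in $(J-1)^{\ss}=(J-1)\cap J_{\rhobar}$ is $x_{j+1}m_j$, and the symmetric-difference condition becomes $x_j + x_{j+1}m_j \equiv 1 \pmod 2$ for all $j\in\ZZ/f\ZZ$, i.e. the recursion $x_j = 1 - x_{j+1}m_j$ (equivalently $x_{j+1}m_j = 1-x_j$). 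I would run this recursion around the cycle: at indices $j$ with $m_j=0$ we are forced to $x_j=1$; at indices with $m_j=1$ the value $x_j$ is determined by $x_{j+1}$. Since $J_{\rhobar}\neq\cJ$ there is at least one $j_0$ with $m_{j_0}=0$, hence $x_{j_0}=1$ is pinned down, and then propagating the recursion backwards (decreasing $j$ from $j_0$) determines all remaining $x_j$ uniquely and consistently — one checks the cycle closes up without contradiction because every ``$m_j=0$'' slot re-imposes the same value $x_j=1$ that the recursion already produces. Hence there is exactly one such $J$, call it $J^*$. The main obstacle I anticipate is purely organizational: keeping the many cases of \eqref{Constants Eq sJ}–\eqref{Constants Eq tJ} straight when converting $\sigma_{J^s}$ to standard $\sigma_{\un b}$-form and verifying that the ``extra'' shifts by $r_j$ cancel exactly as claimed; the cyclic-recursion argument for uniqueness is then routine once $J_{\rhobar}\neq\cJ$ is used to break the cycle.
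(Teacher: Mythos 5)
Your proposal is correct in substance, and your uniqueness argument (the cyclic recursion $x_j=1-x_{j+1}m_j$, anchored at any $j_0\notin J_{\rhobar}$, which exists because $J_{\rhobar}\neq\cJ$) is exactly the paper's characterization of $J^*$ spelled out. Where you differ is the equivalence: the paper never passes through the $\sigma_{\un{b}}$-normal form. For the ``only if'' direction it uses the single observation \eqref{Constants Eq Lem J*} that the $\un{t}$-part alone recovers $J$, namely $\set{j:t^J_j\in\set{r_j,r_j+1}}+1=J$; applied to $\lambda_{J^s}$ and $\lambda_{(J-1)^{\ss}}$ this turns $\un{t}^{J^s}=\un{t}^{(J-1)^{\ss}}$ directly into $(J-1)^{\ss}=J^c$, with no case analysis on the $\un{s}$-parts, while the ``if'' direction is a case check as in your plan. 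Your brute-force comparison of the pairs $(s^{J^s}_j,t^{J^s}_j)$ with those of $\lambda_{(J-1)^{\ss}}$ also works, but with two cautions. First, the detour through $\ft_{(\un{r},\un{0})}$ should be dropped: a general $\sigma_{J^s}$ need not be of the form $\sigma_{\un{b}}$ at all, and your sentence ``$b_j=1\iff j\in J\Delta(J-1)^{\ss}$ and $b_j\in\set{0,1}$ forces $J\Delta(J-1)^{\ss}=\cJ$'' is not coherent as stated; what you actually need, and state correctly afterwards, is that the parameters match at every $j$ if and only if $J\Delta(J-1)^{\ss}=\cJ$ (note also that the local configuration must record $\delta_{j\in J},\delta_{j+1\in J},\delta_{j+2\in J},\delta_{j\in J_{\rhobar}},\delta_{j+1\in J_{\rhobar}}$, not just three bits). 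Second, for the ``only if'' direction the hypothesis is an isomorphism of Serre weights, which gives $\un{s}^{J^s}=\un{s}^{(J-1)^{\ss}}$ on the nose but a priori only $\ssum_j\bigbra{t^{J^s}_j-t^{(J-1)^{\ss}}_j}p^j\equiv0\pmod{q-1}$; one needs the genericity bounds on $r_j$ (the coordinate differences are at most $r_j+2\leq p-13$ in absolute value, so the sum must vanish and then each digit vanishes) to upgrade this to coordinatewise equality of the $\un{t}$'s. The paper makes the same jump silently, but your write-up should make it explicit since your whole argument is phrased coordinatewise.
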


\begin{proof}
    The if part follows from a case-by-case examination. Conversely, by \eqref{Constants Eq tJ} we have
    \begin{equation}\label{Constants Eq Lem J*}
        \sset{j:t^{J}_j\in\set{r_j,r_j+1}}+1=J.
    \end{equation}
    Hence $\sigma_{J^s}=\sigma_{(J-1)^{\ss}}$ implies $\un{t}^{J^s}=\un{t}^{(J-1)^{\ss}}$, which implies $J^c=(J-1)^{\ss}$ by \eqref{Constants Eq Lem J*}. If moreover $J_{\rhobar}\neq\cJ$, then $J^*\subseteq\cJ$ is uniquely characterized by the property that $j\in J^*$ if and only if $j\notin J_{\rhobar}$ or ($j\in J_{\rhobar}$ and $j+1\notin J$).
\end{proof}

\begin{lemma}\label{Constants Lem compare sJ}
    For $J\subseteq\cJ$, we have
    \begin{enumerate}
        \item 
        $s^J_j=
    \begin{cases}
        s^{(J-1)^{\ss}}_j+\delta_{j\in J\Delta(J-1)^{\ss}}&\text{if}~j+1\notin J\Delta(J-1)^{\ss}\\
        p-2-s^{(J-1)^{\ss}}_j+\delta_{j\in J\Delta(J-1)^{\ss}}&\text{if}~j+1\in J\Delta(J-1)^{\ss};
    \end{cases}$
    \item 
    $s^J_j=
    \begin{cases}
        s^{J^{\ss}}_j+\delta_{j\in J^{\nss}}&\text{if}~j+1\notin J^{\nss}\\
        p-2-s^{J^{\ss}}_j+\delta_{j\in J^{\nss}}&\text{if}~j+1\in J^{\nss}.
    \end{cases}$
    \end{enumerate}
\end{lemma}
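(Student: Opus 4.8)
The plan is to prove both identities by a direct comparison of values in the defining formula~\eqref{Constants Eq sJ}, after isolating one structural observation: if $H\subseteq J_{\rhobar}$, then the last two lines of~\eqref{Constants Eq sJ} (which distinguish $j\in J_{\rhobar}$ from $j\notin J_{\rhobar}$ in the case $j,j+1\in H$) collapse to the single case $j\in J_{\rhobar}$, so that $s^{H}_j=r_j+\delta_{j\in H}$ when $j+1\notin H$ and $s^{H}_j=p-2-r_j-\delta_{j\in H}$ when $j+1\in H$. I would apply this with $H=J^{\ss}=J\cap J_{\rhobar}$ for part~(ii) and with $H=(J-1)^{\ss}$ for part~(i), using the elementary facts that $J=J^{\ss}\sqcup J^{\nss}$, that for $j\in J$ one has $j\in J_{\rhobar}$ exactly when $j\in J^{\ss}$, that $j\notin J_{\rhobar}$ for $j\in J^{\nss}$, and that $j\in(J-1)^{\ss}$ holds exactly when $j+1\in J$ and $j\in J_{\rhobar}$.

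For part~(ii) I would run over the nine configurations given by the memberships of $j$ and of $j+1$ in the disjoint pieces $J^{\ss}$, $J^{\nss}$ and $J^{c}=\cJ\setminus J$, read off $s^{J}_j$ from~\eqref{Constants Eq sJ} and $s^{J^{\ss}}_j$ from the collapsed form, and check that the two sides of~(ii) agree in each case; structurally, the indicator $\delta_{j\in J^{\nss}}$ records the contribution at the position $j$, while the involution $x\mapsto p-2-x$ is exactly the discrepancy produced by having $j+1\in J^{\nss}$ rather than $j+1\notin J^{\nss}$. For part~(i) I would first bring the right-hand side into the same shape: substituting the collapsed expression for $s^{(J-1)^{\ss}}_j$ into the two branches of~(i), expanding the four resulting sign combinations, and using that $\delta_{j+1\in J^{\delta}}\neq\delta_{j+1\in(J-1)^{\ss}}$ precisely when $j+1\in J$ (immediate from $J^{\delta}=J\Delta(J-1)^{\ss}$), one finds that the right-hand side of~(i) equals $r_j+\delta_{j\in(J-1)^{\ss}}+\delta_{j\in J^{\delta}}$ when $j+1\notin J$ and $p-2-r_j-\delta_{j\in(J-1)^{\ss}}+\delta_{j\in J^{\delta}}$ when $j+1\in J$. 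One then substitutes $\delta_{j\in(J-1)^{\ss}}=\delta_{j+1\in J}\,\delta_{j\in J_{\rhobar}}$ and $\delta_{j\in J^{\delta}}$ ($=1$ exactly when precisely one of $j\in J$ and $j\in(J-1)^{\ss}$ holds), and compares with~\eqref{Constants Eq sJ} in the handful of cases cut out by the values of $\delta_{j\in J}$, $\delta_{j+1\in J}$ and, when $j,j+1\in J$, of $\delta_{j\in J_{\rhobar}}$; as a sanity check, in the degenerate case $J^{\delta}=\cJ$ this yields $s^{(J-1)^{\ss}}_j=p-1-s^{J}_j$ for all $j$, consistently with Lemma~\ref{Constants Lem J*}.

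The computations are entirely routine, and the one point that will genuinely need care is the cancellation in part~(i): the data $j+2\in J$ and $j+1\in J_{\rhobar}$ enter the right-hand side only through the membership $j+1\in(J-1)^{\ss}$, hence occur both inside $s^{(J-1)^{\ss}}_j$ and inside $J^{\delta}$, and one must verify that these two occurrences cancel, leaving an expression in the data at $j$ and $j+1$ alone --- which is exactly what makes the comparison with $s^{J}_j$ possible. In the final write-up I would display the sign expansions and the small case tables explicitly, but keep the narrative organised around the collapsed form for $H\subseteq J_{\rhobar}$ and this cancellation.
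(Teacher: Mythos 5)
Your proposal is correct, and it is essentially the paper's argument: the paper proves this lemma by exactly the kind of case-by-case examination you describe (leaving it as an exercise and citing \cite[Lemma~D.1]{Wang2} for the second branch of (i)), and your collapsed formula $s^{H}_j=r_j+\delta_{j\in H}$ resp. $p-2-r_j-\delta_{j\in H}$ for $H\subseteq J_{\rhobar}$, together with the cancellation of the $j+1\in(J-1)^{\ss}$ dependence in part (i), is just a clean way of organizing that same check. All the intermediate identities you state (the reduction of the right-hand side of (i) to $r_j+\delta_{j\in(J-1)^{\ss}}+\delta_{j\in J^{\delta}}$ when $j+1\notin J$ and $p-2-r_j-\delta_{j\in(J-1)^{\ss}}+\delta_{j\in J^{\delta}}$ when $j+1\in J$, and the final case comparison with \eqref{Constants Eq sJ}) verify correctly.
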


\begin{proof}
    This follows from a case-by-case examination and is left as an exercise. We refer to \cite[Lemma~D.1]{Wang2} for the second part of (i).
\end{proof}

\begin{lemma}\label{Constants Lem t+t+s}
    For $J,J'\subseteq\cJ$, we have $(-1)^{\un{t}^J+\un{t}^{J^s}+\un{s}^{J'}}=1$.
\end{lemma}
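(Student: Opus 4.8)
The plan is to pass to exponents modulo $2$. Since $p\geq 5$ is odd, every power $p^j$ is odd, so for any $\un{m}\in\ZZ^f$ the integer $\un{m}=\sum_{j=0}^{f-1}m_jp^j$ satisfies $(-1)^{\un{m}}=(-1)^{\sum_{j}m_j}$. Applying this to $\un{m}=\un{t}^J+\un{t}^{J^s}+\un{s}^{J'}$, it suffices to show that $\sum_{j=0}^{f-1}\bigbra{t^J_j+t^{J^s}_j+s^{J'}_j}$ is even. By the definition $t^{J^s}_j=r_j-t^J_j$, we have $t^J_j+t^{J^s}_j=r_j$, so the dependence on $J$ drops out and it remains to prove that $\sum_{j=0}^{f-1}\bigbra{r_j+s^{J'}_j}$ is even --- a statement about $J'$ alone.

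Next I would determine the parity of $r_j+s^{J'}_j$ from the five cases defining $s^{J'}_j$ in \eqref{Constants Eq sJ}, using that $p$ and $p-2$ are odd while $p-1$ and $p-3$ are even. A direct inspection shows that $r_j+s^{J'}_j$ is odd precisely when exactly one of $j$ and $j+1$ lies in $J'$, i.e.\ precisely when $j\in J'\Delta(J'-1)$ (recall $j+1\in J'\iff j\in J'-1$). Therefore
\[
\sum_{j=0}^{f-1}\bigbra{r_j+s^{J'}_j}\equiv\babs{J'\Delta(J'-1)}\pmod{2}.
\]

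Finally, for any finite subsets of a common set one has $\babs{A\Delta B}\equiv\babs{A}+\babs{B}\pmod{2}$, and $\babs{J'-1}=\babs{J'}$ because $j\mapsto j-1$ is a bijection of $\cJ$; hence $\babs{J'\Delta(J'-1)}\equiv 2\babs{J'}\equiv 0\pmod{2}$, which yields $(-1)^{\un{t}^J+\un{t}^{J^s}+\un{s}^{J'}}=1$. The only real computation here is the five-case parity check in the second step, which is entirely mechanical; I do not expect any genuine obstacle.
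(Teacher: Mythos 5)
Your proof is correct and follows essentially the same route as the paper: reduce to the parity of $\sum_j\bigbra{r_j+s^{J'}_j}$ via $t^{J^s}_j=r_j-t^J_j$, check from \eqref{Constants Eq sJ} that the odd contributions occur exactly when precisely one of $j,j+1$ lies in $J'$, and note that this set of indices has even cardinality. The paper phrases the last step as $\babs{\partial(J')}=\babs{\partial\bra{(J')^c}}$, which is the same set you write as $J'\Delta(J'-1)$, so the difference is purely cosmetic.
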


\begin{proof}
    By definition we have $\un{t}^J+\un{t}^{J^s}=\un{r}$. By \eqref{Constants Eq sJ} we also have $s^{J'}_j\not\equiv r_j\,\mod\,2$ if and only if $j\in\partial(J')$ or $j\in\partial\bra{(J')^c}$, which proves the result since $\babs{\partial(J')}=\babs{\partial\bra{(J')^c}}$.
\end{proof}

\section{The relation between \texorpdfstring{$I_1$}{.}-invariants}\label{Constants Sec mu}

Let $\pi$ be as in \eqref{Constants Eq local factor}. We study the relation between $I_1$-invariants of $\pi$, which generalizes some results of \cite[\S5]{Wang2}. The main results are Proposition \ref{Constants Prop mu for Js} and Proposition \ref{Constants Prop ratio Js}.

From now on, we identify $\pi^{K_1}$ with $D_0(\rhobar)$ (see Introduction), which is the (finite-dimensional) representation of $\GL_2(\OK)$ defined in \cite[\S13]{BP12}. For each $J\subseteq\cJ$, the character $\chi_J$ appears in $\pi^{I_1}=D_0(\rhobar)^{I_1}$ with multiplicity one by \cite[Lemma 4.1(ii)]{Wang2}. We fix a choice of $0\neq v_J\in\pi^{I_1}$ with $I$-character $\chi_J$. For each $j\in\cJ$ we define
\begin{equation*}
    Y_j\eqdef\sum\limits_{a\in\Fq\x}a^{-p^j}\pmat{1&[a]\\0&1}\in\FF\ddbra{N_0}.
\end{equation*}
For $\un{i}=(i_0,\ldots,i_{f-1})\in\ZZ^f$, we write $\un{Y}^{\un{i}}$ for $\sprod_{j=0}^{f-1}Y_j^{i_j}$.

For $J,J'\subseteq\cJ$ such that $(J-1)^{\ss}=(J')^{\ss}$, we define $\mu_{J,J'}\in\FF\x$ as in \cite[(47)]{Wang2}. In particular, in the case $(J')^{\nss}\neq\cJ$ such that
\begin{equation}\label{Constants Eq condition simple mu}
    (J')^{\nss}\subseteq(J-1)^{\nss}\Delta(J'-1)^{\nss},
\end{equation}
the element $\mu_{J,J'}\in\FF\x$ is defined by the formula
\begin{equation}\label{Constants Eq muJJ'}
    \bbbra{\sprod_{j+1\in J\Delta J'}Y_j^{s^{J'}_j}\sprod_{j+1\notin J\Delta J'}Y_j^{p-1}}\smat{p&0\\0&1}\bbra{\un{Y}^{-\un{e}^{(J\cap J')^{\nss}}}v_J}=\mu_{J,J'}v_{J'},
\end{equation}
where $\un{Y}^{-\un{e}^{(J\cap J')^{\nss}}}v_J\in D_0(\rhobar)$ is a suitable shift of $v_J$ defined in \cite[Prop.~4.2]{Wang2}. Then for $J_1,J_2,J_3,J_4\subseteq\cJ$ such that $(J_1-1)^{\ss}=(J_2-1)^{\ss}=J_3^{\ss}=J_4^{\ss}$ we have $\mu_{J_1,J_3}/\mu_{J_1,J_4}=\mu_{J_2,J_3}/\mu_{J_2,J_4}$. In particular, for $J,J'$ such that $J^{\ss}=(J')^{\ss}$, the quantity $\mu_{J'',J}/\mu_{J'',J'}$ does not depend on $J''$ such that $(J''-1)^{\ss}=J^{\ss}$, and we denote it by $\mu_{*,J}/\mu_{*,J'}$.

\hspace{\fill}

For each $J\subseteq\cJ$, the character $\chi_J^s$ also appears in $\pi^{I_1}=D_0(\rhobar)^{I_1}$ with multiplicity one by \cite[Lemma 4.1(ii)]{Wang2}. We fix a choice of $0\neq v_{J^s}\in\pi^{I_1}$ with $I$-character $\chi_J^s$. Since $p$ acts trivially on $\pi$, by rescaling the vectors $v_J$ and $v_{J^s}$ we assume from now on that $v_{J^s}=\smat{0&1\\p&0}v_J$ for all $J\subseteq\cJ$. We prove some analogous results for the vectors $v_{J^s}$.

\begin{proposition}\label{Constants Prop mu for Js}
\begin{enumerate}
    \item 
    For $J\subseteq\cJ$, there exists a unique element $\mu_{J^s,J^{\ss}}\in\FF\x$ such that 
    \begin{equation}\label{Constants Eq muJsJss}
        \bbbra{\sprod_{j+1\notin J^{\nss}}Y_j^{s^{J^{\ss}}_j}\sprod_{j+1\in J^{\nss}}Y_j^{p-1}}\smat{p&0\\0&1}v_{J^s}=\mu_{J^s,J^{\ss}}v_{J^{\ss}}.
    \end{equation}
    \item 
    For $J\subseteq\cJ$ such that $J^{\nss}\neq\cJ$, there exists a unique element $\mu_{J^s,J}\in\FF\x$ such that
    \begin{equation*}
        \un{Y}^{\un{s}^J}\smat{p&0\\0&1}v_{J^s}=\mu_{J^s,J}v_{J}.
    \end{equation*}
\end{enumerate}
\end{proposition}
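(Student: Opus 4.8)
The plan is to mimic the construction of $\mu_{J,J'}$ from \cite{Wang2} (recalled around \eqref{Constants Eq muJJ'}), but applied to the vectors $v_{J^s}=\smat{0&1\\p&0}v_J$ rather than to the $v_J$ themselves. The existence of the scalars will, as in \loc, reduce to showing that the left-hand sides of \eqref{Constants Eq muJsJss} and of the formula in (ii) are nonzero elements of the one-dimensional $\FF$-space $\pi^{I_1}[\chi]$ for the appropriate $I$-character $\chi$; uniqueness is then automatic since $\chi_{J^{\ss}}$ (resp.\ $\chi_J$) occurs in $\pi^{I_1}=D_0(\rhobar)^{I_1}$ with multiplicity one by \cite[Lemma~4.1(ii)]{Wang2}.

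First I would verify the character bookkeeping. The operator $\smat{p&0\\0&1}$ conjugates $\chi_J^s$ into $\chi_J^s$ twisted appropriately, and then each $Y_j$ raises the $I$-character by the $j$-th fundamental character to the relevant power; one checks that the monomial $\sprod_{j+1\notin J^{\nss}}Y_j^{s^{J^{\ss}}_j}\sprod_{j+1\in J^{\nss}}Y_j^{p-1}$ (resp.\ $\un{Y}^{\un{s}^J}$) is exactly the one needed to land on $\chi_{J^{\ss}}$ (resp.\ $\chi_J$). This is the same combinatorics as in \cite[Prop.~4.2]{Wang2}, and the identity $v_{J^s}=\smat{0&1\\p&0}v_J$ together with $v_{(J^{\ss})^s}=\smat{0&1\\p&0}v_{J^{\ss}}$ lets me rewrite \eqref{Constants Eq muJsJss} as a statement about $v_J$ and $v_{J^{\ss}}$ after moving the antidiagonal matrix across; since $p$ acts trivially on $\pi$, the matrix $\smat{0&1\\p&0}$ squares to something central acting trivially, so these manipulations are reversible and translate the assertions directly into instances of the statements already established in \cite{Wang2} (the definition \eqref{Constants Eq muJJ'} and the shift operators $\un{Y}^{-\un{e}^{(\cdots)}}$). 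For part (ii) the restriction $J^{\nss}\neq\cJ$ is needed exactly so that the relevant shift $\un{Y}^{-\un{e}^{(J\cap J')^{\nss}}}$-type normalization makes sense and the target weight $\sigma_J$ is not the ``special'' one; this matches the parallel hypothesis $(J')^{\nss}\neq\cJ$ in the setup of $\mu_{J,J'}$.

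The nonvanishing is the substantive point. For (i), I would argue that $\smat{p&0\\0&1}v_{J^s}$ is a nonzero $I_1$-eigenvector (because $\smat{p&0\\0&1}$ normalizes $I_1$ and $v_{J^s}\neq0$), lying in the $N_0$-restriction of $\pi$, and then that the specific $\un{Y}$-monomial does not kill it — this is where one invokes the structure of $D_0(\rhobar)$ as an $I$-representation and the explicit description of how the $Y_j$ act on the relevant Jordan--Hölder constituents, exactly as in the proof of \cite[Prop.~4.2]{Wang2}. Concretely the monomial has been chosen so that after applying it one reaches the socle constituent $\sigma_{J^{\ss}}$, and the coefficient is a product of binomial-type factors that are nonzero under the genericity bounds $\max\{12,2f+1\}\le r_j\le p-\max\{15,2f+3\}$ in \eqref{Constants Eq rbarv}; I would cite those computations rather than redo them.

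The main obstacle I anticipate is not any single hard estimate but rather the careful translation: making sure that the normalization conventions for $v_{J^s}$ (fixed here by $v_{J^s}=\smat{0&1\\p&0}v_J$) are compatible with the shift operators $\un{Y}^{-\un{e}^{(\cdots)}}$ of \cite[Prop.~4.2]{Wang2}, so that the scalars $\mu_{J^s,J^{\ss}}$ and $\mu_{J^s,J}$ are genuinely well-defined (independent of the intermediate choices) and so that the later compatibility relations of the type $\mu_{J_1,J_3}/\mu_{J_1,J_4}=\mu_{J_2,J_3}/\mu_{J_2,J_4}$ can be quoted for them too. I would isolate this as a lemma checking that conjugation by $\smat{0&1\\p&0}$ intertwines the two families of relations, and then the two existence-and-uniqueness statements follow formally. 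I expect the write-up to be short, essentially reducing everything to \cite[Prop.~4.2]{Wang2} and \cite[Lemma~4.1(ii)]{Wang2}.
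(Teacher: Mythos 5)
There is a genuine gap, and it is exactly at the step you describe as a "formal translation". Writing $v_{J^s}=\smat{0&1\\p&0}v_J$ and trying to move the antidiagonal matrix across the $Y$-monomial does not reduce the statement to the relations of \cite{Wang2}: conjugating the operators $Y_j\in\FF\ddbra{N_0}$ (sums over the \emph{upper} unipotent subgroup $N_0$) by $\smat{0&1\\p&0}$ produces operators supported on the lower unipotent subgroup, so the resulting identity is not an instance of \eqref{Constants Eq muJJ'} or of \cite[Prop.~4.2]{Wang2}, which are statements about $\un{Y}$-monomials applied to $\smat{p&0\\0&1}(\un{Y}^{-\un{e}^{\cdots}}v_J)$. (Indeed $\smat{p&0\\0&1}v_{J^s}=\smat{0&1\\1&0}v_J$, and there is no established formula for $Y$-monomials applied to that vector.) Relatedly, your nonvanishing argument starts from the claim that $\smat{p&0\\0&1}$ normalizes $I_1$, which is false (conjugation destroys the condition on the lower-left entry), so $\smat{p&0\\0&1}v_{J^s}$ is not an $I_1$-eigenvector and its image under the monomial cannot be analyzed by simply citing the computations for $v_J$. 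Since the whole content of the proposition is precisely the analogue for $v_{J^s}$ of what \cite{Wang2} proves for $v_J$, "reducing everything to \cite[Prop.~4.2]{Wang2}" begs the question.

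The paper's actual mechanism, which is missing from your proposal, is Frobenius reciprocity: the vector $v_J=\smat{0&1\\p&0}v_{J^s}$ gives a $\GL_2(\OK)$-equivariant surjection $\alpha:\Ind_I^{\GL_2(\OK)}(\chi_J)\onto\bigang{\GL_2(\OK)v_J}$, and \cite[Prop.~4.2, Lemma~5.1, Lemma~3.2]{Wang2} identify this image with the explicit quotient $Q\bigbra{\chi^s_{J^s},(J^{\nss})^c-1}$ of the principal series, inside which the action of $\un{Y}$-monomials on $\smat{p&0\\0&1}\varphi^{\chi}$ is known explicitly. Part (i) then follows because the chosen monomial lands on the unique $H$-eigenvector of $\sigma_{J^{\ss}}$ killed by all $Y_j$ (hence an $I_1$-invariant), and this is nonzero by those explicit formulas together with Lemma \ref{Constants Lem compare sJ}(ii). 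For part (ii), the hypothesis $J^{\nss}\neq\cJ$ is not about making a shift normalization possible; it is used to see that $\alpha$ is not injective, hence kills the socle $\sigma_J^s$ of the principal series, and the resulting relation \cite[Lemma~3.2(iii)(a)]{Wang2} is what yields $\un{Y}^{\un{s}^J}\smat{p&0\\0&1}v_{J^s}=\mu_{J^s,J}v_J$ with the explicit nonzero constant \eqref{Constants Eq muJsJ}. Without this principal-series model (or some equivalent input), your outline provides no handle on the nonvanishing in (i) nor on the identity in (ii).
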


\begin{proof}
    (i). Since $v_J\in\pi^{I_1}$ has $I$-character $\chi_J$, by Frobenius reciprocity there is a $\GL_2(\OK)$-equivariant map
    \begin{equation}\label{Constants Eq Frob rec}
    \begin{aligned}
        \Ind_{I}^{\GL_2(\OK)}(\chi^s_{J^s})=\Ind_{I}^{\GL_2(\OK)}(\chi_{J})&\stackrel{\alpha}{\onto}\bigang{\!\GL_2(\OK)v_J}=\bigang{\!\GL_2(\OK)\smat{p&0\\0&1}v_{J^s}}\into\pi\\
        \phi&\mapsto v_J=\smat{0&1\\p&0}v_{J^s},
    \end{aligned}
    \end{equation}
    where $\phi\in\Ind_{I}^{\GL_2(\OK)}(\chi_{J})$ is supported on $I$ such that $\phi(\id)=1$. By \cite[Prop.~4.2]{Wang2}, the $\GL_2(\OK)$-subrepresentation $V\eqdef\bigang{\!\GL_2(\OK)v_J}\subseteq D_0(\rhobar)$ has constituents $\sigma_{\un{b}}$ with
    \begin{equation*}
    \begin{cases}
        b_j=\delta_{j\in J}(=\delta_{j\in J^{\ss}})&\text{if}~j\notin J^{\nss}\\
        b_j\in\set{0,(-1)^{\delta_{j+1\in J}}}&\text{if}~j\in J^{\nss}.
    \end{cases}
    \end{equation*}
    In particular, we have $V=I\bigbra{\sigma_{J^{\ss}},\sigma_{\un{c}}}$ in the notation of \cite[Lemma~5.1]{Wang2}(iii) with
    \begin{equation*}
        c_j\eqdef\delta_{j\in J^{\ss}}+\delta_{j\in J^{\nss}}(-1)^{\delta_{j+1\in J}}.
    \end{equation*} 
    Since $c_j=\delta_{j\in J^{\ss}}$ if and only if $j\notin J^{\nss}$, we deduce from \cite[Lemma~3.2(i)]{Wang2} (applied to $\lambda=\lambda_{J^s}$) that $V$ is isomorphic to the quotient $Q\bigbra{\chi^s_{J^s},(J^{\nss})^c-1}$ of $\Ind_{I}^{\GL_2(\OK)}(\chi^s_{J^s})$ in the notation of \cite[Lemma~3.2(iii)]{Wang2}. By Lemma \ref{Constants Lem compare sJ}(ii), for $j+1\notin J^{\nss}$ we have
    \begin{equation*}
        p-2-s^{J^s}_j+\delta_{j\in(J^{\nss})^c}=s^J_j-\delta_{j\in J^{\nss}}=s^{J^{\ss}}_j.
    \end{equation*}
    Then by \cite[Lemma~3.2(iii)]{Wang2} (applied to $\lambda=\lambda_{J^s}$), the LHS of \eqref{Constants Eq muJsJss} is nonzero in $\sigma_{J^{\ss}}$ and is the unique (up to scalar) $H$-eigencharacter in $\sigma_{J^{\ss}}$ killed by all $Y_j$. It follows that the LHS of \eqref{Constants Eq muJsJss} is a nonzero $I_1$-invariant of $\sigma_{J^{\ss}}$, hence is a scalar multiple of $v_{J^{\ss}}$.

    (ii). By the proof of (i) and using $J^{\nss}\neq\cJ$, the $\GL_2(\OK)$-equivariant surjection $\alpha$ in \eqref{Constants Eq Frob rec} is not an isomorphism, hence it maps $\sigma_{J}^s=\soc\bigbra{\Ind_{I}^{\GL_2(\OK)}(\chi_{J})}$ to zero. By \cite[Lemma~3.2(iii)(a)]{Wang2} (applied to $\lambda=\lambda_{J^s}$) we have
    \begin{equation*}
        \un{Y}^{\un{p}-\un{1}-\un{s}^{J^s}}\smat{p&0\\0&1}v_{J^s}+(-1)^{f-1+\un{s}^{J^s}+\un{t}^{J^s}}\bbra{\sprod_{j=0}^{f-1}\bra{s^{J^s}_j}!}v_{J^s}=0.
    \end{equation*}
    Since $\un{s}^{J^s}=\un{p}-\un{1}-\un{s}^J$, this proves (ii) with
    \begin{equation}\label{Constants Eq muJsJ}
        \mu_{J^s,J}=(-1)^{\un{1}+\un{s}^{J^s}+\un{t}^{J^s}}\bbra{\sprod_{j=0}^{f-1}\bra{p-1-s^{J}_j}!}\inv=(-1)^{\un{t}^{J^s}}\bbra{\sprod_{j=0}^{f-1}\bra{s^{J}_j}!},
    \end{equation}
    where the second equality uses
    \begin{equation}\label{Constants Eq factorial}
        \bigbra{\bra{p-1-r}!}\inv\equiv(-1)^{r+1}r!~\mod p~\forall\,0\leq r\leq p-1,
    \end{equation}
    and the uniqueness is clear.
\end{proof}

\begin{corollary}\label{Constants Cor muJ*}
    Suppose that $J_{\rhobar}\neq\cJ$ and $J_{\rhobar}\neq\emptyset$. Let $J^*$ be as in Lemma \ref{Constants Lem J*}. Then we have
    \begin{equation*}
        (-1)^{\abs{J^*\cap(J^*-1)^{\nss}}}\mu_{(J^*-1)^{\ss},J^*}\mu_{J^*,(J^*-1)^{\ss}}=1.
    \end{equation*}
\end{corollary}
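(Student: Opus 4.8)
**Proof proposal for Corollary \ref{Constants Cor muJ*}.**

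The plan is to combine the two formulas from Proposition \ref{Constants Prop mu for Js} applied with $J = J^*$, using the defining property of $J^*$ from Lemma \ref{Constants Lem J*}. First I would record what the hypotheses on $J^*$ give us: by Lemma \ref{Constants Lem J*}, the condition $J^* \Delta (J^*-1)^{\ss} = \cJ$ holds, equivalently $(J^*)^c = (J^*-1)^{\ss}$. In particular $(J^*-1)^{\ss} = (J^*)^c$, so $(J^*-1)^{\ss} \subseteq (J^*)^c$, which means $J^*$ and $(J^*-1)^{\ss}$ are disjoint; this will let me identify $(J^* \cap (J^*-1)^{\ss})$-type shift terms and simplify the indexing sets appearing in \eqref{Constants Eq muJsJss} and in the definition \eqref{Constants Eq muJJ'} of $\mu_{(J^*-1)^{\ss},J^*}$. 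Note also $(J^*)^{\nss} \neq \cJ$: indeed since $J_{\rhobar} \neq \emptyset$, there is some $j \in J_{\rhobar}$, and one checks from the explicit description of $J^*$ in the proof of Lemma \ref{Constants Lem J*} that such a $j$ forces (for a suitable index) membership in $J_{\rhobar}$, so $(J^*)^{\nss} = J^* \setminus J_{\rhobar} \neq \cJ$; this is what makes Proposition \ref{Constants Prop mu for Js}(ii) applicable to $J = J^*$.

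Next I would set up the composition. Write $v \eqdef v_{J^*} \in \pi^{I_1}$, $w \eqdef v_{(J^*-1)^{\ss}}$, and note $\smat{0&1\\p&0} v = v_{(J^*)^s}$ by our normalization. Applying Proposition \ref{Constants Prop mu for Js}(ii) with $J = J^*$ gives
\begin{equation*}
    \un{Y}^{\un{s}^{J^*}}\smat{p&0\\0&1}v_{(J^*)^s} = \mu_{(J^*)^s, J^*}\, v_{J^*}.
\end{equation*}
On the other hand, applying Proposition \ref{Constants Prop mu for Js}(i) with $J = J^*$ — here using the identification $(J^*)^{\ss} = J^* \cap J_{\rhobar}$ and the fact that, since $(J^*)^c = (J^*-1)^{\ss} \subseteq J_{\rhobar}$, one has $(J^*)^{\nss} = J^* \setminus J_{\rhobar}$, and crucially $\sigma_{(J^*)^{\ss}} = \sigma_{(J^*-1)^{\ss}}$ by Lemma \ref{Constants Lem J*} — expresses $\mu_{(J^*)^s, (J^*)^{\ss}}$, i.e. the coefficient when we apply the operator $\bigl[\sprod_{j+1\notin (J^*)^{\nss}} Y_j^{s^{(J^*)^{\ss}}_j} \sprod_{j+1 \in (J^*)^{\nss}} Y_j^{p-1}\bigr]\smat{p&0\\0&1}$ to $v_{(J^*)^s}$. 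The point is that this $\mu_{(J^*)^s, (J^*)^{\ss}}$ is exactly $\mu_{J^*, (J^*-1)^{\ss}}$ up to the sign in the statement: indeed by the compatibility of the $\mu$'s discussed after \eqref{Constants Eq muJJ'} (with $J^* $ playing the role of one of the $J_i$, using $((J^*)-1)^{\ss} = ((J^*-1)^{\ss})^{\ss}$ since $(J^*-1)^{\ss} \subseteq J_{\rhobar}$), together with the relation $v_{(J^*)^s} = \smat{0&1\\p&0} v_{J^*}$, the quantity $\mu_{(J^*)^s, (J^*)^{\ss}}$ computes the same scalar as $\mu_{J^*, (J^*-1)^{\ss}}$ with a correcting sign $(-1)^{\abs{J^* \cap (J^*-1)^{\nss}}}$ coming from the shift $\un{Y}^{-\un{e}^{(J \cap J')^{\nss}}}$ in \eqref{Constants Eq muJJ'}.

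The computation then reduces to: compose the two displayed operators. Going $v_{J^*} \xrightarrow{\smat{0&1\\p&0}} v_{(J^*)^s} \xrightarrow{\text{(i)}} v_{(J^*-1)^{\ss}} \xrightarrow{\smat{0&1\\p&0}} v_{((J^*-1)^{\ss})^s} \xrightarrow{\text{(ii)}} v_{(J^*-1)^{\ss}}$ — wait, more carefully: apply (i) to get from $v_{(J^*)^s}$ to a multiple $\mu_{J^*,(J^*-1)^{\ss}}$ (up to sign) of $v_{(J^*-1)^{\ss}}$, then apply $\smat{0&1\\p&0}$ and Proposition \ref{Constants Prop mu for Js}(ii) with $J = (J^*-1)^{\ss}$, whose $(\cdot)^{\nss}$ is empty so the hypothesis $J^{\nss} \neq \cJ$ is automatic, to return to a multiple $\mu_{((J^*-1)^{\ss})^s, (J^*-1)^{\ss}}$ of $v_{(J^*-1)^{\ss}}$ — and match exponents. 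Since $\sum_j s^{J^*}_j$-type exponents on the two sides are related by $s^{J}_j = p - 1 - s^{J^s}_j$ (the definition of $\lambda_{J^s}$) and the index sets $\{j+1 \notin (J^*)^{\nss}\}$ versus $\{j+1 \in (J^*)^{\nss}\}$ partition $\cJ$, the two $\un{Y}$-operators, composed appropriately with the $\smat{p&0\\0&1}$'s and $\smat{0&1\\p&0}$'s, reconstitute the identity on the line $\pi^{I_1}[\chi_{(J^*-1)^{\ss}}]$ (this is where one uses that $p$ acts trivially and that $\smat{0&1\\p&0}\smat{p&0\\0&1} = \smat{0&p\\p&0}$ acts as $p$). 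Reading off the scalar and collecting signs via Lemma \ref{Constants Lem t+t+s} and \eqref{Constants Eq factorial} yields $(-1)^{\abs{J^*\cap(J^*-1)^{\nss}}}\mu_{(J^*-1)^{\ss},J^*}\mu_{J^*,(J^*-1)^{\ss}} = 1$.

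The main obstacle I anticipate is the bookkeeping in the second paragraph: carefully justifying that the scalar named $\mu_{(J^*)^s,(J^*)^{\ss}}$ in Proposition \ref{Constants Prop mu for Js}(i) really equals $(-1)^{\abs{J^*\cap(J^*-1)^{\nss}}}$ times $\mu_{J^*,(J^*-1)^{\ss}}$ as defined by \eqref{Constants Eq muJJ'}, i.e. reconciling the operator $\bigl[\sprod Y_j^{s^{(J^*)^{\ss}}_j}\cdots\bigr]$ acting on $v_{(J^*)^s}$ with the operator $\bigl[\sprod_{j+1\in J\Delta J'}Y_j^{s^{J'}_j}\cdots\bigr]\smat{p&0\\0&1}(\un{Y}^{-\un{e}^{\cdots}}v_J)$ in the definition of $\mu$, including tracking the shift $\un{Y}^{-\un{e}^{(J\cap J')^{\nss}}}$, which is the source of the sign $(-1)^{\abs{J^*\cap(J^*-1)^{\nss}}}$. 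Everything else is a finite sign/factorial chase using Lemma \ref{Constants Lem compare sJ}, Lemma \ref{Constants Lem t+t+s}, \eqref{Constants Eq muJsJ} and the fact that the exponent sets are complementary in $\cJ$.
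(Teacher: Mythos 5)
There is a genuine gap, and it sits exactly where you flagged the ``main obstacle'': the identification in your second paragraph is false, and it rests on a misreading of Lemma \ref{Constants Lem J*}. That lemma says $\sigma_{(J^*)^{s}}=\sigma_{(J^*-1)^{\ss}}$ (the conjugate weight $\sigma_{J^*}^{s}$), not $\sigma_{(J^*)^{\ss}}=\sigma_{(J^*-1)^{\ss}}$, and these are genuinely different: since $(J^*-1)^{\ss}=(J^*)^c$ (which is nonempty when $J_{\rhobar}\neq\emptyset$) while $(J^*)^{\ss}\subseteq J^*$, the subsets $(J^*)^{\ss}$ and $(J^*-1)^{\ss}$ are disjoint and unequal. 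Hence the output of Proposition \ref{Constants Prop mu for Js}(i) applied to $J=J^*$ is a multiple of $v_{(J^*)^{\ss}}$, not of $v_{(J^*-1)^{\ss}}$, so your chain $v_{(J^*)^s}\xrightarrow{\text{(i)}}v_{(J^*-1)^{\ss}}$ breaks, and $\mu_{(J^*)^s,(J^*)^{\ss}}$ cannot equal $\mu_{J^*,(J^*-1)^{\ss}}$ up to a sign (the former even depends on the independently chosen vector $v_{(J^*)^{\ss}}$, which the latter does not). Your proposed source of the sign also evaporates: for the pair $(J,J')=(J^*,(J^*-1)^{\ss})$ one has $J\Delta J'=\cJ$ and $(J\cap J')^{\nss}=\emptyset$, so \eqref{Constants Eq muJJ'} reads $\un{Y}^{\un{s}^{(J^*-1)^{\ss}}}\smat{p&0\\0&1}v_{J^*}=\mu_{J^*,(J^*-1)^{\ss}}v_{(J^*-1)^{\ss}}$ with no shift at all, so no factor $(-1)^{\abs{J^*\cap(J^*-1)^{\nss}}}$ can come from there. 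Finally, the claim that the composed operators ``reconstitute the identity'' on the line $\pi^{I_1}[\chi_{(J^*-1)^{\ss}}]$ is an assertion, not an argument: the composite of two $\un{Y}$-operators interleaved with $\smat{p&0\\0&1}$ and $\smat{0&1\\p&0}$ is not the identity, and the scalar has to be computed.

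The correct route (the paper's) uses only part (ii) of Proposition \ref{Constants Prop mu for Js}, never part (i). Because $\sigma_{(J^*-1)^{\ss}}=\sigma_{J^*}^{s}$, and hence also $\sigma_{(J^*-1)^{\ss}}^{s}=\sigma_{J^*}$ and $\un{s}^{(J^*-1)^{\ss}}=\un{s}^{(J^*)^{s}}$, both constants in the statement are of the form $\mu_{J^s,J}$: namely $\mu_{(J^*-1)^{\ss},J^*}=\mu_{(J^*)^s,J^*}$ and $\mu_{J^*,(J^*-1)^{\ss}}=\mu_{((J^*-1)^{\ss})^s,(J^*-1)^{\ss}}$. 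The explicit formula \eqref{Constants Eq muJsJ} then gives, with $J=J^*$,
\begin{equation*}
    \mu_{(J^*-1)^{\ss},J^*}\,\mu_{J^*,(J^*-1)^{\ss}}=(-1)^{\un{t}^{J^s}+\un{t}^{J}}\sprod_{j=0}^{f-1}\bra{s^{J}_j}!\bra{s^{J^s}_j}!=(-1)^{\un{t}^{J^s}+\un{t}^{J}+\un{s}^{J}+\un{1}}=(-1)^f,
\end{equation*}
using \eqref{Constants Eq factorial} and Lemma \ref{Constants Lem t+t+s}. The remaining ingredient, which is missing from your write-up (you attributed the sign to the nonexistent shift instead), is the purely combinatorial congruence $\abs{J^*\cap(J^*-1)^{\nss}}\equiv f \pmod 2$, verified from the explicit description of $J^*$ in the proof of Lemma \ref{Constants Lem J*}; only this cancels the $(-1)^f$ and yields the statement.
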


\begin{proof}
    Note that $J_{\rhobar}\neq\emptyset$ implies $(J^*)^{\nss}\neq\cJ$. Since $\sigma_{(J^*-1)^{\ss}}=\sigma_{J^*}^s$ by Lemma \ref{Constants Lem J*}, we have $\un{s}^{(J-1)^{\ss}}=\un{s}^{J^s}$ and $\un{t}^{\bra{(J-1)^{\ss}}^s}=\un{t}^J$. Then by \eqref{Constants Eq muJsJ} we have
    \begin{equation*}
    \begin{aligned}
        &\mu_{(J^*-1)^{\ss},J^*}=\mu_{(J^*)^s,J^*}=(-1)^{\un{t}^{J^s}}\bbra{\sprod_{j=0}^{f-1}\bra{s^{J}_j}!};\\
        &\mu_{J^*,(J^*-1)^{\ss}}=\mu_{\bra{(J^*-1)^{\ss}}^s,(J^*-1)^{\ss}}=(-1)^{\un{t}^{J}}\bbra{\sprod_{j=0}^{f-1}\bra{s^{J^s}_j}!}.
    \end{aligned}
    \end{equation*}
    Hence we have
    \begin{equation*}
        \mu_{(J^*-1)^{\ss},J^*}\mu_{J^*,(J^*-1)^{\ss}}=(-1)^{\un{t}^{J^s}+\un{t}^J}\bbra{\sprod_{j=0}^{f-1}\bra{s^{J}_j}!\bra{s^{J^s}_j}!}=(-1)^{\un{t}^{J^s}+\un{t}^J+\un{s}^J+\un{1}}=(-1)^f,
    \end{equation*}
    where the second equality uses \eqref{Constants Eq factorial} and the last equality follows from Lemma \ref{Constants Lem t+t+s}. Moreover, from the structure of $J^*$ one can check that $|J^*\cap(J^*-1)^{\nss}|\equiv f~\mod2$, which completes the proof.
\end{proof}

\begin{lemma}\label{Constants Lem J0 Delta J}
    Let $J\subseteq\cJ$. We write $J_0\eqdef\bigbra{J^{\ss}\sqcup(J^c-1)^{\nss}}+1$ and $c'_j\eqdef2\delta_{j\in J_0\cap J^{\nss}}+p-1-s^{J_0}_j+\delta_{j\in J_0\Delta J}$ for all $j\in\cJ$. Then we have for all $j\in\cJ$
    \begin{enumerate}
        \item 
        $(-1)^{\delta_{j+1\notin J_0}}\bigbra{2\delta_{j\in J_0\cap J^{\nss}}+\delta_{j\in J^{\ss}}-\delta_{j\in J_0\Delta J^{\ss}}+\delta_{j\in J_0\Delta J}}=\delta_{j\in J^{\ss}}+\delta_{j\in J^{\nss}}(-1)^{\delta_{j+1\in J}}$;
        \item 
        $s^J_j+\delta_{j+1\notin J_0\Delta J}c'_j=\delta_{j+1\in J_0\Delta J}s^J_j+\delta_{j+1\notin J_0\Delta J}(p-1)$;
        \item 
        $\delta_{j+1\notin J^{\nss}}s^{J^{\ss}}_j+\delta_{j+1\in J^{\nss}}(p-1)+\delta_{j+1\notin J_0\Delta J}c'_j$\setlength{\parindent}
        {3cm}\par
        $=\delta_{j+1\in J_0\cap J^{\nss}}p+\delta_{j+1\in J_0\Delta J^{\ss}}s^{J^{\ss}}_j+\delta_{j+1\notin J_0\Delta J^{\ss}}(p-1)$.
    \end{enumerate}
\end{lemma}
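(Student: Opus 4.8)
The plan is to reduce all three identities to a short list of elementary combinatorial statements and verify those case by case, organized by the membership of $j$ and $j+1$ in $J$ and in $J_{\rhobar}$. The first ingredient is a pointwise description of $J_0$: since $J^{\ss}\subseteq J_{\rhobar}$ while $(J^c-1)^{\nss}$ is disjoint from $J_{\rhobar}$, for any index $k$ one has $k\in J^{\ss}\sqcup(J^c-1)^{\nss}$ if and only if $k\in J$ (when $k\in J_{\rhobar}$) and if and only if $k+1\notin J$ (when $k\notin J_{\rhobar}$); shifting by one, $\delta_{j+1\in J_0}=\delta_{j\in J}$ when $j\in J_{\rhobar}$ and $\delta_{j+1\in J_0}=1-\delta_{j+1\in J}$ when $j\notin J_{\rhobar}$ (with the index-shifted analogue for $\delta_{j\in J_0}$), and in particular $(J_0-1)^{\ss}=J^{\ss}$, which will let me apply Lemma \ref{Constants Lem compare sJ}(i) to the set $J_0$. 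The key consequence, used throughout (ii) and (iii): $j+1\notin J_0\Delta J$ can hold only when $j\in J_{\rhobar}$, and is then equivalent to $\delta_{j\in J}=\delta_{j+1\in J}$, so in that case $j\notin J^{\nss}$ automatically.

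For (i) I would first show the parenthesized expression on the left equals $\delta_{j\in J}$. From $\delta_{j\in J}-\delta_{j\in J^{\ss}}=\delta_{j\in J^{\nss}}$ one gets the pointwise identities $\delta_{j\in J_0\Delta J}-\delta_{j\in J_0\Delta J^{\ss}}=\delta_{j\in J^{\nss}}(1-2\delta_{j\in J_0})$ and $\delta_{j\in J_0\cap J^{\nss}}=\delta_{j\in J_0}\delta_{j\in J^{\nss}}$, and substituting these the expression collapses to $\delta_{j\in J^{\ss}}+\delta_{j\in J^{\nss}}$. It then remains to see that $(-1)^{\delta_{j+1\notin J_0}}\delta_{j\in J}=\delta_{j\in J^{\ss}}+\delta_{j\in J^{\nss}}(-1)^{\delta_{j+1\in J}}$, which is trivial for $j\notin J$ and, for $j\in J$, follows from the two cases $j\in J_{\rhobar}$ (both sides $1$, using $\delta_{j+1\in J_0}=\delta_{j\in J}=1$) and $j\notin J_{\rhobar}$ (both sides $(-1)^{\delta_{j+1\in J}}$, using $\delta_{j+1\notin J_0}=\delta_{j+1\in J}$).

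For (ii) and (iii) I would split on whether $j+1\in J_0\Delta J$. If $j+1\in J_0\Delta J$, then (ii) is the tautology $s^J_j=s^J_j$, while (iii) is a direct check after sub-splitting on $j+1\in J_{\rhobar}$ and using that exactly one of $j+1\in J_0$, $j+1\in J$ holds. If $j+1\notin J_0\Delta J$, then by the first paragraph $j\in J_{\rhobar}$, $\delta_{j\in J}=\delta_{j+1\in J}$ and $j\notin J^{\nss}$; here (ii) reduces to $s^{J_0}_j-s^J_j=\delta_{j\in J_0\Delta J}$, which is immediate from \eqref{Constants Eq sJ} since the formulas for $s^{J_0}_j$ and $s^J_j$ use the same $j+1$-slot and the same $J_{\rhobar}$-data and hence differ only through the $j$-slot, by exactly $\delta_{j\in J_0\Delta J}$. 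For (iii) in this case I would substitute the definition of $c'_j$, use Lemma \ref{Constants Lem compare sJ}(ii) to rewrite $s^J_j$ in terms of $s^{J^{\ss}}_j$ (keeping both branches $j+1\in J^{\nss}$ and $j+1\notin J^{\nss}$), and use \eqref{Constants Eq sJ}, or Lemma \ref{Constants Lem compare sJ}(i) for $J_0$ which is legitimate since $(J_0-1)^{\ss}=J^{\ss}$, for $s^{J_0}_j$; collecting terms, the two sides then match in each remaining sub-case, indexed by $j+1\in J_{\rhobar}$ and the common value $\delta_{j\in J}=\delta_{j+1\in J}$.

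The only real difficulty is the combinatorial bookkeeping: a priori the case tree involves the Boolean data $j,j+1\in J$, $j-1,j,j+1\in J_{\rhobar}$ and $j\in J_0$, and the point is that the hypotheses and the shape of $J_0$ eliminate almost all of it, especially the observation that $j+1\notin J_0\Delta J$ forces $j\in J_{\rhobar}$ and $j\notin J^{\nss}$. The one genuinely delicate point, which I expect to be the main pitfall, is that in (iii) one still has $s^J_j\neq s^{J^{\ss}}_j$ when $j+1\in J^{\nss}$ even though $j\notin J^{\nss}$ (by Lemma \ref{Constants Lem compare sJ}(ii)), so both branches of that lemma must be carried through and must not be collapsed prematurely.
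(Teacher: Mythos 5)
Your proposal is correct and follows essentially the same route as the paper: (i) by the same algebraic collapse of the bracketed expression to $\delta_{j\in J}$ followed by a case check on $j\in J_{\rhobar}$, (ii) by reducing, when $j+1\notin J_0\Delta J$, to the comparison $s^{J_0}_j-s^J_j=\delta_{j\in J_0\Delta J}$ via \eqref{Constants Eq sJ}, and (iii) by an indicator-by-indicator case verification using (ii) and Lemma \ref{Constants Lem compare sJ}(ii). The only difference is organizational — you split (iii) on $j+1\in J_0\Delta J$ (exploiting that $j+1\notin J_0\Delta J$ forces $j\in J_{\rhobar}$) whereas the paper partitions $\cJ$ into five pieces according to the position of $j+1$ — and both bookkeeping schemes check out.
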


\begin{proof}
    (i). We have
    \begin{equation*}
    \begin{aligned}
        &(-1)^{\delta_{j+1\notin J_0}}\bigbra{2\delta_{j\in J_0\cap J^{\nss}}+\delta_{j\in J^{\ss}}-\delta_{j\in J_0\Delta J^{\ss}}+\delta_{j\in J_0\Delta J}}\\
        &\hspace{1cm}=(-1)^{\delta_{j+1\notin J_0}}\big(2\delta_{j\in J_0}\delta_{j\in J^{\nss}}+\delta_{j\in J^{\ss}}\\
        &\hspace{3cm}-\bra{\delta_{j\in J_0}+\delta_{j\in J^{\ss}}-2\delta_{j\in J_0}\delta_{j\in J^{\ss}}}+\bra{\delta_{j\in J_0}+\delta_{j\in J}-2\delta_{j\in J_0}\delta_{j\in J}}\big)\\
        &\hspace{1cm}=(-1)^{\delta_{j\in(J^c)^{\ss}}+\delta_{j\in(J-1)^{\nss}}}\delta_{j\in J}=\delta_{j\in J^{\ss}}+\delta_{j\in J^{\nss}}(-1)^{\delta_{j+1\in J}},
    \end{aligned}
    \end{equation*}
    where the last equality is easy to check, separating the cases $j\in J^{\ss}$, $j\in J^{\nss}$, and $j\notin J$.
    
    (ii). It suffices to show that $s^{J_0}_j=s^J_j+2\delta_{j\in J_0\cap J^{\nss}}+\delta_{j\in J_0\Delta J}$
    for $j+1\notin J_0\Delta J$. This follows from a case-by-case examination similar to the proof of \cite[Lemma~D.1]{Wang2}.

    (iii). By the definition of $J_0$, we have that $j+1\notin J_0\Delta J$ implies $j\notin J^{\nss}$. Hence by (ii) and Lemma \ref{Constants Lem compare sJ}(ii) we have 
    \begin{equation*}
        \delta_{j+1\notin J_0\Delta J}c'_j=\delta_{j+1\notin J_0\Delta J}\bigbra{p-1-s^J_j}=
    \begin{cases}
        s^{J^{\ss}}_j+1&\text{if}~j+1\in J_0\cap J^{\nss}\\
        p-1-s^{J^{\ss}}_j&\text{if}~j+1\notin J_0\Delta J,~j+1\notin J_0\cap J^{\nss}.
    \end{cases}
    \end{equation*} 
    Consider the decomposition $\cJ=J_1\sqcup J_2\sqcup J_3\sqcup J_4\sqcup J_5$ with $J_1\eqdef \bra{J_0\setminus J}\sqcup\bra{J^{\ss}\setminus J_0}$, $J_2\eqdef J_0\cap J^{\ss}$, $J_3\eqdef J_0\cap J^{\nss}$, $J_4\eqdef J^{\nss}\setminus J_0$ and $J_5\eqdef (J_0)^c\cap J^c$. Then we can rewrite both sides of (iii) as
    \begin{equation*}
    \begin{aligned}
        \text{LHS}&=\bigbra{\delta_{j+1\in J_1}+\delta_{j+1\in J_2}+\delta_{j+1\in J_5}}s^{J^{\ss}}_j+\bigbra{\delta_{j+1\in J_3}+\delta_{j+1\in J_4}}(p-1)\\
        &\hspace{2cm}+\delta_{j+1\in J_3}\bigbra{s^{J^{\ss}}_j+1}+\bigbra{\delta_{j+1\in J_2}+\delta_{j+1\in J_5}}\bigbra{p-1-s^{J^{\ss}}_j};\\
        \text{RHS}&=\delta_{j+1\in J_3}p+\bigbra{\delta_{j+1\in J_1}+\delta_{j+1\in J_3}}s^{J^{\ss}}_j+\bigbra{\delta_{j+1\in J_2}+\delta_{j+1\in J_4}+\delta_{j+1\in J_5}}(p-1),
    \end{aligned}
    \end{equation*}
    from which it is easy to see that the equality holds.
\end{proof}

\begin{proposition}\label{Constants Prop ratio Js}
    For $J\subseteq\cJ$ such that $J^{\nss}\neq\cJ$, we have
    \begin{equation*}
        \frac{\mu_{J^s,J}}{\mu_{J^s,J^{\ss}}}=\frac{\mu_{*,J}}{\mu_{*,J^{\ss}}}.
    \end{equation*}
\end{proposition}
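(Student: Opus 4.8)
The plan is to prove the identity by relating both sides to a single computation involving the shift operator $\un{Y}$ applied to $v_{J^s}$, thereby reducing to the definitions of $\mu_{*,J}/\mu_{*,J^{\ss}}$ via \eqref{Constants Eq muJJ'} and of $\mu_{J^s,J},\mu_{J^s,J^{\ss}}$ via Proposition \ref{Constants Prop mu for Js}. Since by hypothesis $J^{\nss}\neq\cJ$, both $\mu_{J^s,J}$ and $\mu_{J^s,J^{\ss}}$ are defined. The key observation is that $J^{\ss}$ and $J$ have the same ``$\ss$-part'' (indeed $(J^{\ss})^{\ss}=J^{\ss}$), so that $\mu_{*,J}/\mu_{*,J^{\ss}}$ makes sense, and one should compute it via $\mu_{J_0,J}/\mu_{J_0,J^{\ss}}$ for a convenient auxiliary $J_0$ with $(J_0-1)^{\ss}=J^{\ss}$. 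The natural choice is exactly the set $J_0=\bigbra{J^{\ss}\sqcup(J^c-1)^{\nss}}+1$ introduced in Lemma \ref{Constants Lem J0 Delta J}, which is engineered so that $(J_0-1)^{\ss}=J^{\ss}$ (the $\nss$-part of $J_0-1$ is $(J^c-1)^{\nss}$, which is disjoint from $J_{\rhobar}$) and so that the combinatorial identities (i)--(iii) of that lemma hold.

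First I would verify that $J_0$ is an admissible index, i.e. $(J_0-1)^{\ss}=J^{\ss}$ and that the congruence \eqref{Constants Eq condition simple mu} holds for the pairs $(J_0,J)$ and $(J_0,J^{\ss})$, so that $\mu_{J_0,J}$ and $\mu_{J_0,J^{\ss}}$ are given by the explicit formula \eqref{Constants Eq muJJ'}. Then I would start from \eqref{Constants Eq muJJ'} for the pair $(J_0,J)$:
\begin{equation*}
    \bbbra{\sprod_{j+1\in J_0\Delta J}Y_j^{s^{J}_j}\sprod_{j+1\notin J_0\Delta J}Y_j^{p-1}}\smat{p&0\\0&1}\bbra{\un{Y}^{-\un{e}^{(J_0\cap J)^{\nss}}}v_{J_0}}=\mu_{J_0,J}v_{J},
\end{equation*}
and similarly for $(J_0,J^{\ss})$, and rewrite $v_{J_0}$ (or a shift of it) in terms of $v_{J^s}$ via \eqref{Constants Eq muJsJss} of Proposition \ref{Constants Prop mu for Js} — more precisely, express $\smat{p&0\\0&1}v_{J_0}$ by first applying $\smat{p&0\\0&1}$ to $v_{J^s}$ via part (i) of that Proposition (note $p^2$ acts trivially, so $\smat{p&0\\0&1}$ is an involution up to the identification $v_{J^s}=\smat{0&1\\p&0}v_J$), and then transporting the $Y_j$-monomials through. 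The parameter $c'_j$ in Lemma \ref{Constants Lem J0 Delta J} records precisely the exponents that appear when one does this bookkeeping: identity (ii) controls the composite exponent on the $J$-side, identity (iii) the composite exponent against $s^{J^{\ss}}$ on the $J^{\ss}$-side, and identity (i) matches the constituent structure of $\bigang{\GL_2(\OK)v_{J_0}}$ against that of $\bigang{\GL_2(\OK)v_J}$ as in the proof of Proposition \ref{Constants Prop mu for Js}(i).

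Taking the ratio of the two resulting scalar identities, all the common factors coming from $v_{J^s}$, the $\smat{p&0\\0&1}$-action, and the shift $\un{Y}$-monomials cancel, leaving exactly $\mu_{J^s,J}/\mu_{J^s,J^{\ss}}$ on one side and $\mu_{J_0,J}/\mu_{J_0,J^{\ss}}=\mu_{*,J}/\mu_{*,J^{\ss}}$ on the other. I expect the main obstacle to be precisely this cancellation bookkeeping: one must track the $Y_j$-exponents and the sign/factorial constants through $\smat{p&0\\0&1}\cdot\smat{p&0\\0&1}$, through the shifts $\un{Y}^{-\un{e}^{(J_0\cap J)^{\nss}}}$, and through the passage from $\Ind_I^{\GL_2(\OK)}(\chi_{J_0})$ to its relevant quotient, invoking \cite[Lemma~3.2]{Wang2} and \cite[Prop.~4.2]{Wang2} at each stage; the three identities of Lemma \ref{Constants Lem J0 Delta J} are exactly the bookkeeping lemmas that make this work, so the proof is mostly a matter of assembling them correctly, but getting the indices ($j$ versus $j+1$, $J_0\Delta J$ versus $J_0\Delta J^{\ss}$) right is delicate. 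A case-by-case check that $|J^*\cap(J^*-1)^{\nss}|$-type parities and the $(-1)^{\cdots}$ factors are consistent will likely be needed at the end, analogous to the sign analysis in Corollary \ref{Constants Cor muJ*}.
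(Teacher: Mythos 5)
You correctly identify the auxiliary index $J_0=\bigbra{J^{\ss}\sqcup(J^c-1)^{\nss}}+1$, the fact that $(J_0,J)$ and $(J_0,J^{\ss})$ satisfy \eqref{Constants Eq condition simple mu} so that $\mu_{*,J}/\mu_{*,J^{\ss}}=\mu_{J_0,J}/\mu_{J_0,J^{\ss}}$, and the role of Lemma \ref{Constants Lem J0 Delta J} as the exponent bookkeeping; this matches the paper's setup. But the core of your argument --- ``rewrite $v_{J_0}$ in terms of $v_{J^s}$ via \eqref{Constants Eq muJsJss}, then take the ratio of the two scalar identities and cancel the common factors'' --- is not a valid mechanism, and it is exactly where the actual proof lives. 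There is no formula expressing $v_{J_0}$ (or $\smat{p&0\\0&1}v_{J_0}$) in terms of $v_{J^s}$: Proposition \ref{Constants Prop mu for Js}(i) only produces $v_{J^{\ss}}$ after applying a $Y$-monomial, and the operators $Y_j$ are far from injective, so from $\un{Y}^{\un{s}^J}B_1=\mu_{J^s,J}v_J$ and $\un{Y}^{\un{s}^J}B_2=\mu_{J_0,J}v_J$ (where $B_1=\smat{p&0\\0&1}v_{J^s}$ and $B_2$ is the element built from $v_{J_0}$ in \eqref{Constants Eq muJJ'}) one cannot conclude anything about what a \emph{different} monomial does to $B_1$ versus $B_2$; in particular the two pairs of identities involve different monomials, so no formal ratio cancellation is available. (Also, $\smat{p&0\\0&1}$ is not an involution: $\smat{p&0\\0&1}^2=\smat{p^2&0\\0&1}$ is not scalar; what is true and used is only $\smat{p&0\\0&1}v_{J^s}=\smat{0&1\\1&0}v_J$ up to the trivial action of $p$.)

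The missing idea is a multiplicity-one transfer argument. The paper shows that $Y_{j'}B_1$ and $Y_{j'}B_2$ lie, for every $j'\in\cJ$, in the same subrepresentation $I\bigbra{\sigma_{J^{\ss}},\sigma_{\un{c}}}\cong Q\bigbra{\chi^s_{J^s},(J^{\nss})^c-1}$ of $D_0(\rhobar)$: for $B_1$ this comes from the proof of Proposition \ref{Constants Prop mu for Js}(i), while for $B_2$ it requires the nontrivial input \cite[Prop.~5.7]{Wang2} together with Lemma \ref{Constants Lem J0 Delta J}(i) (this is the real purpose of identity (i), not a ``constituent matching''). Since that subrepresentation is multiplicity free as an $H$-representation (here the hypothesis $J^{\nss}\neq\cJ$ is used) and $B_1,B_2$ have the same $H$-eigencharacter, one gets $Y_{j'}B_1=\bra{\mu_{J^s,J}/\mu_{J_0,J}}Y_{j'}B_2$ for all $j'$, the constant being pinned down by $\un{Y}^{\un{s}^J}$ via Lemma \ref{Constants Lem J0 Delta J}(ii); applying then the monomial of Proposition \ref{Constants Prop mu for Js}(i) and using Lemma \ref{Constants Lem J0 Delta J}(iii) and \cite[Lemma~3.1(i)]{Wang2} yields $\mu_{J^s,J^{\ss}}=\bra{\mu_{J^s,J}/\mu_{J_0,J}}\mu_{J_0,J^{\ss}}$, which is the proposition. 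Without this step your proposal does not close, and the obstacle is not ``delicate index bookkeeping'' but the justification that the same proportionality constant governs both the $v_J$- and the $v_{J^{\ss}}$-computations.
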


\begin{proof}
    Let $J_0\eqdef\bigbra{J^{\ss}\sqcup(J^c-1)^{\nss}}+1$. Then both the pairs $(J_0,J)$ and $(J_0,J^{\ss})$ satisfy \eqref{Constants Eq condition simple mu}. We consider the elements $B_1\eqdef\smat{p&0\\0&1}v_{J^s}\in\pi$ and
    \begin{equation*}
        B_2\eqdef\bbbra{\sprod_{j+1\notin J_0\Delta J}Y_j^{2\delta_{j\in J_0\cap J^{\nss}}+p-1-s^{J_0}_j+\delta_{j\in J_0\Delta J}}}\smat{p&0\\0&1}\bigbra{\un{Y}^{-\un{i}}v_{J_0}}\in\pi.
    \end{equation*}
    By Proposition \ref{Constants Prop mu for Js}(ii), we have $\un{Y}^{\un{s}^J}B_1=\mu_{J^s,J}v_J$. By Lemma \ref{Constants Lem J0 Delta J}(ii) and \eqref{Constants Eq muJJ'} applied to $(J_0,J)$, we also have
    \begin{equation*}
        \un{Y}^{\un{s}^J}B_2=\bbbra{\sprod_{j+1\in J_0\Delta J}Y_j^{s^{J'}_j}\sprod_{j+1\notin J_0\Delta J}Y_j^{p-1}}\smat{p&0\\0&1}\bbra{\un{Y}^{-\un{e}^{J_0\cap J^{\nss}}}v_J}=\mu_{J_0,J}v_J.
    \end{equation*}
    In particular, we deduce from \cite[Lemma~3.1(ii)]{Wang2} that $B_1$ and $B_2$ are $H$-eigenvectors with the same $H$-eigencharacter.
    
    Moreover, by the proof and the notation of Proposition \ref{Constants Prop mu for Js}(i), we have $B_1\in I\bigbra{\sigma_{J^{\ss}},\sigma_{\un{c}}}\cong Q\bigbra{\chi^s_{J^s},(J^{\nss})^c-1}$. By \cite[Prop.~5.7]{Wang2} applied to $\bra{J,J',\un{i}}=\bigbra{J_0,\bra{J_0\Delta J}-1,\un{e}^{J_0\cap J^{\nss}}}$ and using Lemma \ref{Constants Lem J0 Delta J}(i), we have $Y_{j'}B_2\in I\bigbra{\sigma_{J^{\ss}},\sigma_{\un{c}}}$ for all $j'\in\cJ$. Since $I\bigbra{\sigma_{J^{\ss}},\sigma_{\un{c}}}$ is multiplicity free as an $H$-representation by \cite[Prop.~Lemma~3.2(ii),(iii)]{Wang2} and using $J^{\nss}\neq\cJ$, we deduce that $Y_{j'}B_1=\bra{\mu_{J^s,J}/\mu_{J_0,J}}Y_{j'}B_2$ for all $j'\in\cJ$. Hence we have
    \begin{equation*}
    \begin{aligned}
        \mu_{J^s,J^{\ss}}v_{J^{\ss}}&=\bbbra{\sprod_{j+1\notin J^{\nss}}Y_j^{s^{J^{\ss}}_j}\sprod_{j+1\in J^{\nss}}Y_j^{p-1}}B_1\\
        &\hspace{-0.5cm}=\frac{\mu_{J^s,J}}{\mu_{J_0,J}}\bbbra{\sprod_{j+1\notin J^{\nss}}Y_j^{s^{J^{\ss}}_j}\sprod_{j+1\in J^{\nss}}Y_j^{p-1}}B_2\\
        &\hspace{-0.5cm}=\frac{\mu_{J^s,J}}{\mu_{J_0,J}}\bbbra{\sprod_{j+1\notin J_0\Delta J^{\ss}}Y_j^{s^{J^{\ss}}_j}\sprod_{j+1\in J_0\Delta J^{\ss}}Y_j^{p-1}}\un{Y}^{p\delta\bigbra{\un{e}^{J_0\cap J^{\nss}}}}\smat{p&0\\0&1}\bigbra{\un{Y}^{-\un{e}^{J_0\cap J^{\nss}}}v_{J_0}}\\
        &\hspace{-0.5cm}=\frac{\mu_{J^s,J}}{\mu_{J_0,J}}\bbbra{\sprod_{j+1\notin J_0\Delta J^{\ss}}Y_j^{s^{J^{\ss}}_j}\sprod_{j+1\in J_0\Delta J^{\ss}}Y_j^{p-1}}\smat{p&0\\0&1}v_{J_0}=\frac{\mu_{J^s,J}}{\mu_{J_0,J}}\mu_{J_0,J^{\ss}}v_{J^{\ss}},
    \end{aligned}
    \end{equation*}
    where the first equality follows from Proposition \ref{Constants Prop mu for Js}(i), the third equality follows from Lemma \ref{Constants Lem J0 Delta J}(iii), the fourth equality follows from \cite[Lemma~3.1(i)]{Wang2}, and the last equality follows from \eqref{Constants Eq muJJ'} applied to $(J_0,J^{\ss})$. Therefore, we have $\mu_{J^s,J^{\ss}}=\bra{\mu_{J^s,J}/\mu_{J_0,J}}\mu_{J_0,J^{\ss}}$, which completes the proof.
\end{proof}

\section{Kisin modules}\label{Constants Sec Kisin}

Let $\rhobar$ be as in \eqref{Constants Eq rhobar}. In particular, $\rhobar$ is generic in the sense of \cite[\S3.2.2]{DL21}. We study the Kisin modules following \cite{DL21} that can be used to describe the tamely potentially Barsotti--Tate deformation rings of $\rhobar$. When $\rhobar$ is non-semisimple, we define and compute the elements in the Kisin modules that form one part of the computation of the constants in the diagram $(\pi^{I_1}\into\pi^{K_1})$. The main result is Proposition \ref{Constants Prop UpJ}.

Up to enlarging $\FF$, we fix an $f$-th root $\beta\eqdef\sqrt[f]{\xi}\in\FF\x$ of $\xi$ (see \eqref{Constants Eq rhobar}). We let $T_K$ be the Lubin--Tate variable as in \cite[\S2]{Wang3}. By \cite[(44)]{Wang3}, the Lubin--Tate $(\varphi,\OK\x)$-module $D_K(\rhobar)$ associated to $\rhobar$ has the following form ($a\in\OK\x$):

\begin{equation*}
\left\{\begin{array}{cll}
    D_K(\rhobar)&=&\prod\nolimits_{j=0}^{f-1}D_{K,\sigma_j}(\rhobar)=\prod\nolimits_{j=0}^{f-1}\bbra{\FF\dbra{T_K}e_0^{(j)}\oplus\FF\dbra{T_K}e_1^{(j)}}\\
    \varphi\bra{e_0^{(j+1)}\ e_1^{(j+1)}}&=&(e_0^{(j)}\ e_1^{(j)})\Mat(\varphi^{(j)})\\
    a(e_0^{(j)}\ e_1^{(j)})&=&(e_0^{(j)}\ e_1^{(j)})\Mat(a^{(j)}),
\end{array}\right.
\end{equation*}
where 
\begin{equation}\label{Constants Eq LT dj}
    \Mat(\varphi^{(j)})=\pmat{\beta\,T_K^{-(q-1)(r_j+1)}&\beta\inv d_j\\0&\beta\inv}
\end{equation}
for some $d_j\in\FF$, and $\Mat(a^{(j)})\in I_2+\M_2\bigbra{T_K^{q-1}\FF\ddbra{T_K^{q-1}}}$ which uniquely determines $\Mat(a^{(j)})$. 

By \cite[Lemma~5.1]{Wang3}, the Fontaine--Laffaille module $\op{FL}(\rhobar)$ associated to $\rhobar$ (see \cite{FL82}) has the following form:
\begin{equation}\label{Constants Eq FL}
\left\{\begin{array}{cll}
    \op{FL}(\rhobar)&=&\prod\nolimits_{j=0}^{f-1}\op{FL}_{\sigma_j}(\rhobar)=\prod\nolimits_{j=0}^{f-1}\bbra{\FF e_0^{(j)}\oplus\FF e_1^{(j)}}\\
    \Fil^{k}\op{FL}_{\sigma_j}(\rhobar)&=&\FF e_0^{(j)}~\text{exactly for}~1\leq k\leq r_j+1\\
    \varphi_{r_{j+1}+1}(e_0^{(j+1)})&=&\beta\inv(e_0^{(j)}-d_{j+1}e_1^{(j)})\\
    \varphi(e_1^{(j+1)})&=&\beta\,e_1^{(j)},
\end{array}\right.
\end{equation}
where $d_j\in\FF$ is as in \eqref{Constants Eq LT dj}. In particular, by \cite[(18)]{Bre14} with $e^j=e_1^{(f-j)}$, $f^j=e_0^{(f-j)}$, $\alpha_j=\beta$, $\beta_j=\beta\inv$ and $\mu_j=d_{f+1-j}$ for all $j\in\cJ$ in \cite[(16)]{Bre14}, we deduce that $d_j=0$ if and only if $j\in J_{\rhobar}$.

We fix a compatible system $(p_n)_n$ of $p$-power roots of $(-p)$ in $\Qpbar$ and define $K_{\infty}\eqdef\bigcup_{n\geq0}K(p_n)$. Let $\ovl{\cM}$ be the \'etale $\varphi$-module over $k\dbra{v}\otimes_{\Fp}\FF$ in the sense of \cite[\S3.2.1]{DL21} such that $\VV^*(\ovl{\cM})\cong\rhobar|_{G_{K_{\infty}}}$, where $\VV^*$ is Fontaine's anti-equivalence of categories (see \cite{Fon90}). Then as in the proof of \cite[Prop.~3.3]{DL21} and using \eqref{Constants Eq FL}, we can take

\begin{equation*}
\left\{\begin{array}{cll}
    \ovl{\cM}&=&\prod_{j=0}^{f-1}\ovl{\cM}^{(j)}=\prod_{j=0}^{f-1}\bbra{\FF\dbra{v}e_0\j\oplus\FF\dbra{v}e_1\j}\\
    \varphi\bra{e_0^{(j)}\ e_1^{(j)}}&=&(e_0^{(j+1)}\ e_1^{(j+1)})\Mat(\varphi_{\ovl{\cM}}^{(j)})
\end{array}\right.
\end{equation*}
with
\begin{equation}\label{Constants Eq etale phi}
    \Mat(\varphi_{\ovl{\cM}}\jj)=\pmat{\beta\inv&0\\0&\beta}\pmat{v^{r_j+1}&0\\-\beta^{-2}d_jv^{r_j+1}&1},
\end{equation}
where $d_j\in\FF$ is as in \eqref{Constants Eq LT dj}.

\hspace{\fill}

We write $\un{W}\eqdef(S_2)^f$ and $X^*(\un{T})\eqdef(\ZZ^2)^f\cong\ZZ^{2f}$. For $w\in\un{W}$ and $\lambda\in X^*(\un{T})$, we write $w_j\in S_2$ and $\lambda_j\in\ZZ^2$ the corresponding $j$-th components and define $w\lambda\in X^*(\un{T})$ with $(w\lambda)_j=w_j(\lambda_j)$. We write $\tau(w,\lambda):I_K\to\GL_2(\cO)\subseteq\GL_2(E)$ the associated tame inertial type defined in \cite[\S2.3.2]{DL21}, which is a $2$-dimensional representation of $I_K$ over $E$ that factors through the tame inertial quotient and extends to $G_K$. If moreover $(w,\mu)$ is a good pair (see \cite[\S2.3.2]{DL21}), which will always be the case in this article, we write $R_w(\lambda)$ the associated tame type of $K$ defined in \cite[\S2.3.1]{DL21}, which is a smooth irreducible representation of $\GL_2(\Fq)$ over $E$. 

For $\chi:I\to\FF\x$ a character, we write $\theta^{\circ}(\chi)\eqdef\Ind_{I}^{\GL_2(\OK)}\bra{[\chi]}$, which is an $\cO$-lattice in the principal series type $\theta(\chi)\eqdef\theta^{\circ}(\chi)[1/p]$, where $[\chi]:I\to\cO\x$ is the Techm\"uller lift of $\chi$. We let $\varphi^{\chi}\in\theta^{\circ}(\chi)$ be the unique element supported on $I$ such that $\varphi^{\chi}(\id)=1$. If moreover $\chi\neq\chi^s$, we write $\sigma(\chi)$ for the unique Serre weight such that $\chi$ is the $I$-character acting on $\sigma(\chi)^{I_1}$. Then the cosocle of $\theta\cc(\chi)$ is isomorphic to $\sigma(\chi)$, and we denote the image of $\varphi^{\chi}$ in $\sigma(\chi)$ by $\varphi^{\chi}$ as well.

For $R$ an $\cO$-algebra and $\tau$ a $2$-dimensional tame inertial type, we define a \textbf{Kisin module} over $R$ of type $\tau$ and its eigenbasis as in \cite[\S3.1]{DL21}, with the caveat that we only consider modules of rank $2$. For each Kisin module $\fM$ with a fixed eigenbasis, we define the matrices $A^{(j)}\in\M_2(R\ddbra{v})$ for $0\leq j\leq f-1$ as in \cite[\S3.4]{DL21}.

From now on, we write $\mu\eqdef\bra{\un{r}+\un{1},\un{0}}\in X^*\bra{\un{T}}$ and $\eta\eqdef\bra{\un{1},\un{0}}\in X^*\bra{\un{T}}$. We let $w,w'\in\un{W}$ such that (see \cite[Prop.~3.11]{DL21})
\begin{enumerate}
    \item 
    $(w_j,w_j')\neq(\fw,\id)$ for all $j\in\cJ$;
    \item 
    if $(w_j,w_j')=(\id,\fw)$, then $j\in J_{\rhobar}$ (or equivalently, $d_j=0$),
\end{enumerate}
where $\fw$ is the unique non-trivial element in $S_2$. As in \cite[\S3.5]{DL21}, we consider the tame inertial type $\tau=\tau(w,\mu-w'\eta)$. We let $\ovl{\fM}^{\tau}$ be the Kisin module over $\FF$ of type $\tau$ given by the matrices $\ovl{A}^{(f-1-j)}=\Mat(\phi_{\ovl{\cM}}\jj)v^{-\bra{\mu_j-w'_j\eta_j}}\dot{w}_j\in\M_2\bra{\FF\ddbra{v}}$ for $j\in\cJ$, where $v^{(a,b)}\in\M_2\!\bigbra{\FF\dbra{v}}$ denotes the diagonal matrix $\smat{v^a&0\\0&v^b}$ for $(a,b)\in\ZZ^2$, and $\dot{w}_j\in\M_2(\FF)$ denotes the corresponding permutation matrix associated to $w_j$. 
Then by \cite[Prop.~3.2.1]{LLHLM20}, the \'etale $\varphi$-module over $k\dbra{v}\otimes_{\Fp}\FF$ associated to $\ovl{\fM}^{\tau}$ in the sense of \cite[\S3.2]{LLHLM20} is isomorphic to $\ovl{\cM}$. Concretely, from \eqref{Constants Eq etale phi} we have (compare with \cite[(14)]{DL21})
\begin{equation*}
    \ovl{A}^{(f-1-j)}=
\begin{cases}
    \pmat{\beta\inv&0\\0&\beta}\pmat{v&0\\-\beta^{-2}d_jv&1}&\text{if}~(w_j,w'_j)=(\id,\id)\\
    \pmat{\beta\inv&0\\0&\beta}\pmat{1&0\\0&v}&\text{if}~(w_j,w'_j)=(\id,\fw)\\
    \pmat{\beta\inv&0\\0&\beta}\pmat{0&1\\v&-\beta^{-2}d_j}&\text{if}~(w_j,w'_j)=(\fw,\fw).
\end{cases}
\end{equation*}

Let $R_0\eqdef\cO\ddbra{X_j,Y_j,Z_j,Z'_j}_{j=0}^{f-1}/\bra{f_j}_{j=0}^{f-1}$ with $f_j=Y_j$ if $j\notin J_{\rhobar}$ and $f_j=X_jY_j-p$ if $j\in J_{\rhobar}$. For $\tau=\tau(w,\mu-w'\eta)$ as above, we let $\fM^{\tau}$ be the Kisin module over $R_0$ given by the matrices $A^{(f-1-j)}\in\M_2\bra{R_0\ddbra{v}}$ for $j\in\cJ$ such that (compare with \cite[(15)]{DL21})
\begin{equation}\label{Constants Eq Af-1-j}
    A^{(f-1-j)}=D^{(f-1-j)}A^{\prime(f-1-j)}
\end{equation}
with
\begin{equation}\label{Constants Eq DA'f-1-j}
\begin{aligned}
    D^{(f-1-j)}&\eqdef\pmat{Z'_j+[\beta]\inv&0\\0&Z_j+[\beta]};\\
    A^{\prime(f-1-j)}&\eqdef
\begin{cases}
    \pmat{v+p&0\\\bigbra{X_j-[\beta^{-2}d_j]}v&1}&\text{if}~(w_j,w'_j)=(\id,\id)\\
    \pmat{1&-Y_j\\0&v+p}&\text{if}~(w_j,w'_j)=(\id,\fw)\\
    \pmat{-Y_j&1\\v&X_j}&\text{if}~(w_j,w'_j)=(\fw,\fw)~\text{and}~j\in J_{\rhobar}\\
    \pmat{-p\bigbra{X_j-[\beta^{-2}d_j]}\inv&1\\v&X_j-[\beta^{-2}d_j]}&\text{if}~(w_j,w'_j)=(\fw,\fw)~\text{and}~j\notin J_{\rhobar}.\\
\end{cases}
\end{aligned}
\end{equation}
In particular, $\fM^{\tau}$ is a deformation of $\ovl{\fM}^{\tau}$ to $R_0$. 

\hspace{\fill}

For each $J\subseteq\cJ$, we let $s^*,w'\in\un{W}$ be characterized by $\chi_J=\chi_{(s^*)\inv\bra{\mu-w'\eta}}$ (see \cite[Lemma~3.13]{DL21}). We also let $w\in\un{W}$ such that $w_j=s^*_js^*_{j-1}$. In particular, we have an isomorphism of tame types $\theta(\chi_J)\cong R_w\bra{\mu-w'\eta}$. Then we define $U_p(\chi_J)\eqdef\prod_{j=0}^{f-1}U_p(\chi_J)_j\in R_0$ with (compare with \cite[Prop.~3.22]{DL21})
\begin{equation}\label{Constants Eq UpchiJ}
    U_p(\chi_J)_j\eqdef\bbra{p\bbra{A^{(f-1-j)}_{s^*_{j-1}(1)s^*_{j-1}(1)}}\inv\mod v}\in R_0,
\end{equation}
where $A^{(f-1-j)}_{s^*_{j-1}(1)s^*_{j-1}(1)}$ is the $\bra{s^*_{j-1}(1),s^*_{j-1}(1)}$-th entry of the matrix $A^{(f-1-j)}$ in \eqref{Constants Eq Af-1-j} for the Kisin module $\fM^{\tau}$ over $R_0$ of type $\tau=\tau(w,\mu-w'\eta)$.

\begin{lemma}\label{Constants Lem s*ww'}
    Let $J\subseteq\cJ$ and $s^*,w,w'\in\un{W}$ be as above. Then for each $j\in\cJ$ we have
    \begin{equation}\label{Constants Eq Lem s*ww' statement}
    \begin{aligned}
        s^*_{j-1}=\id&\iff j\notin J\\
        w_j=\id&\iff j\notin J,~j+1\notin J~\text{or}~j\in J,~j+1\in J\\
        w'_j=\id&\iff j\notin J,~j+1\notin J~\text{or}~j\in J,~j+1\in J,~j\notin J_{\rhobar}.
    \end{aligned}
    \end{equation}
\end{lemma}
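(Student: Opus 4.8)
The plan is to read off $s^*$ and $w'$ directly from the equality $\chi_J=\chi_{(s^*)^{-1}(\mu-w'\eta)}$ together with the explicit shape of $\chi_J=\chi_{\lambda_J}$, where $\lambda_J=(\un{s}^J+\un{t}^J,\un{t}^J)$ is given by \eqref{Constants Eq sJ}--\eqref{Constants Eq tJ}. By \cite[Lemma~3.13]{DL21} the pair $(s^*,w')\in\un{W}\times\un{W}$ with this property is uniquely determined, so it suffices to check that the values listed in \eqref{Constants Eq Lem s*ww' statement} do produce the character $\chi_J$. Granting the first line of \eqref{Constants Eq Lem s*ww' statement}, the formula for $w$ in the second line is then purely formal: in $S_2$ one has $w_j=s^*_js^*_{j-1}=\id$ exactly when $s^*_j=s^*_{j-1}$, and applying the first line at the indices $j$ and $j+1$ shows that this holds precisely when $j$ and $j+1$ both lie in $J$ or both lie outside $J$. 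So the real content is the first and third lines, which I would prove simultaneously.

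For the verification I would expand $\mu-w'\eta$ block by block: its $j$-th $\GL_2$-block is $(r_j,0)$ when $w'_j=\id$ and $(r_j+1,-1)$ when $w'_j=\fw$, using $\mu_j=(r_j+1,0)$ and $\eta_j=(1,0)$. Applying $(s^*)^{-1}$ --- which permutes the coordinates inside the blocks according to the recipe of \cite[\S2.3]{DL21}, with the Frobenius shift that is already visible in the relation $w_j=s^*_js^*_{j-1}$ --- produces a candidate weight $\lambda$, and one is left to match $\lambda$ with $\lambda_J$ up to the equivalence defining equality of the associated $I$-characters (equality of $\sum_j\lambda_{1,j}p^j$ and of $\sum_j\lambda_{2,j}p^j$ modulo $q-1$). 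The point to keep in mind is that this equivalence permits ``carries'' between consecutive blocks, and these carries are exactly what creates the index shift $j\mapsto j-1$ in the first line of \eqref{Constants Eq Lem s*ww' statement} and what turns the naive entries into the $r_j+1$, $p-2-r_j$, $p-1-r_j$, $p-3-r_j$ appearing in \eqref{Constants Eq sJ}. Running through the five cases cut out by the values of $\delta_{j\in J}$, $\delta_{j+1\in J}$ and $\delta_{j\in J_{\rhobar}}$, and reading off $t^J_j$ from \eqref{Constants Eq tJ}, pins down $s^*_{j-1}$ and $w'_j$ as claimed; along the way one checks that $(w_j,w'_j)=(\id,\fw)$ forces $j\in J_{\rhobar}$, consistently with the condition imposed earlier on $(w,w')$.

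I expect the only real obstacle to be bookkeeping: keeping straight simultaneously the shift between the labelling of $s^*$ and the subset $J$, the carry between adjacent $\GL_2$-blocks coming from the congruence modulo $q-1$, and the normalizations built into the definitions of $\tau(w,\mu-w'\eta)$ and of $\chi_\lambda$ in \cite{DL21}. There is no conceptual difficulty here; once the conventions are fixed, the whole statement is a finite check against \cite[Lemma~3.13]{DL21}, \eqref{Constants Eq sJ} and \eqref{Constants Eq tJ}, and the case $j\in J$, $j+1\in J$ (which subdivides according to whether $j\in J_{\rhobar}$) is the one requiring the most care.
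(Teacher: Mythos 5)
Your proposal follows essentially the same route as the paper: one checks directly, case by case in $\bigl(\delta_{j\in J},\delta_{j+1\in J},\delta_{j\in J_{\rhobar}}\bigr)$, that the prescribed $(s^*,w')$ satisfies $\chi_J=\chi_{(s^*)\inv(\mu-w'\eta)}$ (the paper records this as $\lambda_J-(s^*)\inv(\mu-w'\eta)=\bra{p\un{e}^{J-1}-\un{e}^{J},\un{0}}$, which is exactly your ``carry'' killed modulo $q-1$), and deduces the $w_j$ statement formally from $w_j=s^*_js^*_{j-1}$. The outline is correct; what remains is only the finite bookkeeping you already identify.
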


\begin{proof}
    It suffices to show that $\chi_J=\chi_{(s^*)\inv\bra{\mu-w'\eta}}$ for $s^*,w'\in\un{W}$ as in \eqref{Constants Eq Lem s*ww' statement}. Then the statement for $w_j$ follows from $w_j=s^*_js^*_{j-1}$. 
    
    Recall from \S\ref{Constants Sec sw} that $\chi_J=\chi_{\lambda_J}$ with $\lambda_J=(\un{s}^J+\un{t}^J,\un{t}^J)\in X^*(\un{T})$. Concretely, by \eqref{Constants Eq sJ} and \eqref{Constants Eq tJ} we have
    \begin{equation}\label{Constants Eq Lem s*ww' 1}
        \bra{\lambda_J}_j=
    \begin{cases}
        \bra{r_j,0}&\text{if}~j\notin J,~j+1\notin J\\
        \bra{r_j,-1}&\text{if}~j\in J,~j+1\notin J\\
        \bra{p-1,r_j+1}&\text{if}~j\notin J,~j+1\in J\\
        \bra{p-2,r_j+1}&\text{if}~j\in J,~j+1\in J,~j\in J_{\rhobar}\\
        \bra{p-1,r_j}&\text{if}~j\in J,~j+1\in J,~j\notin J_{\rhobar}.
    \end{cases}
    \end{equation}
    By \eqref{Constants Eq Lem s*ww' statement} we also have
    \begin{equation}\label{Constants Eq Lem s*ww' 2}
    \begin{aligned}
        &\bigbra{(s^*)\inv\bra{\mu-w'\eta}}_j=\\
    &\hspace{1cm}\left\{\begin{array}{lll}
        \bra{r_j,0} & \text{if}~s^*_j=\id,~w'_j=\id, & \text{equivalently},~j\notin J,~j+1\notin J\\
        \bra{r_j+1,-1} & \text{if}~s^*_j=\id,~w'_j=\fw, & \text{equivalently},~j\in J,~j+1\notin J\\
        \bra{-1,r_j+1} & \text{if}~s^*_j=\fw,~w'_j=\fw, & \text{equivalently},~
    \begin{aligned}
        &j\notin J,~j+1\notin J\\
        &\text{or}~j\in J,~j+1\in J,~j\in J_{\rhobar}
    \end{aligned}\\
        \bra{0,r_j} & \text{if}~s^*_j=\fw,~w'_j=\id, & \text{equivalently},~j\in J,~j+1\in J,~j\notin J_{\rhobar}.
    \end{array}\right.
    \end{aligned}
    \end{equation}
    Combining \eqref{Constants Eq Lem s*ww' 1} and \eqref{Constants Eq Lem s*ww' 2} we have 
    \begin{equation*}
        \lambda_J-(s^*)\inv\bra{\mu-w'\eta}=\bra{p\un{e}^{J-1}-\un{e}^{J},\un{0}}\quad\text{in}~X^*(\un{T}).
    \end{equation*}
    Since $x^{p\un{e}^{J-1}-\un{e}^{J}}=1$ for all $x\in\Fq\x$, we deduce the equality $\chi_J=\chi_{(s^*)\inv\bra{\mu-w'\eta}}$.
\end{proof}

\begin{lemma}\label{Constants Lem Up'}
    Let $J\subseteq\cJ$. Then $U_p(\chi_J)$ is a product of a $1$-unit of $R_0$, an integer power of $p$, the scalar $[\beta]^{|J^c|-|J|}$, and the quantity $U_p(\chi_J)'\eqdef\sprod_{i=0}^{f-1}U_p(\chi_J)'_j\in R_0$ with
    \begin{equation}\label{Constants Eq Lem Up' statement}
        U_p(\chi_J)'_j=
    \begin{cases}
        1&\text{if}~j\notin J,~j+1\notin J~\text{or}~j\in J,~j+1\in J\\
        Y_j&\text{if}~j\in J,~j+1\notin J,~j\in J_{\rhobar}\\
        -X_j&\text{if}~j\notin J,~j+1\in J,~j\in J_{\rhobar}\\
        -[\beta^{-2}d_j]\inv&\text{if}~j\in J,~j+1\notin J,~j\notin J_{\rhobar}\\
        [\beta^{-2}d_j]&\text{if}~j\notin J,~j+1\in J,~j\notin J_{\rhobar}.
    \end{cases}
    \end{equation}
    In particular, we have $U_p(\chi_J)\in R_0[1/p]\x$. Here, a $1$-unit means an element of $1+\fm_0$, where $\fm_0$ is the maximal ideal of $R_0$.
\end{lemma}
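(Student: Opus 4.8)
\textbf{Proof plan for Lemma \ref{Constants Lem Up'}.} The plan is to unwind the definition \eqref{Constants Eq UpchiJ} of $U_p(\chi_J)_j$ directly, using Lemma \ref{Constants Lem s*ww'} to translate the combinatorial data $(s^*,w,w')$ attached to $J$ into the relevant matrix entry, and then reading off the answer from the explicit matrices $A^{(f-1-j)}=D^{(f-1-j)}A'^{(f-1-j)}$ of \eqref{Constants Eq Af-1-j}--\eqref{Constants Eq DA'f-1-j}. The point is that $U_p(\chi_J)_j$ depends only on the $(s^*_{j-1}(1),s^*_{j-1}(1))$-entry of $A^{(f-1-j)}$ modulo $v$, and by Lemma \ref{Constants Lem s*ww'} we have $s^*_{j-1}=\id$ (so $s^*_{j-1}(1)=1$) iff $j\notin J$, and $s^*_{j-1}=\fw$ (so $s^*_{j-1}(1)=2$) iff $j\in J$. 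So everything reduces to a finite case analysis according to the four possibilities for $(w_j,w'_j)=(s^*_js^*_{j-1},w'_j)$, which by Lemma \ref{Constants Lem s*ww'} correspond bijectively to the membership pattern of $j,j+1$ in $J$ (with a further split by whether $j\in J_{\rhobar}$ in the $(\fw,\fw)$ case).

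First I would record, in each of the (five, after the $J_{\rhobar}$ split) cases, which row--column index $(s^*_{j-1}(1),s^*_{j-1}(1))$ is relevant and then compute the corresponding diagonal entry of $D^{(f-1-j)}A'^{(f-1-j)}$ modulo $v$. Concretely: when $s^*_{j-1}=\id$ the entry is $(D^{(f-1-j)}A'^{(f-1-j)})_{11}$, and when $s^*_{j-1}=\fw$ it is the $(2,2)$-entry. Using \eqref{Constants Eq DA'f-1-j}, $D^{(f-1-j)}=\smat{Z'_j+[\beta]\inv&0\\0&Z_j+[\beta]}$ contributes a factor which is $[\beta]^{\pm1}$ times a $1$-unit (since $Z_j,Z'_j\in\fm_0$), and the diagonal entry of $A'^{(f-1-j)}$ modulo $v$ is in each case: $p$ (from the top-left of $\smat{v+p&0\\*&1}$, the $(\id,\id)$ case, index $(1,1)$) or $1$ (from the bottom-right of the same matrix, index $(2,2)$ — but this case does not occur, since $s^*_{j-1}=\fw$ forces $j\in J$ which is the wrong pattern); for $(\id,\fw)$, index $(1,1)$ gives $1$ and index $(2,2)$ gives $v+p\equiv p$; for $(\fw,\fw)$ with $j\in J_{\rhobar}$, from $\smat{-Y_j&1\\v&X_j}$ one gets $-Y_j$ or $X_j$; for $(\fw,\fw)$ with $j\notin J_{\rhobar}$, from $\smat{-p[\beta^{-2}d_j]\inv&1\\v&X_j-[\beta^{-2}d_j]}$ one gets $-p[\beta^{-2}d_j]\inv$ or $X_j-[\beta^{-2}d_j]\equiv-[\beta^{-2}d_j]$ modulo $\fm_0$ (a $1$-unit times $-[\beta^{-2}d_j]$, using that $X_j\in\fm_0$). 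Then $U_p(\chi_J)_j=p\cdot(\text{that entry})\inv\bmod v$, and one collects the factors: the inverse of $[\beta]^{\pm1}$ into the claimed $[\beta]$-power, the inverse of $p$ or $p$ into an integer power of $p$, and the remaining factor into $U_p(\chi_J)'_j$, matching \eqref{Constants Eq Lem Up' statement}. For the global exponent of $[\beta]$: each $j\notin J$ contributes a $[\beta]\inv$ from $D^{(f-1-j)}$ hence $[\beta]^{+1}$ after inverting, and each $j\in J$ contributes $[\beta]^{-1}$, giving $[\beta]^{|J^c|-|J|}$ overall; the four subcases of the case analysis then only need to confirm that the $[\beta^{-2}d_j]$-powers appearing in $U_p(\chi_J)'_j$ carry no extra $[\beta]$ (they are absorbed into the definition of $U_p(\chi_J)'_j$ as written).

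Finally, to get $U_p(\chi_J)\in R_0[1/p]\x$ I would note that each $U_p(\chi_J)'_j$ is a unit in $R_0[1/p]$: the values $1$ and $[\beta^{-2}d_j]^{\pm1}$ are visibly units (here $d_j\neq0$ precisely when $j\notin J_{\rhobar}$, which is exactly when those values occur), while $Y_j$ (in the case $j\in J$, $j+1\notin J$, $j\in J_{\rhobar}$) and $X_j$ (in the case $j\notin J$, $j+1\in J$, $j\in J_{\rhobar}$) become units after inverting $p$ because $f_j=X_jY_j-p$ is a relation in $R_0$ for $j\in J_{\rhobar}$; since a $1$-unit, a power of $p$ and $[\beta]^{|J^c|-|J|}$ are all units in $R_0[1/p]$, so is $U_p(\chi_J)$. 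I do not expect any genuine obstacle here: the only real work is bookkeeping in the case analysis, and the one subtlety to be careful about is the $j\notin J_{\rhobar}$, $j+1\in J$ versus $j\in J$ asymmetry in the $(\fw,\fw)$ block of \eqref{Constants Eq DA'f-1-j}, which is why the $\pm$ signs and the $\inv$ on $[\beta^{-2}d_j]$ in \eqref{Constants Eq Lem Up' statement} come out the way they do; double-checking that one entry against \cite[(15)]{DL21} is the safeguard.
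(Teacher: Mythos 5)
Your plan is the paper's own proof: use Lemma \ref{Constants Lem s*ww'} to translate membership of $j,j+1$ in $J$ into the triple $(w_j,w'_j,s^*_{j-1})$, read off the $\bra{s^*_{j-1}(1),s^*_{j-1}(1)}$-entry of $D^{(f-1-j)}A^{\prime(f-1-j)}$ modulo $v$ from \eqref{Constants Eq DA'f-1-j}, extract $[\beta]^{\pm1}$ times a $1$-unit from the $D$-factor (giving $[\beta]^{|J^c|-|J|}$ overall), use $X_j-[\beta^{-2}d_j]\in-[\beta^{-2}d_j](1+\fm_0)$ and $X_jY_j=p$ for $j\in J_{\rhobar}$, and collect powers of $p$. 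This is exactly how the paper argues, and the final step ($U_p(\chi_J)\in R_0[1/p]\x$) is handled the same way.

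Two bookkeeping slips in your case analysis should be fixed, since the whole content of the lemma is precisely these leading terms and signs. First, in the $w_j=\id$ block you claim the $(\id,\id)$ case with index $(2,2)$ ``does not occur''; it does, namely when $j\in J,\ j+1\in J,\ j\notin J_{\rhobar}$ (by Lemma \ref{Constants Lem s*ww'}, $w'_j=\id$ also in that situation), and the entry is $1$; conversely your $(\id,\fw)$ case with index $(1,1)$ is the one that never occurs (and $(\id,\fw)$ itself only occurs for $j\in J,\ j+1\in J,\ j\in J_{\rhobar}$, where the index is $(2,2)$). This mix-up is harmless for the conclusion, since every $w_j=\id$ sub-case yields $U_p(\chi_J)'_j=1$. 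Second, and more seriously, you transcribed the $(1,1)$-entry of the $(\fw,\fw)$, $j\notin J_{\rhobar}$ matrix as $-p[\beta^{-2}d_j]\inv$, whereas \eqref{Constants Eq DA'f-1-j} gives $-p\bigbra{X_j-[\beta^{-2}d_j]}\inv$, whose leading term is $+p[\beta^{-2}d_j]\inv$ (the inversion of $X_j-[\beta^{-2}d_j]\in-[\beta^{-2}d_j](1+\fm_0)$ absorbs the minus sign). Taken literally, your version would give $U_p(\chi_J)'_j=-[\beta^{-2}d_j]$ in the case $j\notin J,\ j+1\in J,\ j\notin J_{\rhobar}$, contradicting \eqref{Constants Eq Lem Up' statement}; with the correct entry one gets $+[\beta^{-2}d_j]$ as claimed. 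Both slips are local and repairable without changing the structure of the argument.
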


\begin{proof}
    Let $s^*,w,w'\in\un{W}$ be as in Lemma \ref{Constants Lem s*ww'}. By \eqref{Constants Eq DA'f-1-j} we have 
    \begin{equation*}
        D^{(f-1-j)}_{s^*_{j-1}(1)s^*_{j-1}(1)}=
    \begin{cases}
        Z_j'+[\beta]\inv\in[\beta]\inv(1+\fm_0)&\text{if}~s^*_{j-1}=\id,~\text{equivalently},~j\notin J\\
        Z_j+[\beta]\in[\beta](1+\fm_0)&\text{if}~s^*_{j-1}=\fw,~\text{equivalently},~j\in J.
    \end{cases}
    \end{equation*}
    Hence we have
    \begin{equation}\label{Constants Eq Lem Up' 1}
        \bbra{\sprod_{j=0}^{f-1}D^{(f-1-j)}_{s^*_{j-1}(1)s^*_{j-1}(1)}}\inv\in[\beta]^{|J^c|-|J|}(1+\fm_0).
    \end{equation}
    By \eqref{Constants Eq DA'f-1-j} we also have
    \begin{equation}\label{Constants Eq Lem Up' 2}
        \bbra{A^{\prime(f-1-j)}_{s^*_{j-1}(1)s^*_{j-1}(1)}\mod v}=
    \begin{cases}
        1~\text{or}~p&\text{if}~w_j=\id\\
        X_j&\text{if}~\bra{w_j,w'_j,s^*_{j-1}}\!=\!(\fw,\fw,\fw)~\text{and}~j\in J_{\rhobar}\\
        -Y_j&\text{if}~\bra{w_j,w'_j,s^*_{j-1}}\!=\!(\fw,\fw,\id)~\text{and}~j\in J_{\rhobar}\\
        X_j-[\beta^{-2}d_j]&\text{if}~\bra{w_j,w'_j,s^*_{j-1}}\!=\!(\fw,\fw,\fw)~\text{and}~j\notin J_{\rhobar}\\
        -p\bigbra{\!X_j\!-\![\beta^{-2}d_j]}\inv&\text{if}~\bra{w_j,w'_j,s^*_{j-1}}\!=\!(\fw,\fw,\id)~\text{and}~j\notin J_{\rhobar}.
    \end{cases}
    \end{equation}
    Since $X_j-[\beta^{-2}d_j]\in-[\beta^{-2}d_j](1+\fm_0)$ for all $j\in\cJ$ and  $X_jY_j=p$ in $R_0$ for $j\in J_{\rhobar}$, from  \eqref{Constants Eq Lem Up' 2} and \eqref{Constants Eq Lem s*ww' statement} we deduce that 
    \begin{equation}\label{Constants Eq Lem Up' 3}
        \bbra{p\bbra{A^{\prime(f-1-j)}_{s^*_{j-1}(1)s^*_{j-1}(1)}}\inv\mod v}\in p^{\NNN}U_p(\chi_J)'_j(1+\fm_0)\quad\text{in}~R_0
    \end{equation}
    for $U_p(\chi_J)'_j$ as in \eqref{Constants Eq Lem Up' statement}. The result is then a combination of \eqref{Constants Eq Lem Up' 1} and \eqref{Constants Eq Lem Up' 3}.
\end{proof}

In the rest of this section we suppose that $J_{\rhobar}\neq\cJ$. Then for each $J\subseteq\cJ$, there exists $i\geq0$ such that $\delta_{\ss}^i(J)=\emptyset$ (see \eqref{Constants Eq various J} for $\delta_{\ss}$) and we define $\ell(J)\eqdef\min\set{i\geq0:\delta_{\ss}^i(J)=\emptyset}$. We then define
\begin{equation*}
    \wt{U}_p(J)\eqdef\frac{\sprod_{i=0}^{\ell(J)-1}U_p\bbra{\chi_{\delta_{\ss}^i(J)}}}{\sprod_{i=0}^{\ell(J^{\ss})-1}U_p\bbra{\chi_{\delta_{\ss}^i(J^{\ss})}}}\in R_0[1/p]
\end{equation*}
Since $J_{\rhobar}\neq\cJ$, there is a unique decomposition of $J_{\rhobar}$ into a disjoint union of intervals (in $\ZZ/f\ZZ$) not adjacent to each other $J_{\rhobar}=J_1\sqcup\ldots\sqcup J_t$. For each $1\leq i\leq t$, we write $J_i=\set{j_i,j_i+1,\ldots,j_i+k_i}$ with $j_i\in\cJ$ and $k_i\geq0$. Then we define
\begin{equation}\label{Constants Eq AssJ}
    A^{\ss}(J)\eqdef\ssum_{i=1}^{t}\bigbra{\delta_{j_i+k_i\in\partial(J^c)}(k_i+1)}\in\ZZ.
\end{equation}

\begin{proposition}\label{Constants Prop UpJ}
    Suppose that $J_{\rhobar}\neq\cJ$. Let $J\subseteq\cJ$. Then $\wt{U}_p(J)$ is a product of a $1$-unit of $R_0$, an integer power of $p$, and the scalar $[U_p(J)]$ with 
    \begin{equation}\label{Constants Eq UpJ}
        U_p(J)=(-1)^{A(J)}\beta^{B(J)}d(J)\in\FF\x,
    \end{equation}
    where 
    \begin{equation}\label{Constants Eq ABd}
    \begin{aligned}
        A(J)&\eqdef A^{\ss}(J)+\ssum_{j\notin J_{\rhobar}}\delta_{j\in\partial J}\in\ZZ;\\
        B(J)&\eqdef\ssum_{i=0}^{\ell(J)-1}\bigbra{\abs{\delta_{\ss}^i(J)^c}-\abs{\delta_{\ss}^i(J)}}-\ssum_{i=0}^{\ell(J^{\ss})-1}\bigbra{\abs{\delta_{\ss}^i(J^{\ss})^c}-\abs{\delta_{\ss}^i(J^{\ss})}}\in\ZZ;\\
        d(J)&\eqdef\bbra{\sprod_{j\in J^{\nss}}d_j}\inv\!\bbbra{\frac{\sprod_{i=0}^{\ell(J)-1}\sprod_{j\in\bra{\delta_{\ss}^i(J)-1}^{\nss}}d_j}{\sprod_{i=0}^{\ell(J^{\ss})-1}\sprod_{j\in\bra{\delta_{\ss}^i(J^{\ss})-1}^{\nss}}d_j}}\in\FF\x.
    \end{aligned}
    \end{equation}
\end{proposition}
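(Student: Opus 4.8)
The strategy is to unwind the definition of $\wt{U}_p(J)$ through Lemma \ref{Constants Lem Up'}, which already expresses each factor $U_p(\chi_{J'})$ as a product of a $1$-unit, a power of $p$, the scalar $[\beta]^{|(J')^c|-|J'|}$, and the explicit product $U_p(\chi_{J'})'$ given entry-by-entry in \eqref{Constants Eq Lem Up' statement}. Thus $\wt{U}_p(J)$ is automatically a product of a $1$-unit, a power of $p$, and the Teichm\"uller lift of the scalar
\begin{equation*}
    \frac{\sprod_{i=0}^{\ell(J)-1}\bbra{\beta^{|\delta_{\ss}^i(J)^c|-|\delta_{\ss}^i(J)|}U_p\bigbra{\chi_{\delta_{\ss}^i(J)}}'_{\mathrm{red}}}}{\sprod_{i=0}^{\ell(J^{\ss})-1}\bbra{\beta^{|\delta_{\ss}^i(J^{\ss})^c|-|\delta_{\ss}^i(J^{\ss})|}U_p\bigbra{\chi_{\delta_{\ss}^i(J^{\ss})}}'_{\mathrm{red}}}},
\end{equation*}
where $U_p(\cdot)'_{\mathrm{red}}$ denotes the reduction mod $\fm_0$ (replacing $X_j,Y_j$ by $0$ wherever they occur in \eqref{Constants Eq Lem Up' statement}, which is legitimate since any $X_j$ or $Y_j$ appearing there does so only for $j\notin J_{\rhobar}$, where no relation forces it to a unit—wait, actually one must be careful: $X_j,Y_j$ appearing in $U_p(\chi_J)'_j$ occur only when $j\in J_{\rhobar}$, so they are not units and the reduction is genuinely $0$; hence those factors must be tracked differently). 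So the first real task is to see that the $X_j$, $Y_j$ contributions cancel in the quotient, and this is exactly what the operator $\delta_{\ss}$ and the definition of $\ell(J)$ are engineered to do.

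\textbf{Key steps.} First I would record, for an arbitrary $J'$, the decomposition of the index set $\cJ$ into the five ``boundary types'' from \eqref{Constants Eq Lem Up' statement}, and observe that the power of $\beta$ contributed by $U_p(\chi_{J'})$ is $B'(J')\eqdef(|(J')^c|-|J'|)+\#\{j\in\partial(J')\cap J_{\rhobar}^c, \text{of type } -X\} - \#\{\ldots\}$—more precisely, from \eqref{Constants Eq Lem Up' statement} the $j\notin J_{\rhobar}$ entries contribute $\pm[\beta^{-2}d_j]^{\pm1}$, i.e. a factor $\beta^{\mp 2}d_j^{\pm1}$ with an explicit sign $(-1)^{\delta_{j\in\partial J'}}$-type contribution. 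Summing the exponents of $\beta$ over the telescoping chain $J, \delta_{\ss}(J),\ldots,\delta_{\ss}^{\ell(J)-1}(J)$ and subtracting the analogous chain for $J^{\ss}$ gives exactly $B(J)$ in \eqref{Constants Eq ABd}: the ``$|(J')^c|-|(J')|$'' parts assemble into the first line of $B(J)$, while the ``$\beta^{\mp2}$'' parts from the non-$J_{\rhobar}$ boundary entries must be shown to cancel between numerator and denominator—this is where one uses that $\delta_{\ss}(J)=(J-1)^{\ss}$ has the same $J_{\rhobar}$-part behaviour, together with the fact that each $d_j$ with $j\notin J_{\rhobar}$ is a unit so its contribution is genuinely a scalar. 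Second, I would collect the signs: each non-$J_{\rhobar}$ boundary entry of type $j\in J', j+1\notin J'$ contributes $-[\beta^{-2}d_j]\inv$ and each of type $j\notin J', j+1\in J'$ contributes $+[\beta^{-2}d_j]$, so the sign from a single $J'$ is $(-1)^{\#\{j\notin J_{\rhobar}:\,j\in J',\,j+1\notin J'\}}=(-1)^{\#(\partial(J')\cap J_{\rhobar}^c)}$; telescoping over the two chains and noting that $\delta_{\ss}^i(J^{\ss})$ is contained in $J_{\rhobar}$ (so its non-$J_{\rhobar}$ boundary is empty!) collapses the denominator's sign contribution to $0$, leaving $(-1)^{\sum_{j\notin J_{\rhobar}}\delta_{j\in\partial J}}$ once one checks the higher chain terms $\delta_{\ss}^i(J)$ for $i\geq1$ are also $\subseteq J_{\rhobar}$; thus the entire ``$\sum_{j\notin J_{\rhobar}}\delta_{j\in\partial J}$'' part of $A(J)$ comes only from $J'=J$ itself. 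Third, the $d_j$'s: from \eqref{Constants Eq Lem Up' statement}, a boundary entry $j\in J',j+1\notin J', j\in J_{\rhobar}$ gives $Y_j$ (which reduces to $0$, so must be paired off), $j\notin J',j+1\in J', j\in J_{\rhobar}$ gives $-X_j$ (likewise), while $j$ with $j\notin J_{\rhobar}$ on the boundary gives $d_j^{\pm1}$ times $\beta^{\mp2}$ times a sign. Here the crucial combinatorial identity is that $\sum_{i\ge 0}$ over the $\delta_{\ss}$-chain, the $X_j$ and $Y_j$ factors cancel against the power of $p$ (via $X_jY_j=p$ for $j\in J_{\rhobar}$) precisely at the step where $j$ exits $J_{\rhobar}$, contributing the $A^{\ss}(J)$ term through the count $\delta_{j_i+k_i\in\partial(J^c)}(k_i+1)$ in \eqref{Constants Eq AssJ}—this matches the ``$(-1)^{A^{\ss}(J)}$'' and an integer power of $p$. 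Finally, reassembling: the surviving scalar is $(-1)^{A^{\ss}(J)+\sum_{j\notin J_{\rhobar}}\delta_{j\in\partial J}}\,\beta^{B(J)}\,d(J)$ with $d(J)$ as in \eqref{Constants Eq ABd}, the last being exactly the telescoping product of the $d_j^{\pm1}$'s from the non-$J_{\rhobar}$ boundary entries of the two chains (the $\bigl(\sprod_{j\in J^{\nss}}d_j\bigr)\inv$ prefactor absorbing the $j\in J,j+1\notin J, j\notin J_{\rhobar}$ entries via $-[\beta^{-2}d_j]\inv$).

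\textbf{Main obstacle.} The heart of the argument is the bookkeeping for the $X_j,Y_j$ (equivalently $p$-power) contributions: one must show that along the chain $J\rightsquigarrow\delta_{\ss}(J)\rightsquigarrow\cdots\rightsquigarrow\emptyset$, every $Y_j$ (for $j\in J_{\rhobar}$ on a right-boundary) and every $-X_j$ (for $j\in J_{\rhobar}$ on a left-boundary) appearing at some step gets matched—either with its partner via $X_jY_j=p$ at the step where the interval containing $j$ is ``eaten away'' by $\delta_{\ss}$, or trivially absorbed—and that the leftover count is precisely $A^{\ss}(J)$ of \eqref{Constants Eq AssJ}, with the $\delta_{j_i+k_i\in\partial(J^c)}$ condition detecting whether the interval $J_i\subseteq J_{\rhobar}$ survives into $J$ with the right endpoint. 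This requires a careful induction on $\ell(J)$ (or on $|J\cap J_{\rhobar}|$), tracking how each maximal interval of $J\cap J_{\rhobar}$ shrinks under $\delta_{\ss}$ and when its endpoints hit $\partial(J^c)$; the rest—the $\beta$-exponents and the $d_j$-products—is then a clean telescoping computation, and the signs from Lemma \ref{Constants Lem t+t+s}-type parity considerations, while tedious, are routine. I expect the interval-shrinking induction to be the step that genuinely needs care, and I would isolate it as a separate sublemma before assembling \eqref{Constants Eq UpJ}.
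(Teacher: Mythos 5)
Your plan is correct and follows essentially the same route as the paper: reduce via Lemma \ref{Constants Lem Up'} to the explicit products $U_p(\chi_{J'})'$ along the two $\delta_{\ss}$-chains, cancel the $X_j,Y_j$ factors in pairs through $X_jY_j=p$ to extract $(-1)^{A^{\ss}(J)}$ and a power of $p$, observe that the $-[\beta^{-2}d_j]^{-1}$-type signs occur only at $J'=J$ (since all later chain terms lie in $J_{\rhobar}$), and match the surviving $\beta$- and $d_j$-exponents with $B(J)$ and $d(J)$. The only organizational difference is that the paper replaces your proposed interval-shrinking induction by a direct count, for each fixed $j$, of the steps $i$ at which each entry type occurs (using the explicit criterion for $j\in\delta_{\ss}^i(J)$ and the integer $k(j)$), which yields $A^{\ss}(J)$ and the identity $\sum_{j\notin J_{\rhobar}}N(J)_j=0$ without induction.
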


\begin{proof}
    We write $\wt{U}_p(J)'\eqdef\prod_{j=0}^{f-1}\wt{U}_p(J)'_j\in R_0[1/p]$ with
    \begin{equation*}
        \wt{U}_p(J)'_j\eqdef\frac{\sprod_{i=0}^{\ell(J)-1}U_p\bbra{\chi_{\delta_{\ss}^i(J)}}'_j}{\sprod_{i=0}^{\ell(J^{\ss})-1}U_p\bbra{\chi_{\delta_{\ss}^i(J^{\ss})}}'_j}=\frac{\sprod_{i\geq0}U_p\bbra{\chi_{\delta_{\ss}^i(J)}}'_j}{\sprod_{i\geq0}U_p\bbra{\chi_{\delta_{\ss}^i(J^{\ss})}}'_j},
    \end{equation*}
    where each $U_p\bbra{\chi_{J'}}'_j$ is defined in \eqref{Constants Eq Lem Up' statement} and the equality uses $U_p\bbra{\chi_{\emptyset}}'_j=1$ for all $j\in\cJ$. By Lemma \ref{Constants Lem Up'}, it suffices to show that $\wt{U}_p(J)'\in p^{\ZZ}(-1)^{A(J)}[d(J)]$ for $A(J)\in\ZZ$ and $d(J)\in\FF\x$ as in \eqref{Constants Eq ABd}.

    We fix $j\in\cJ$ and compute $\wt{U}_p(J)'_j$. By definition, for $i\geq0$ we have
    \begin{equation}\label{Constants Eq j in deltass}
    \begin{aligned}
        &j\in\delta_{\ss}^i(J)\iff\bbra{j+i\in J,~\text{and}~j+i'\in J_{\rhobar}~\text{for}~0\leq i'\leq i-1}\\
        &j\in\delta_{\ss}^i(J^{\ss})\iff\bbra{j+i\in J,~\text{and}~j+i'\in J_{\rhobar}~\text{for}~0\leq i'\leq i}.
    \end{aligned}
    \end{equation}
    We let $0\leq k\leq f-1$ be the unique integer such that $j+i\in J_{\rhobar}$ for $1\leq i\leq k$, and $j+k+1\notin J_{\rhobar}$. We also write it as $k(j)$ to emphasize its dependence on $j$. We separate the following two cases.

    \hspace{\fill}
    
    \noindent\textbf{Case 1.}~Assume that $j\in J_{\rhobar}$. Then we write
    \begin{align*}
    &\begin{aligned}
        S_1(J)_j&\eqdef\#\bigset{i\geq0:U_p\bbra{\chi_{\delta_{\ss}^i(J)}}'_j=Y_j}\\
        &=\#\bigset{i\geq0:j\in\delta_{\ss}^i(J),~j+1\notin\delta_{\ss}^i(J)}=\bbra{\ssum_{i'=0}^{k}\delta_{j+i'\in\partial J}}+\delta_{j+k+1\in J};
    \end{aligned}\\
    &\begin{aligned}
        S_2(J)_j&\eqdef\#\bigset{i\geq0:U_p\bbra{\chi_{\delta_{\ss}^i(J)}}'_j=-X_j}\\&=\#\bigset{i\geq0:j\notin\delta_{\ss}^i(J),~j+1\in\delta_{\ss}^i(J)}=\ssum_{i'=0}^{k}\delta_{j+i'\in\partial(J^c)};
    \end{aligned}\\
    &\begin{aligned}
        S_1(J^{\ss})_j&\eqdef\#\bigset{i\geq0:U_p\bbra{\chi_{\delta_{\ss}^i(J^{\ss})}}'_j=Y_j}\\
        &=\#\bigset{i\geq0:j\in\delta_{\ss}^i(J^{\ss}),~j+1\notin\delta_{\ss}^i(J^{\ss})}=\bbra{\ssum_{i'=0}^{k-1}\delta_{j+i'\in\partial J}}+\delta_{j+k\in J};
    \end{aligned}\\
    &\begin{aligned}
        S_2(J^{\ss})_j&\eqdef\#\bigset{i\geq0:U_p\bbra{\chi_{\delta_{\ss}^i(J^{\ss})}}'_j=-X_j}\\&=\#\bigset{i\geq0:j\notin\delta_{\ss}^i(J^{\ss}),~j+1\in\delta_{\ss}^i(J^{\ss})}=\ssum_{i'=0}^{k-1}\delta_{j+i'\in\partial(J^c)},
    \end{aligned}
    \end{align*}
    where in each formula the first equality follows from \eqref{Constants Eq Lem Up' statement} and the second equality follows from \eqref{Constants Eq j in deltass}. In particular, we have
    \begin{multline*}
        \bigbra{S_1(J)_j-S_1(J^{\ss})_j}-\bigbra{S_2(J)_j-S_2(J^{\ss})_j}=\bigbra{\delta_{j+k\in\partial J}+\delta_{j+k+1\in J}-\delta_{j+k\in J}}-\delta_{j+k\in\partial(J^c)}\\
        =\bigbra{\delta_{j+k\in J}\bra{1-\delta_{j+k+1\in J}}+\delta_{j+k+1\in J}-\delta_{j+k\in J}}-\bra{1-\delta_{j+k\in J}}\delta_{j+k+1\in J}=0.
    \end{multline*}
    Then using $X_jY_j=p$ in $R_0$ since $j\in J_{\rhobar}$, we deduce that
    \begin{equation}\label{Constants Eq Prop UpJ 1}
        \wt{U}_p(J)'_j=Y_j^{S_1(J)_j-S_1(J^{\ss})_j}(-X_j)^{S_2(J)_j-S_2(J^{\ss})_j}\in(-1)^{\delta_{j+k\in\partial(J^c)}}p^{\ZZ}.
    \end{equation}

    \hspace{\fill}

    \noindent\textbf{Case 2.}~Assume that $j\notin J_{\rhobar}$. Then we write
    \begin{align*}
    &\begin{aligned}
        N_1(J)_j&\eqdef\#\bigset{i\geq0:U_p\bbra{\chi_{\delta_{\ss}^i(J)}}'_j=-[\beta^{-2}d_j]\inv}\\
        &=\#\bigset{i\geq0:j\in\delta_{\ss}^i(J),~j+1\notin\delta_{\ss}^i(J)}=\delta_{j\in\partial J};
    \end{aligned}\\
    &\begin{aligned}
        N_2(J)_j&\eqdef\#\bigset{i\geq0:U_p\bbra{\chi_{\delta_{\ss}^i(J)}}'_j=[\beta^{-2}d_j]}\\&=\#\bigset{i\geq0:j\notin\delta_{\ss}^i(J),~j+1\in\delta_{\ss}^i(J)}=\delta_{j\in\partial(J^c)}+\ssum_{i'=2}^{k+1}\delta_{j+i'\in J};
    \end{aligned}\\
    &\begin{aligned}
        N_1(J^{\ss})_j&\eqdef\#\bigset{i\geq0:U_p\bbra{\chi_{\delta_{\ss}^i(J^{\ss})}}'_j=-[\beta^{-2}d_j]\inv}\\
        &=\#\bigset{i\geq0:j\in\delta_{\ss}^i(J^{\ss}),~j+1\notin\delta_{\ss}^i(J^{\ss})}=0;
    \end{aligned}\\
    &\begin{aligned}
        N_2(J^{\ss})_j&\eqdef\#\bigset{i\geq0:U_p\bbra{\chi_{\delta_{\ss}^i(J^{\ss})}}'_j=[\beta^{-2}d_j]}\\&=\#\bigset{i\geq0:j\notin\delta_{\ss}^i(J^{\ss}),~j+1\in\delta_{\ss}^i(J^{\ss})}=\ssum_{i'=1}^{k}\delta_{j+i'\in J},
    \end{aligned}
    \end{align*}
    where in each formula the first equality follows from \eqref{Constants Eq Lem Up' statement} and the second equality follows from \eqref{Constants Eq j in deltass}. In particular, we have
    \begin{equation*}
    \begin{aligned}
        N(J)_j&\eqdef\bigbra{N_2(J)_j-N_2(J^{\ss})_j}-\bigbra{N_1(J)_j-N_1(J^{\ss})_j}\\
        &=\bigbra{\delta_{j\in\partial(J^c)}+\delta_{j+k+1\in J}-\delta_{j+1\in J}}-\delta_{j\in\partial J}\\
        &=\bigbra{1-\delta_{j\in J}}\delta_{j+1\in J}+\delta_{j+k+1\in J}-\delta_{j+1\in J}+\delta_{j\in J}\bigbra{1-\delta_{j+1\in J}}=\delta_{j+k+1\in J}-\delta_{j\in J}.
    \end{aligned}
    \end{equation*}
    Hence we have
    \begin{equation}\label{Constants Eq Prop UpJ 2}
        \wt{U}_p(J)'_j=(-1)^{N_1(J)_j+N_1(J^{\ss})_j}[\beta^{-2}d_j]^{N(J)_j}=(-1)^{\delta_{j\in\partial J}}[\beta^{-2}d_j]^{N(J)_j}.
    \end{equation}

    \hspace{\fill}

    By the definition of $d(J)$, we have $d(J)=\sprod_{j\notin J_{\rhobar}}d_j^{M(J)_j}$ with (for each $j\notin J_{\rhobar}$)
    \begin{equation}\label{Constants Eq Prop UpJ 3}
    \begin{aligned}
        M(J)_j&=-\delta_{j\in J}+\bbra{\ssum_{i=0}^{\ell(J)-1}\delta_{j+1\in\delta_{\ss}^i(J)}}-\bbra{\ssum_{i=0}^{\ell(J^{\ss})-1}\delta_{j+1\in\delta_{\ss}^i(J^{\ss})}}\\
        &=-\delta_{j\in J}+\bbra{\ssum_{i=0}^{k(j)}\delta_{j+i+1\in J}}-\bbra{\ssum_{i=0}^{k(j)-1}\delta_{j+i+1\in J}}\\
        &=\delta_{j+k(j)+1\in J}-\delta_{j\in J}=N(J)_j.
    \end{aligned}
    \end{equation}
    By the definition of $k(j)$, we have that $j+k(j)+1$ is the first place after $j$ that is not in $J_{\rhobar}$, hence we have
    \begin{equation}\label{Constants Eq Prop UpJ 4}
        \ssum_{j\notin J_{\rhobar}}N(J)_j=\ssum_{j\notin J_{\rhobar}}\bbra{\delta_{j+k(j)+1\in J}-\delta_{j\in J}}=0.
    \end{equation}
    Combining \eqref{Constants Eq Prop UpJ 1}, \eqref{Constants Eq Prop UpJ 2}, \eqref{Constants Eq Prop UpJ 3} and \eqref{Constants Eq Prop UpJ 4}, we deduce that
    \begin{equation*}
    \begin{aligned}
        \wt{U}_p(J)'&=\sprod_{j=0}^{f-1}\wt{U}_p(J)'_j\\
        &\in p^{\ZZ}(-1)^{\sum_{j\in J_{\rhobar}}\delta_{j+k(j)\in\partial(J^c)}+\sum_{j\notin J_{\rhobar}}\delta_{j\in\partial J}}[\beta]^{-2\sum_{j\notin J_{\rhobar}}N(J)_j}\!\bbra{\sprod_{j\notin J_{\rhobar}}[d_j]^{N(J)_j}}\\
        &=p^{\ZZ}(-1)^{A(J)}[d(J)]
    \end{aligned}
    \end{equation*}
    for $A(J)\in\ZZ$ and $d(J)\in\FF\x$ as in \eqref{Constants Eq ABd}, which completes the proof.
\end{proof}

\section{Constants in the Diamond diagrams}\label{Constants Sec Diamond diagram}

We review the strategy of \cite{DL21} to compute the constants in the diagram $(\pi^{I_1}\into\pi^{K_1})$. When $\rhobar$ is non-semisimple, we specialize the general formula to some particular constants that are needed to prove the main result (Theorem \ref{Constants Thm main}), see Example \ref{Constants Ex formula gammaJ}. 

For $0\leq i\leq q-1$, we define (with the convention that $0^0\eqdef1$)
\begin{equation}\label{Constants Eq operator S}
\begin{aligned}
    S_i&\eqdef\sum\limits_{\lambda\in\Fq\x}[\lambda]^i\pmat{\lambda&1\\1&0}\in\cO[\GL_2(\Fq)];\\
    S^+_i&\eqdef\sum\limits_{\lambda\in\Fq\x}[\lambda]^i\pmat{1&0\\\lambda&1}\in\cO[\GL_2(\Fq)].
\end{aligned}
\end{equation}
As in \cite[Def.~4.1]{DL21}, we let $R:D_0(\rhobar)^{I_1}\to\bigbra{\soc_{\GL_2(\OK)}D_0(\rhobar)}^{I_1}$ be the unique map defined as follows: if $\chi:I\to\FF\x$ is an $I$-character such that $D_0(\rhobar)^{I_1}[\chi]\neq0$, then $R|_{D_0(\rhobar)^{I_1}[\chi]}$ is given by $S_{i(\chi)}$ for some unique $0\leq i(\chi)\leq q-1$, except when $\chi$ appears in $\bigbra{\soc_{\GL_2(\OK)}D_0(\rhobar)}^{I_1}$, in which case $R|_{D_0(\rhobar)^{I_1}[\chi]}$ is the identity. Given $\chi$, we write $R\chi$ for the $I$-character such that $R\bigbra{D_0(\rhobar)^{I_1}[\chi]}=D_0(\rhobar)^{I_1}[R\chi]$. Then we define $g_{\chi}:D_0(\rhobar)^{I_1}[R\chi]\to D_0(\rhobar)^{I_1}[R\chi^s]$ by the formula $g_{\chi}\bra{R(v)}=R\bigbra{\!\!\smat{0&1\\p&0}v}$ for $v\in D_0(\rhobar)^{I_1}[\chi]$. In particular, we have $g_{\chi^s}=g_{\chi}\inv$ for all $\chi$.

\begin{example}\label{Constants Ex define gammaJ}
    Suppose that $J_{\rhobar}\neq\cJ$. We let $J\subseteq\cJ$ such that $J\neq J^*$ (see Lemma \ref{Constants Lem J*} for $J^*$, which implies $J^{\nss}\neq\cJ$). By \cite[Lemma~4.1(iii)]{Wang2} we have $R\chi_J=\chi_{J^{\ss}}$ and $R\chi_J^s=\chi_{(J-1)^{\ss}}$. In particular, we have $R\chi_J=\chi_J$ if and only if $J=J^{\ss}$ and we have $R\chi_J^s\neq\chi_J^s$ by Lemma \ref{Constants Lem J*}. 

    By \eqref{Constants Eq muJJ'} with $J'=(J-1)^{\ss}$ and  \cite[Lemma~3.2.2.5(i)]{BHHMS2}, we have
    \begin{equation}\label{Constants Eq SvJs}
        S_{i(\chi_J^s)}v_{J^s}=(-1)^{f-1}P_1(J)\mu_{J,(J-1)^{\ss}}v_{(J-1)^{\ss}}
    \end{equation}
    with
    \begin{equation}\label{Constants Eq P1}
    \begin{aligned}
        i(\chi_J^s)&=\ssum_{j+1\in J\Delta(J-1)^{\ss}}\bigbra{p-1-s^{(J-1)^{\ss}}_j}p^j;\\
        P_1(\chi_J)&\eqdef\sprod_{j+1\in J\Delta(J-1)^{\ss}}\bigbra{p-1-s^{(J-1)^{\ss}}_j}!\in\FF\x.
    \end{aligned}
    \end{equation}
    If moreover $J\nsubseteq J_{\rhobar}$, then by Proposition \ref{Constants Prop mu for Js}(i) and \cite[Lemma~3.2.2.5(i)]{BHHMS2} we have
    \begin{equation}\label{Constants Eq SvJ}
        S_{i(\chi_J)}v_J=(-1)^{f-1}P_2(J)\mu_{J^s,J^{\ss}}v_{J^{\ss}}
    \end{equation}
    with
    \begin{equation}\label{Constants Eq P2}
    \begin{aligned}
        i(\chi_J)&=\ssum_{j+1\notin J^{\nss}}\bigbra{p-1-s^{J^{\ss}}_j}p^j;\\
        P_2(\chi_J)&\eqdef\sprod_{j+1\notin J^{\nss}}\bigbra{p-1-s^{J^{\ss}}_j}!\in\FF\x.
    \end{aligned}
    \end{equation}
    Combining \eqref{Constants Eq SvJs} and \eqref{Constants Eq SvJ} we get
    \begin{equation}\label{Constants Eq gchiJ}
        g_{\chi_J}(v_{J^{\ss}})=
    \left\{\begin{aligned}
        (-1)^{f-1}P_1(\chi_J)\mu_{J,(J-1)^{\ss}}v_{(J-1)^{\ss}}&\quad\text{if}~J\subseteq J_{\rhobar}\\
        \frac{(-1)^{f-1}P_1(\chi_J)\mu_{J,(J-1)^{\ss}}}{(-1)^{f-1}P_2(\chi_J)\mu_{J^s,J^{\ss}}}v_{(J-1)^{\ss}}&\quad\text{if}~J\nsubseteq J_{\rhobar}.
    \end{aligned}\right.
    \end{equation}
    
    For $J\nsubseteq J_{\rhobar}$ we define 
    \begin{equation}\label{Constants Eq gammaJ}
        \gamma(J)\eqdef(-1)^{\abs{J\cap(J-1)^{\nss}}}\frac{\mu_{*,J}}{\mu_{*,J^{\ss}}}\bbbra{\frac{\sprod_{i=0}^{\ell(J)-1}(-1)^{f-1}\mu_{\delta_{\ss}^i(J),\delta_{\ss}^{i+1}(J)}}{\sprod_{i=0}^{\ell(J^{\ss})-1}(-1)^{f-1}\mu_{\delta_{\ss}^i(J^{\ss}),\delta_{\ss}^{i+1}(J^{\ss})}}}.
    \end{equation}
    Consider the following two maps:
    \begin{equation*}
    \begin{aligned}
        &D_0(\rhobar)^{I_1}[R\chi_J]\xrightarrow{g_{\chi_J}} D_0(\rhobar)^{I_1}[R\chi_{\delta_{\ss}(J)}]\xrightarrow{g_{\chi_{\delta_{\ss}(J)}}}\cdots\rightarrow D_0(\rhobar)^{I_1}[R\chi_{\emptyset}];\\
        &D_0(\rhobar)^{I_1}[R\chi_J]=D_0(\rhobar)^{I_1}[R\chi_{J^{\ss}}]\xrightarrow{g_{\chi_{J^{\ss}}}} D_0(\rhobar)^{I_1}[R\chi_{\delta_{\ss}(J^{\ss})}]\xrightarrow{g_{\chi_{\delta_{\ss}(J^{\ss})}}}\cdots\rightarrow D_0(\rhobar)^{I_1}[R\chi_{\emptyset}].
    \end{aligned}
    \end{equation*}    
    Suppose that the composition
    \begin{equation*}
        \bbra{\sprod^{0}_{i=\ell(J^{\ss})-1}g_{\chi_{\delta_{\ss}^i(J^{\ss})}}}\inv\circ\bbra{\sprod^{0}_{i=\ell(J)-1}g_{\chi_{\delta_{\ss}^i(J)}}}:D_0(\rhobar)^{I_1}[R\chi_J]\to D_0(\rhobar)^{I_1}[R\chi_J]
    \end{equation*}
    is given by the scalar $g(J)\in\FF\x$. Then by \eqref{Constants Eq gchiJ} and Proposition \ref{Constants Prop ratio Js} we have
    \begin{equation}\label{Constants Eq gammaJ and gJ}
        \gamma(J)=(-1)^{f-1+\abs{J\cap(J-1)^{\nss}}}\mu_{J^s,J}\bigbra{P_2(J)/P_1(J)}g(J),
    \end{equation}
    where $P_1(J)\eqdef\bigbra{\sprod_{i=0}^{\ell(J)-1}P_1\bigbra{\chi_{\delta_{\ss}^i(J)}}}/\bigbra{\sprod_{i=0}^{\ell(J^{\ss})-1}P_1\bigbra{\chi_{\delta_{\ss}^i(J^{\ss})}}}$ and $P_2(J)\eqdef P_2(\chi_J)$.
\end{example}

\begin{lemma}\label{Constants Lem muempty}
    We have $\mu_{\emptyset,\emptyset}=(-1)^{f-1}\xi$ (see \eqref{Constants Eq rhobar} for $\xi$).
\end{lemma}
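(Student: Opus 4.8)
The plan is to unwind the definition of $\mu_{\emptyset,\emptyset}$, reduce it to the Diamond‑diagram constant attached to the bottom Serre weight $\sigma_{\un0}$, and identify that constant with the unramified parameter $\xi$. First I would note that formula \eqref{Constants Eq muJJ'} applies to $J=J'=\emptyset$: we have $(\emptyset-1)^{\ss}=\emptyset=\emptyset^{\ss}$, $\emptyset^{\nss}=\emptyset\neq\cJ$ since $f\geq1$, and \eqref{Constants Eq condition simple mu} reads $\emptyset\subseteq\emptyset$. As $\emptyset\Delta\emptyset=\emptyset$ and $(\emptyset\cap\emptyset)^{\nss}=\emptyset$, the formula collapses to
\[
\un{Y}^{\un{p}-\un{1}}\smat{p&0\\0&1}v_\emptyset=\mu_{\emptyset,\emptyset}\,v_\emptyset ,
\]
where $v_\emptyset$ spans $\sigma_\emptyset^{I_1}=\sigma_{\un0}^{I_1}$ and, by \cite[Prop.~4.2]{Wang2}, $\ang{\GL_2(\OK)v_\emptyset}=\sigma_\emptyset$ is irreducible. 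Since $\chi_\emptyset$ occurs in $\soc_{\GL_2(\OK)}D_0(\rhobar)$, the map $R$ is the identity on $\pi^{I_1}[\chi_\emptyset]$, while $\sigma_{\emptyset^s}\notin W(\rhobar)$, so $R$ acts on $\pi^{I_1}[\chi_{\emptyset^s}]$ by $S_{i(\chi_{\emptyset^s})}$; hence $g_{\chi_\emptyset}(v_\emptyset)=R\bigl(\smat{0&1\\p&0}v_\emptyset\bigr)=R(v_{\emptyset^s})=S_{i(\chi_{\emptyset^s})}v_{\emptyset^s}$. Combining the displayed identity with \cite[Lemma~3.2.2.5(i)]{BHHMS2} — exactly the computation behind \eqref{Constants Eq SvJs}, specialized to $J=\emptyset$ where the factorial factor $P_1$ is an empty product, hence $1$ — gives $S_{i(\chi_{\emptyset^s})}v_{\emptyset^s}=(-1)^{f-1}\mu_{\emptyset,\emptyset}v_\emptyset$. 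Thus $\mu_{\emptyset,\emptyset}=(-1)^{f-1}g_{\chi_\emptyset}$, where I also write $g_{\chi_\emptyset}\in\FF\x$ for the scalar by which $g_{\chi_\emptyset}$ acts on the line $\pi^{I_1}[\chi_\emptyset]$, and the lemma is equivalent to $g_{\chi_\emptyset}=\xi$.

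To prove $g_{\chi_\emptyset}=\xi$ I would argue as in \cite{DL21}: the $\GL_2(\OK)$‑module $\ang{\GL_2(\OK)v_{\emptyset^s}}$ is a quotient of $\Ind_I^{\GL_2(\OK)}\chi_{\emptyset^s}$ having $\sigma_\emptyset$ among its Jordan--Hölder factors (indeed as its socle), so $g_{\chi_\emptyset}$ is detected by the way the corresponding type meets the lattice cutting out $\pi$, i.e.\ by the eigenvalue $U_p(\chi_\emptyset)$ attached to $\sigma_\emptyset$ in the tamely potentially Barsotti--Tate deformation ring $R_0$ — the patched module of \cite{BD14}, \cite{EGS15} being the bridge. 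By \eqref{Constants Eq UpchiJ} with $J=\emptyset$ (then $s^*_{j-1}=w_j=w'_j=\id$ for all $j$ by Lemma \ref{Constants Lem s*ww'}) one computes directly $U_p(\chi_\emptyset)_j=(Z'_j+[\beta]\inv)\inv\in[\beta](1+\fm_0)$ for each $j$ — in particular no power of $p$ intervenes, consistently with Lemma \ref{Constants Lem Up'} and with $U_p(\chi_\emptyset)'_j=1$ for all $j$ — so $U_p(\chi_\emptyset)\in[\beta]^f(1+\fm_0)=[\xi](1+\fm_0)$. Specializing at the point of $\Spec R_0$ attached to $\pi$, the $1$‑unit goes to $1$ and what remains is $\xi$, whence $g_{\chi_\emptyset}=\xi$ and $\mu_{\emptyset,\emptyset}=(-1)^{f-1}\xi$. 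Alternatively one may simply quote the corresponding computation of \cite{DL21}, since all the data entering $\mu_{\emptyset,\emptyset}$ — the weights $\sigma_\emptyset,\sigma_{\emptyset^s}$, the character $\chi_{\emptyset^s}$, and the operator $\un{Y}^{\un{p}-\un{1}}\smat{p&0\\0&1}$ — are independent of $J_{\rhobar}$, so the value is the same whether or not $\rhobar$ is semisimple.

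The main obstacle is the step identifying $g_{\chi_\emptyset}$ with the specialization of $U_p(\chi_\emptyset)$: this is the heart of the Dotto--Le method and relies on knowing that at the relevant point of $\Spec R_0$ the $1$‑unit factor specializes to $1$ — equivalently, that the only non‑trivial scalar surviving the specialization is the unramified parameter $\xi$ — together with the bookkeeping of the $S_i$‑normalizations responsible for the sign $(-1)^{f-1}$. I expect no further input to be needed, since for $J=\emptyset$ the second source of Diamond‑diagram constants (the ``$c'_J$'' elements of \S\ref{Constants Sec c'J}) contributes trivially.
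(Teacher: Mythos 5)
Your proposal is correct and follows essentially the same route as the paper: the paper also reduces the lemma to showing that the scalar $g_{\chi_\emptyset}$ equals $\xi$, concluding via \eqref{Constants Eq gchiJ} with $J=\emptyset$ (your rederivation of that identity from \eqref{Constants Eq muJJ'} and \eqref{Constants Eq SvJs}, with $P_1(\chi_\emptyset)=1$, is exactly what that citation encapsulates). For the key step it likewise invokes the proof of \cite[Lemma~4.17]{DL21} to identify $g_{\chi_\emptyset}$ with the reduction modulo $\fm_0$ of $U_p(\chi_\emptyset)$, and computes $U_p(\chi_\emptyset)\in[\beta]^f(1+\fm_0)=[\xi](1+\fm_0)$ from Lemma \ref{Constants Lem Up'}, just as you do.
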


\begin{proof}
    By the proof of \cite[Lemma~4.17]{DL21}, the map $g_{\chi_{\emptyset}}$ is given by the reduction modulo $\fm_0$ of $U_p(\chi_{\emptyset})\in R_0$ (see \eqref{Constants Eq UpchiJ}), which equals $\xi$ by Lemma \ref{Constants Lem Up'} (and its proof). Then we conclude using \eqref{Constants Eq gchiJ} with $J=\emptyset$.
\end{proof}

\hspace{\fill}

We let $M_{\infty}$ be the $\cO[\GL_2(K)]$-module as in \cite[\S6.2]{DL21}. Then $M_{\infty}$ has a minimal arithmetic action of $R_{\infty}$ in the sense of \cite[\S4.2]{DL21}, where $R_{\infty}$ is a suitable power series ring over $R_{\rhobar}$, the universal framed $\cO$-deformation ring for $\rhobar$. In particular, we have $M_{\infty}^{\vee}[\fm_{\infty}]\cong\pi$ for $\pi$ as in \eqref{Constants Eq local factor}, where $(-)^{\vee}$ is the Pontrjagin dual and $\fm_{\infty}$ is the maximal ideal of $R_{\infty}$. We let $M_{\infty}(-)$ be the corresponding patching functor.

Let $\theta$ be a non-scalar tame type. For each Serre weight $\sigma\in\JH\bra{\ovl{\theta}}$ where $\ovl{\theta}$ is the reduction modulo $\varpi$ of any $\cO$-lattice in $\theta$, we fix a lattice $\theta^{\sigma}$ in $\theta$ with irreducible cosocle $\sigma$. Such a lattice is unique up to homothety and we rescale it when necessary. For $\chi:I\to\FF\x$ a character such that $\chi\neq\chi^s$, we write $\theta^{\chi}$ for $\theta^{\sigma(\chi)}$. We let $\op{pr}_{\chi^s}:\theta(\chi^s)^{R\chi^s}\to\sigma(R\chi^s)$ and $\op{pr}_{\chi}:\theta(\chi^s)^{R\chi}\to\sigma(R\chi)$ be the normalized surjections as in \cite[(23),(24)]{DL21}.

We fix a tame inertial type $\tau_0\eqdef\tau(w,\mu-w\eta)$ with associated tame type $\theta_0\eqdef R_w(\mu-w\eta)$ for some fixed $w\in\un{W}$ satisfying $w_j=\fw$ for $j\in J_{\rhobar}$ and $w_0w_1\cdots w_{f-1}=\fw$ if $J_{\rhobar}\neq\cJ$. Then we have $W(\rhobar)\subseteq\JH(\ovl{\theta}_0)$ by \cite[Prop.~3.11]{DL21} and $\theta_0$ is a cuspidal type if $J_{\rhobar}\neq\cJ$. Moreover, the ring $R_0$ defined above \eqref{Constants Eq Af-1-j} is a power series ring over $R_{\rhobar}^{\tau_0}$ (see \cite[\S3.5.1]{DL21}), where $R_{\rhobar}^{\tau_0}$ is the quotient of $R_{\rhobar}$ parametrizing potentially crystalline lifts of $\rhobar$ with Hodge--Tate weights $(1,0)$ in each embedding and inertial type $\tau_0$. In particular, all the arguments of \cite[\S4]{DL21} still hold, replacing the so-called central type $\theta=R_{\un{\fw}}\bra{\mu-\un{\fw}\eta}$ with the type $\theta_0$.

For any $\cO$-lattice $\theta_0\cc$ in $\theta_0$, the patched module $M_{\infty}(\theta_0\cc)$ is supported on $R_{\infty}(\tau_0)\eqdef R_{\infty}\otimes_{R_{\rhobar}}R_{\rhobar}^{\tau_0}$. We let $Q(\chi^s)^{R\chi^s}$ (resp.\,$Q(\chi^s)^{R\chi}$) be the quotient of $\theta(\chi^s)^{R\chi^s}/\varpi$ (resp.\,$\theta(\chi^s)^{R\chi}/\varpi$) as in \cite[Prop.~4.18]{DL21}. Then the surjection $\op{pr}_{\chi^s}$ (resp.\,$\op{pr}_{\chi}$) factors through $Q(\chi^s)^{R\chi^s}$ (resp.\,$Q(\chi^s)^{R\chi}$). If we fix a surjection $\alpha:\theta_0^{R\chi^s}\to Q(\chi^s)^{R\chi^s}$ which induces a surjection $\alpha:\theta_0^{R\chi}\to Q(\chi^s)^{R\chi}$, then as in \cite[(29)]{DL21} there is a commutative diagram
\begin{equation*}
\begin{tikzcd}
    M_{\infty}\bra{\theta_0^{R\chi}}\arrow[hook]{r}{\iota}\arrow{dr}[swap]{\wt{U}_p(\chi)}&M_{\infty}\bra{\theta_0^{R\chi^s}}\arrow{r}{\op{pr}_{\chi^s}\circ\alpha}\arrow[d,"\wt{h}_{\chi}","\cong"']&M_{\infty}\bra{\sigma(R\chi^s)}/\fm_{\infty}\arrow[d,"\ovl{h}_{\chi}","\cong"']\\
    &M_{\infty}\bra{\theta_0^{R\chi}}\arrow{r}{\op{pr}_{\chi}\circ\alpha}&M_{\infty}\bra{\sigma(R\chi^s)}/\fm_{\infty},
\end{tikzcd}
\end{equation*}
where we refer to \cite[\S4.4]{DL21} for the maps $\iota$, $\wt{h}_{\chi}$, $\ovl{h}_{\chi}$ and the element $\wt{U}_p(\chi)\in R_{\infty}(\tau_0)$. Moreover, by \cite[Lemma~5.5]{DL21} and the definition of $\wt{U}'_p(\chi)$ in \cite[Def.~3.22]{DL21}, we deduce that $\wt{U}_p(\chi)\wt{U}_p(\chi^s)$ is a product of an integer power of $p$ and a $1$-unit of $R_{\infty}(\tau_0)$ (i.e.\,an element of $1+\fm_{\infty}(\tau_0)$, where $\fm_{\infty}(\tau_0)$ is the maximal ideal of $R_{\infty}(\tau_0)$). In particular, $\wt{U}_p(\chi)\in R_{\infty}(\tau_0)[1/p]\x$.

For our purposes, we consider a cycle of characters but in a different order from that in \cite[\S4.5]{DL21}. Namely, we consider the $I$-characters $\psi_0,\psi_1,\ldots,\psi_n$ and $\psi'_0,\psi'_1,\ldots,\psi'_m$ appearing in $D_0(\rhobar)^{I_1}$ such that $R\psi_0=R\psi'_0$, $R\psi_n^s=R\psi_m^{\prime s}$, $R\psi_i^s=R\psi_{i+1}$ for $0\leq i\leq n-1$ and $R\psi_i^{\prime s}=R\psi'_{i+1}$ for $0\leq i\leq m-1$. Here we allow $m=-1$, in which case we are reduced to the situation considered in \cite[\S4.5]{DL21}. We fix a surjection $\alpha_0:\theta_0^{R\psi_0^s}\to Q(\psi_0^s)^{R\psi_0^s}$. We define the surjection $\alpha'_0:\theta_0^{R\psi_0^{\prime s}}\to Q(\psi_0^{\prime s})^{R\psi_0^{\prime s}}$ by the commutative diagram (if $m\geq0$)

\begin{equation*}
\begin{tikzcd}  
    \theta_0^{R\psi_{0}}\arrow{r}{\alpha_0}\arrow[equal]{d}&Q(\psi_{0}^s)^{R\psi_{0}}\arrow{r}{\op{pr}_{\psi_{0}}}&\sigma\bra{R\psi_{0}}\arrow[equal]{d}\\
    \theta_0^{R\psi_{0}'}\arrow{r}{\alpha_0'}&Q(\psi_0^{\prime s})^{R\psi_0'}\arrow{r}{\op{pr}_{\psi_{0}'}}&\sigma\bra{R\psi_{0}'}.
\end{tikzcd}
\end{equation*}
Then we define the surjections $\alpha_i:\theta_0^{R\psi_i^s}\to Q(\psi_i^s)^{R\psi_i^s}$ for $1\leq i\leq n$ inductively by the commutative diagram
\begin{equation*}
\begin{tikzcd}
    \theta_0^{R\psi_{i+1}}\arrow{r}{\alpha_{i+1}}\arrow[equal]{d}&Q(\psi_{i+1}^s)^{R\psi_{i+1}}\arrow{r}{\op{pr}_{\psi_{i+1}}}&\sigma\bra{R\psi_{i+1}}\arrow[equal]{d}\\
    \theta_0^{R\psi_{i}^s}\arrow{r}{\alpha_i}&Q(\psi_i^s)^{R\psi_i^s}\arrow{r}{\op{pr}_{\psi_{i}^s}}&\sigma\bra{R\psi_{i}^s},
\end{tikzcd}
\end{equation*}
and we define the surjections $\alpha_i':\theta_0^{R\psi_i^{\prime s}}\to Q(\psi_i^{\prime s})^{R\psi_i^{\prime s}}$ for $1\leq i\leq m$ inductively in a similar way. 

Analogous to the picture in \cite[\S4.5]{DL21}, we give a picture for $n=1$ and $m=0$.

\begin{equation*}
\begin{tikzcd}
    M_{\infty}\bra{\theta_0^{R\psi_0}}\arrow[hook]{r}\arrow{dr}[swap]{\wt{U}_p(\psi_1)}&M_{\infty}\bra{\theta_0^{R\psi_0^s}}\arrow[hook]{r}\arrow{dr}[swap]{\wt{U}_p(\psi_1)}&M_{\infty}\bra{\theta_0^{R\psi_1^s}}\arrow{r}{\op{pr}_{\psi_1^s}\circ\alpha_1}\arrow{d}{\wt{h}_{\psi_1}}&M_{\infty}\bra{\sigma(R\psi_1^s)}/\fm_{\infty}\arrow{d}{\ovl{h}_{\psi_1}}\\
    &M_{\infty}\bra{\theta_0^{R\psi_0}}\arrow[hook]{r}\arrow{dr}[swap]{\wt{U}_p(\psi_0)}&M_{\infty}\bra{\theta_0^{R\psi_0^s}}\arrow{r}{\op{pr}_{\psi_1}\circ\alpha_1}[swap]{\op{pr}_{\psi_0^s}\circ\alpha_0}\arrow{d}{\wt{h}_{\psi_0}}&M_{\infty}\bra{\sigma(R\psi_0^s)}/\fm_{\infty}\arrow{d}{\ovl{h}_{\psi_0}}\\
    & &M_{\infty}\bra{\theta_0^{R\psi_0}}\arrow{r}{\op{pr}_{\psi_0}\circ\alpha_0}[swap]{\op{pr}_{\psi_0'}\circ\alpha_0'}&M_{\infty}\bra{\sigma(R\psi_0)}/\fm_{\infty}\\
    &M_{\infty}\bra{\theta_0^{R\psi_0'}}\arrow[hook]{r}\arrow{ur}{\wt{U}_p(\psi_0')}&M_{\infty}\bra{\theta_0^{R\psi_0^{\prime s}}}\arrow{r}{\op{pr}_{\psi_0^{\prime s}}\circ\alpha_0'}\arrow{u}[swap]{\wt{h}_{\psi_0'}}&M_{\infty}\bra{\sigma(R\psi_0^{\prime s})}/\fm_{\infty}\arrow{u}[swap]{\ovl{h}_{\psi_0'}}.
\end{tikzcd}
\end{equation*}
Suppose that $\op{pr}_{\psi_n^s}\circ\alpha_n=c(\psi,\psi')\op{pr}_{\psi_m^{\prime s}}\circ\alpha_m'$ for $c(\psi,\psi')\in\FF\x$. Suppose that the composition
\begin{equation*}
    \ovl{h}_{\psi_n}\inv\!\circ\!\cdots\!\circ\!\ovl{h}_{\psi_0}\inv\!\circ\!\ovl{h}_{\psi_0'}\!\circ\!\cdots\!\circ\!\ovl{h}_{\psi_m'}:M_{\infty}\bra{\sigma(R\psi_n^s)}/\fm_{\infty}=M_{\infty}\bra{\sigma(R\psi_m^{\prime s})}/\fm_{\infty}\to M_{\infty}\bra{\sigma(R\psi_n^s)}/\fm_{\infty}
\end{equation*}
is given by multiplication by $h(\psi,\psi')\in\FF\x$. Analogous to \cite[(34)]{DL21}, there exists $\nu\in\ZZ$ such that the element
\begin{equation*}
    p^{-\nu}\bbra{\sprod_{i=0}^n\wt{U}_p(\psi_i)}\inv\!\bbra{\sprod_{i=0}^m\wt{U}_p(\psi_i')}\in R_{\infty}(\tau_0)[1/p]\x
\end{equation*}
lies in $R_{\infty}(\tau_0)$ and reduces to $h(\psi,\psi')c(\psi,\psi')\inv$ modulo $\fm_{\infty}(\tau_0)$.

\begin{example}\label{Constants Ex formula gammaJ}
    Suppose that $J_{\rhobar}\neq\cJ$. We let $J\subseteq\cJ$ such that $J\nsubseteq J_{\rhobar}$ and $J\neq J^*$ (see Lemma \ref{Constants Lem J*} for $J^*$). Then we take $n=\ell(J)-1$, $m=\ell(J^{\ss})-1$, $\psi_i=\chi_{\delta_{\ss}^{n-i}(J)}^s$ for $0\leq i\leq n$ and $\psi_i'=\chi_{\delta_{\ss}^{m-i}(J^{\ss})}^s$ for $0\leq i\leq m$, and write $c(J)$ for $c(\psi,\psi')\in\FF\x$. From the previous paragraph using \cite[Prop.~4.16]{DL21} and \cite[Lemma~5.5]{DL21} we deduce that $g(J)=U_p(J)c(J)$ (see \eqref{Constants Eq gammaJ and gJ} for $g(J)$ and \eqref{Constants Eq UpJ} for $U_p(J)$). Combining with \eqref{Constants Eq gammaJ and gJ} we conclude that (see \eqref{Constants Eq gammaJ} for $\gamma(J)$)
    \begin{equation}\label{Constants gamma Up c'}
        \gamma(J)=U_p(J)c'(J)
    \end{equation}
    with $c'(J)\eqdef(-1)^{f-1+\abs{J\cap(J-1)^{\nss}}}\mu_{J^s,J}\bigbra{P_2(J)/P_1(J)}c(J).$
\end{example}

\section{Computation of constants}\label{Constants Sec c'J}

Throughout this section, we suppose that $J_{\rhobar}\neq\cJ$, equivalently, $\rhobar$ is non-semisimple. We compute $c'(J)$ defined in \eqref{Constants gamma Up c'} for $J\nsubseteq J_{\rhobar}$ and $J\neq J^*$ (see Lemma \ref{Constants Lem J*} for $J^*$). The main results are Propostion \ref{Constants Prop c'J simple} and Proposition \ref{Constants Prop c'J new}. Together with the results of \S\ref{Constants Sec Kisin}, we finish the computation of all the constants in the diagram $(\pi^{I_1}\into\pi^{K_1})$ that we need.

\subsection{Relation between $S$-operators}

For $a\in\ZZ$, we define $a_j\in\set{0,1,\ldots,p-1}$ for $0\leq j\leq f-1$ by writing $a=\sum_{j=0}^{f-1}a_jp^j+Q(q-1)$ for some $Q\in\ZZ$ and we define $a_{q}\eqdef\sum_{j=0}^{f-1}a_jp^j\in\set{0,1,\ldots,q-2}$. If $(q-1)\nmid a$, we write $S_a$ (resp.\,$S^+_a$) for the operators $S_{a_q}$ (resp.\,$S^+_{a_q}$) defined in \eqref{Constants Eq operator S}. For $0\neq b\in K$, we let $u\in\ZZ$ be such that $b\in p^u\OK\x$, then we define the \textbf{leading term} of $b$ to be the element of $\Fq\x$ that is the reduction modulo $p$ of $p^{-u}b\in\OK\x$. 

For $a,b\in\ZZ$, we define
\begin{equation}\label{Constants Eq Jab}
\begin{aligned}
    u(a,b)&\eqdef(p-1)\inv\ssum_{j=0}^{f-1}\bigbra{p-1-\bra{a_j+b_j-(a+b)_j}}\in\ZZ;\\
    J(a,b)&\eqdef(-1)^{f-1+u(a,b)}\sprod_{j=0}^{f-1}\bbra{a_j!b_j!\bigbra{(a+b)_j!}\inv}\in\Fq\x.
\end{aligned}  
\end{equation}
More generally, for $a_1,\ldots,a_n\in\ZZ$, we define
\begin{equation}\label{Constants Eq Ja1an}
\begin{aligned}
    u(a_1,\ldots,a_n)&\eqdef(p-1)\inv\ssum_{j=0}^{f-1}\Bigbra{\ssum_{i=1}^n\bigbra{p-1-(a_i)_j}-\bigbra{p-1-\bra{\ssum_{i=1}^na_i}_j}}\in\ZZ;\\
    J\bra{a_1,\ldots,a_n}&\eqdef(-1)^{(n-1)(f-1)+u(a_1,\ldots,a_n)}\sprod_{j=0}^{f-1}\bbra{\bigbra{\sprod_{i=1}^n(a_i)_j!}\bigbra{\bra{\ssum_{i=1}^na_i}_j!}\inv}\in\Fq\x.
\end{aligned}
\end{equation}

\begin{lemma}\label{Constants Lem S+S+S+}
\begin{enumerate}
    \item 
    Let $0<a,b<q-1$ such that $a+b\neq q-1$. Then we have
    \begin{equation*}
        S^+_{a}S^+_{b}=\wt{J}(a,b)S^+_{a+b};\quad
        S_{a}S^+_{b}=\wt{J}(a,b)S_{a+b},
    \end{equation*}
    where $0\neq\wt{J}(a,b)\in\OK$ has leading term $J(a,b)$. 
    \item
    Let $0<a_1,\ldots,a_n<q-1$ such that $(q-1)$ does not divide $\sum_{i=1}^ka_i$ for all $1\leq k\leq n-1$. Then we have
    \begin{equation*}
    \begin{aligned}
        S^+_{a_1}\ldots S^+_{a_n}&=\wt{J}(a_1,\ldots,a_n)S^+_{a_1+\cdots+a_n}&\quad\text{if}~(q-1)\nmid\ssum_{i=1}^na_i,\\
        S^+_{a_1}\ldots S^+_{a_n}&=J'(a_1,\ldots,a_n)S^+_0+\wt{J}(a_1,\ldots,a_n)\smat{1&0\\0&1}&\quad\text{if}~(q-1)\mid\ssum_{i=1}^na_i,
    \end{aligned}
    \end{equation*}
    where $0\neq\wt{J}(a_1,\ldots,a_n)\in\OK$ has leading term $J(a_1,\ldots,a_n)$, and $J'(a_1,\ldots,a_n)\in\OK$.
\end{enumerate}
\end{lemma}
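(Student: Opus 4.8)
The plan is to reduce everything to a single computation of the product $S_a^+ S_b^+$ in the group algebra $\cO[\GL_2(\Fq)]$, and then bootstrap to the $n$-fold case by induction. First I would expand
\[
S_a^+ S_b^+ = \sum_{\lambda,\mu\in\Fq\x}[\lambda]^a[\mu]^b\pmat{1&0\\\lambda&1}\pmat{1&0\\\mu&1} = \sum_{\lambda,\mu}[\lambda]^a[\mu]^b\pmat{1&0\\\lambda+\mu&1},
\]
and reorganize the sum according to the value $\nu=\lambda+\mu$. When $\nu\neq0$ this yields $\sum_{\nu\in\Fq\x}\bigl(\sum_{\lambda+\mu=\nu}[\lambda]^a[\mu]^b\bigr)\smat{1&0\\\nu&1}$, while the $\nu=0$ terms contribute a multiple of the identity. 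The inner sum $\sum_{\lambda+\mu=\nu}[\lambda]^a[\mu]^b$, after substituting $\lambda=\nu x$, $\mu=\nu(1-x)$, factors as $[\nu]^{a+b}$ times a Jacobi-type sum $\sum_{x\neq 0,1}[x]^a[1-x]^b$ over $\Fq$; this is exactly the classical Jacobi sum whose value modulo $p$ is controlled by Stickelberger-type congruences — precisely the content of the formula for $J(a,b)$ in \eqref{Constants Eq Jab} in terms of the product of factorials $\prod_j a_j!\,b_j!\,((a+b)_j!)\inv$ and the carry count $u(a,b)$. The hypothesis $a+b\neq q-1$ (equivalently $(q-1)\nmid(a+b)$) ensures the Jacobi sum is a unit times $S_{a+b}^+$ with no leftover identity term, and that the exponent $a+b$ should be read modulo $q-1$. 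The identity $S_a S_b^+ = \wt J(a,b) S_{a+b}$ then follows by left-multiplying the first identity by $\smat{0&1\\1&0}\cdot(\text{diagonal torus})$, i.e.\ by the same manipulation that turns $S_i^+$ into $S_i$ in \eqref{Constants Eq operator S}; alternatively one expands $S_a S_b^+$ directly and runs the identical reorganization.

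For part (ii) I would induct on $n$. Set $b=a_1+\cdots+a_{n-1}$; the hypothesis that $(q-1)\nmid\sum_{i=1}^k a_i$ for $1\le k\le n-1$ means the inductive hypothesis applies to $S_{a_1}^+\cdots S_{a_{n-1}}^+$, giving $\wt J(a_1,\dots,a_{n-1})S_b^+$ (the case $(q-1)\mid b$ is excluded by the hypothesis at $k=n-1$, so no identity term appears at this stage). Then apply part (i) to $S_b^+ S_{a_n}^+$: if $(q-1)\nmid(b+a_n)=\sum_{i=1}^n a_i$ we get $\wt J(b,a_n)S_{b+a_n}^+$, and multiplicativity of the carry combinatorics — i.e.\ that $u(a_1,\dots,a_n)=u(a_1,\dots,a_{n-1})+u(b,a_n)$ and the factorials telescope — gives the leading term $J(a_1,\dots,a_n)$ as in \eqref{Constants Eq Ja1an}. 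If instead $(q-1)\mid\sum_{i=1}^n a_i$, then $b+a_n\equiv 0$, so I would instead compute $S_b^+ S_{a_n}^+$ directly from the expansion above: the $\nu=\lambda+\mu\neq 0$ terms again give a Jacobi-sum multiple of $S_0^+$ (this is $J'(a_1,\dots,a_n)$, which may vanish), and the $\nu=0$ terms, where $\mu=-\lambda$, contribute $\sum_{\lambda\in\Fq\x}[\lambda]^{b}[-\lambda]^{a_n}\cdot\mathrm{id} = \bigl(\sum_\lambda[\lambda]^{b+a_n}(-1)^{a_n}\bigr)\mathrm{id}$; since $(q-1)\mid(b+a_n)$ this sum is $[-1]^{a_n}(q-1)\equiv -[-1]^{a_n}\pmod p$, a unit, which after multiplying by $\wt J(a_1,\dots,a_{n-1})$ produces the stated $\wt J(a_1,\dots,a_n)\smat{1&0\\0&1}$ term. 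One checks the leading-term bookkeeping of $\wt J(a_1,\dots,a_n)$ is consistent with \eqref{Constants Eq Ja1an} in this case as well.

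The main obstacle is the arithmetic of the Jacobi sum over $\Fq$: one needs the mod-$p$ evaluation $\sum_{x\neq 0,1}[x]^a[1-x]^b \equiv (-1)^{f-1+u(a,b)}\prod_{j=0}^{f-1} a_j!\,b_j!\,((a+b)_j!)\inv \pmod{p}$, together with the precise normalization (sign $(-1)^{f-1}$, the carry count $u(a,b)$, and the $p$-adic valuation showing $\wt J(a,b)$ is a genuine element of $\cO_K$ with the asserted leading term rather than merely an element of $\FF$). This is a Gauss/Jacobi sum computation via the Gross--Koblitz or Stickelberger congruence applied coordinate-by-coordinate over the $f$ embeddings $\Fq\into\FF$; it is standard but must be done carefully so that the $n$-fold telescoping in part (ii) matches the exponents and signs in \eqref{Constants Eq Ja1an} exactly. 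Everything else — the reorganization of sums, the induction, and the reduction of the $S_a S_b^+$ case to the $S_a^+ S_b^+$ case — is routine bookkeeping once this evaluation is in hand. I expect this lemma is close to \cite[Lemma~5.4]{DL21} or its analogue, so I would look to adapt that argument, replacing the central type there with the cuspidal type $\theta_0$ and tracking the cuspidal-versus-principal-series distinction where the exponents are reduced modulo $q-1$.
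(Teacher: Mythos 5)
Your proposal is correct in substance, but it takes a different (more self-contained) route than the paper: the paper's proof of this lemma is a one-line citation — part (i) is \cite[Lemma~2.3, Lemma~2.4]{DL21}, the first formula of (ii) follows by the same induction you describe, and the second formula of (ii) is \cite[Prop.~2.5]{DL21} — whereas you rederive the cited inputs by expanding $S^+_aS^+_b$ in $\cO[\GL_2(\Fq)]$, regrouping along $\nu=\lambda+\mu$, and evaluating the resulting Jacobi sum $\sum_{x\neq0,1}[x]^a[1-x]^b$ via Stickelberger/Gross--Koblitz. That computation is exactly what lies behind the Dotto--Le statements, so your sketch reconstructs their argument rather than diverging from it; what your version buys is transparency about where the carry count $u(a,b)$, the sign $(-1)^{f-1}$, and the valuation of $\wt{J}(a,b)$ come from, at the cost of having to carry out the Jacobi-sum congruence carefully (which you flag but do not prove — acceptable, since the paper outsources precisely this point). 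Two small remarks: the reduction $S_aS^+_b=\wt{J}(a,b)S_{a+b}$ needs only $S_a=\smat{0&1\\1&0}S^+_a$, no torus element; and your closing guess that the relevant comparison is \cite[Lemma~5.4]{DL21} with a cuspidal-type modification is off target — this is a pure group-algebra identity from \S2 of \cite{DL21}, with no type involved. Finally, your "leading-term bookkeeping" in the degenerate case does check out under the paper's digit convention (a multiple of $q-1$ has all digits $0$): with $b=q-1-a_n$ one has $(b_j!)\inv\equiv(-1)^{(a_n)_j+1}(a_n)_j!$ and $q-1\equiv-1$, and the carry counts satisfy $u(a_1,\ldots,a_n)=u(a_1,\ldots,a_{n-1})$, so $\wt{J}(a_1,\ldots,a_{n-1})[-1]^{a_n}(q-1)$ indeed has leading term $J(a_1,\ldots,a_n)$ as in \eqref{Constants Eq Ja1an}.
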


\begin{proof}
    (i) is \cite[Lemma~2.3]{DL21} and \cite[Lemma~2.4]{DL21}. The first formula of (ii) follows from (i) by induction, and the second formula of (ii) is \cite[Prop.~2.5]{DL21}.
\end{proof}

\begin{lemma}\label{Constants Lem S+v}
    Let $v$ be an $H$-eigenvector in an $\cO[\GL_2(\Fq)]$-module with $H$-eigencharacter $\chi$. Then $S_iv$ has $H$-eigencharacter $\chi^s\alpha^{-i}$ and $S^+_iv$ has $H$-eigencharacter $\chi\alpha^i$, where $\alpha$ is the $H$-character $\smat{a&0\\0&d}\mapsto ad\inv$.
\end{lemma}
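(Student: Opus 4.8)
The plan is to reduce everything to a direct conjugation computation in the group algebra $\cO[\GL_2(\Fq)]$. First I would recall that $H=\smat{[\Fq\x]&0\\0&[\Fq\x]}$ acts on the $H$-eigenvector $v$ through the character $\chi$, so for $h=\smat{a&0\\0&d}$ with $a,d\in[\Fq\x]$ we have $hv=\chi(h)v$. The key observation is that $S_i$ and $S_i^+$ each transform in a controlled way under left multiplication by $h$: writing out $S^+_i=\sum_{\lambda\in\Fq\x}[\lambda]^i\smat{1&0\\\lambda&1}$, one computes $h\smat{1&0\\\lambda&1}=\smat{1&0\\\lambda&1}'h$ where the primed matrix has lower-left entry $d\inv a\lambda$ (here I am identifying $[\mu]$ with $\mu\in\Fq\x$ and using $ad\inv\in\Fq\x$), and similarly $h\smat{\lambda&1\\1&0}=\smat{\lambda'&1\\1&0}\smat{d&0\\0&a}$ with $\lambda'$ a suitable rescaling of $\lambda$. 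So I would carry out the substitution $\lambda\mapsto$ (rescaled $\lambda$) in the defining sums.

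Concretely, for $S^+_i$: using $hS^+_iv=\sum_{\lambda}[\lambda]^i\,h\smat{1&0\\\lambda&1}v=\sum_{\lambda}[\lambda]^i\smat{1&0\\(ad\inv)\lambda&1}hv=\chi(h)\sum_{\lambda}[\lambda]^i\smat{1&0\\(ad\inv)\lambda&1}v$, and then re-indexing by $\mu=(ad\inv)\lambda$ so that $[\lambda]^i=[(ad\inv)\inv\mu]^i=(ad\inv)^{-i}[\mu]^i$, one gets $hS^+_iv=\chi(h)(ad\inv)^{-i}S^+_iv$. Wait—I should double-check the direction of the twist: $h$ on the left of $\smat{1&0\\\lambda&1}$ produces lower-left entry $a\inv d\lambda$ or $d\inv a\lambda$ depending on the convention; I would verify this carefully and adjust the exponent sign accordingly so that the final answer reads $S^+_iv$ has $H$-eigencharacter $\chi\alpha^i$, i.e.\ $hS^+_iv=\chi(h)(ad\inv)^iS^+_iv=(\chi\alpha^i)(h)\,S^+_iv$. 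For $S_i$, the extra antidiagonal matrix $\smat{0&1\\1&0}$ built into $\smat{\lambda&1\\1&0}$ swaps the two diagonal entries of $h$, which is exactly the conjugation $\chi\mapsto\chi^s$, and the re-indexing contributes the $\alpha^{-i}$ factor; so $S_iv$ has $H$-eigencharacter $\chi^s\alpha^{-i}$.

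There is no real obstacle here: the only thing to be careful about is bookkeeping of left versus right actions and the resulting sign of the exponent of $\alpha$, together with the identification of $[\mu]$ with $\mu$ when passing to the character $\alpha:\smat{a&0\\0&d}\mapsto ad\inv$ valued in $\Fq\x$ (viewed in $\FF$ via $\sigma_0$). I would present the $S^+_i$ computation in full and remark that the $S_i$ case is identical after inserting the Weyl-element swap, which accounts for the passage from $\chi$ to $\chi^s$.
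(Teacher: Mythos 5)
Your proposal is correct, and it is essentially the paper's proof: the paper disposes of this lemma by citing \cite[Lemma~2.7]{DL21}, and the content of that cited lemma is exactly the conjugation-and-reindexing computation you describe. One bookkeeping point you already flagged: the intermediate identity is mis-stated (for $h=\smat{a&0\\0&d}$ one has $h\smat{1&0\\ \lambda&1}=\smat{1&0\\ da\inv\lambda&1}h$, not lower-left entry $ad\inv\lambda$), but with the correct substitution $[\lambda]=(ad\inv)[\mu]$ the factor $(ad\inv)^i$ appears and the eigencharacter is indeed $\chi\alpha^i$; likewise for $S_i$, where writing $\smat{\lambda&1\\1&0}=\smat{1&\lambda\\0&1}\smat{0&1\\1&0}$ shows the Weyl swap produces $\chi^s$ and the reindexing produces $\alpha^{-i}$, as claimed.
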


\begin{proof}
    This is \cite[Lemma~2.7]{DL21}.
\end{proof}

\begin{lemma}\label{Constants Lem SSv}
    Let $J\subseteq\cJ$ and $v$ be an $I$-eigenvector in an $\cO[\GL_2(\Fq)]$-module with $I$-character $[\chi_J]$. Let $a,b\in\ZZ$ such that $(q-1)$ does not divide $a,a-b,a-b-\un{s}^J$ (see \eqref{Constants Eq sJ} for $\un{s}^J$, which also denotes the integer $\ssum_{j=0}^{f-1}s^J_jp^j\in\ZZ$). Then we have (see \eqref{Constants Eq tJ} for $\un{t}^J$)
    \begin{equation*}
        S_aS_bv=(-1)^{\un{t}^J}\wt{J}\bra{a,-b-\un{s}^J}S_{a-b-\un{s}^J}v,
    \end{equation*}
    where $0\neq\wt{J}(a,-b-\un{s}^J)\in\OK$ has leading term $J(a,-b-\un{s}^J)$.
\end{lemma}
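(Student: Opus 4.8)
The plan is to reduce the composition $S_aS_b$ acting on an $I$-eigenvector with $I$-character $[\chi_J]$ to a single $S$-operator by inserting the intermediate $H$-eigencharacter bookkeeping from Lemma \ref{Constants Lem S+v} and then exploiting the relation $S_{a'}S^+_{b'}=\wt{J}(a',b')S_{a'+b'}$ from Lemma \ref{Constants Lem S+S+S+}(i). First I would recall that, since $v$ has $I$-character $[\chi_J]$ (in particular it is an $H$-eigenvector), a standard computation — as in \cite[Lemma~2.7]{DL21}, i.e.\ the analogue of Lemma \ref{Constants Lem S+v} on the full Iwahori — shows that $S_b v$ is again (up to nonzero scalar) an $H$-eigenvector on which the lower-triangular unipotent $I_1$ acts through a character determined by $\un{s}^J$; concretely one writes $S_b v$ in terms of an $S^+$-operator applied to a new $I$-eigenvector $v'$ with $I$-character obtained from $[\chi_J]$ by the Weyl flip and the twist by $\un{s}^J$. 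This is exactly the manipulation used to relate the two normalisations of the principal series in \cite[\S2]{DL21}; the hypothesis $(q-1)\nmid\un{s}^J$ (more precisely the three non-divisibility conditions in the statement) is what guarantees that none of the intervening operators degenerates to $S^+_0$ or $S_0$.

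Concretely, I would argue as follows. Using the identity $S_b=S^+_{-\un{s}^J}S_{b+\un{s}^J}$-type relation valid on the $[\chi_J]$-eigenspace — again this is the content of the computations behind \cite[Lemma~2.3, Lemma~2.4, Lemma~2.7]{DL21}, where one tracks how $\smat{\lambda&1\\1&0}$ interacts with the diagonal torus action through $\chi_J$ — one gets $S_aS_bv = S_aS^+_{-b-\un{s}^J}v''$ for a suitable $I$-eigenvector $v''$ proportional to $v$, where the sign $(-1)^{\un{t}^J}$ enters precisely from how $\smat{0&1\\1&0}$ acts through the $\un{t}^J$-part of $\chi_J=\chi_{\lambda_J}$ with $\lambda_J=(\un{s}^J+\un{t}^J,\un{t}^J)$. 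Then Lemma \ref{Constants Lem S+S+S+}(i), in the form $S_{a}S^+_{-b-\un{s}^J}=\wt{J}(a,-b-\un{s}^J)S_{a-b-\un{s}^J}$, collapses this to a single operator, with $\wt{J}(a,-b-\un{s}^J)\in\OK$ nonzero and of leading term $J(a,-b-\un{s}^J)$ by \eqref{Constants Eq Jab}. The non-divisibility hypotheses on $a$, $a-b$ and $a-b-\un{s}^J$ are exactly what is needed for each $S$- or $S^+$-operator appearing to be defined in the sense of \S\ref{Constants Sec c'J} (i.e.\ indexed by a class that is not $0$ modulo $q-1$) and for Lemma \ref{Constants Lem S+S+S+}(i) to apply.

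The main obstacle I expect is keeping the torus/normalisation bookkeeping exactly right: the $S$-operators are defined with the Techm\"uller lift $[\lambda]$ and with a fixed ordering of the matrix entries, so the precise character by which $H$ acts on $v$ versus on $S_bv$ (and the resulting exponent shift by $\un{s}^J$ rather than, say, $\un{s}^{J^s}$ or $-\un{s}^J$), together with the precise sign $(-1)^{\un{t}^J}$, has to be extracted carefully from $\chi_J = (\ovl a)^{\un{s}^J+\un{t}^J}(\ovl d)^{\un{t}^J}$ and from the action of $\smat{0&1\\1&0}$. I would handle this by first doing the case $f=1$ explicitly — where $\chi_J$ is a genuine character of $\FF_p\x\times\FF_p\x$ and the computation is the classical $\GL_2(\FF_p)$ identity — to pin down the sign and the shift, and then noting that the general case is the evident $\sigma_j$-componentwise product, so the exponents add up coordinatewise and the signs multiply, which is what produces $(-1)^{\un{t}^J}$ and the multi-index leading term $J(a,-b-\un{s}^J)=(-1)^{f-1+u(a,-b-\un{s}^J)}\sprod_j\bigl(a_j!\,(-b-\un{s}^J)_j!\,((a-b-\un{s}^J)_j!)\inv\bigr)$. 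Alternatively, and perhaps more cleanly, one can cite \cite[Lemma~2.7]{DL21} (= our Lemma \ref{Constants Lem S+v}) to reduce $S_aS_bv$ to $S_aS^+_cv'$ mechanically and then invoke only Lemma \ref{Constants Lem S+S+S+}(i), so that the sole genuinely new bit is identifying $c=-b-\un{s}^J$ and the overall sign $(-1)^{\un{t}^J}$, both of which follow from the explicit shape \eqref{Constants Eq Lem s*ww' 1} of $\lambda_J$.
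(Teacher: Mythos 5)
The paper's own proof of this lemma is a one-line citation --- it is \cite[Prop.~2.8]{DL21} applied with $\chi=\chi_J$ --- so your proposal amounts to unpacking that citation, and the route you sketch is indeed the computation underlying it: on the $[\chi_J]$-eigenspace one has the single rewriting $S_bv=(-1)^{\un{t}^J}S^+_{-(b+\un{s}^J)}v$, coming from the decomposition $\smat{\mu&1\\1&0}=\smat{1&0\\\mu^{-1}&1}\smat{\mu&0\\0&-\mu^{-1}}\smat{1&\mu^{-1}\\0&1}$, the triviality of $[\chi_J]$ on the unipotent, and $[\chi_J]\bigbra{\smat{\mu&0\\0&-\mu^{-1}}}=(-1)^{\un{t}^J}[\mu]^{\un{s}^J}$; Lemma \ref{Constants Lem S+S+S+}(i) then collapses $S_aS^+_{-(b+\un{s}^J)}$ to $\wt{J}(a,-b-\un{s}^J)S_{a-b-\un{s}^J}$, giving exactly the stated sign and index. (The literal relation ``$S_b=S^+_{-\un{s}^J}S_{b+\un{s}^J}$'' you write in passing is not the one you want --- a composite of two operators would introduce an extra Jacobi factor --- but your displayed conclusion $S_aS_bv=S_aS^+_{-b-\un{s}^J}v''$ is the correct statement, so this is only a slip of formulation.)

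The genuine issue is the hypothesis bookkeeping, which you assert rather than verify. Lemma \ref{Constants Lem S+S+S+}(i) requires both indices to be nonzero modulo $q-1$, so your argument needs $(q-1)\nmid(b+\un{s}^J)$; this is \emph{not} implied by the three stated conditions (which constrain $a$, $a-b$ and $a-b-\un{s}^J$), so your claim that the stated conditions are ``exactly what is needed'' to keep the intermediate operator away from $S^+_0$ is not correct, and conversely the stated hypothesis $(q-1)\nmid(a-b)$ is never used in your argument. As written, your plan proves the lemma under a different set of hypotheses than the one in the statement. The gap is easy to close: if $(q-1)\mid(b+\un{s}^J)$ then $S_bv=(-1)^{\un{t}^J}S^+_0v$, and a direct computation in $\cO[\GL_2(\Fq)]$ gives $S_aS^+_0=-S_a$ whenever $(q-1)\nmid a$, which is consistent with the statement because in that case all digits of $-b-\un{s}^J$ vanish, $u(a,-b-\un{s}^J)=f$ and hence $J(a,-b-\un{s}^J)=-1$; alternatively one can check that in every place the lemma is invoked (Lemma \ref{Constants Lem pr Hu} and the proof of Lemma \ref{Constants Lem pr new}) one has $(q-1)\nmid(b+\un{s}^J)$, using the genericity of $\un{r}$, the nonvanishing of $i(\chi_J^s)$ and Lemma \ref{Constants Lem ichiJ+sJ}. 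Either way, this degenerate case must be addressed explicitly for your proof to match the lemma as stated.
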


\begin{proof}
    This is \cite[Prop.~2.8]{DL21} with $\chi=\chi_J$.
\end{proof}

\begin{lemma}\label{Constants Lem Jab}
    Suppose that $J\neq J^*$. Then we have (see \eqref{Constants Eq P1} for $i(\chi_J^s)$ and see \eqref{Constants Eq various J} for $J^{\delta}$)
    \begin{equation*}
        J\bigbra{i(\chi_J^s),-\un{s}^J}=(-1)^{1+\sum_{j+1\in J^{\delta}}s^{(J-1)^{\ss}}_j}\bbbra{\frac{\prod_{j+1\in J^{\delta},j\notin J^{\delta}}(-1)\bigbra{s^{(J-1)^{\ss}}_j+1}}{\prod_{j+1\notin J^{\delta},j\in J^{\delta}}\bigbra{s^{(J-1)^{\ss}}_j+1}}}.
    \end{equation*}
\end{lemma}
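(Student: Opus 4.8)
The statement is a purely combinatorial identity relating the quantity $J(i(\chi_J^s),-\un{s}^J)$ (defined via the formula \eqref{Constants Eq Jab}) to an explicit product of factorial-type terms indexed by the "boundary" set $J^{\delta}=J\Delta(J-1)^{\ss}$. The plan is to unwind both sides explicitly, embedding index by embedding index. First I would recall from \eqref{Constants Eq P1} that $i(\chi_J^s)=\sum_{j+1\in J^{\delta}}\bigl(p-1-s^{(J-1)^{\ss}}_j\bigr)p^j$, so that the $j$-th digit $\bigl(i(\chi_J^s)\bigr)_j$ equals $p-1-s^{(J-1)^{\ss}}_j$ if $j+1\in J^{\delta}$ and $0$ otherwise; similarly the $j$-th digit of $-\un{s}^J$ (reduced mod $q-1$) is $p-1-s^J_j$ if $s^J_j>0$, with the usual carry bookkeeping, and here one uses $J\ne J^*$ together with Lemma \ref{Constants Lem J*} to guarantee that $q-1$ does not divide $\un{s}^J$, $i(\chi_J^s)$, or their difference, so that Lemma \ref{Constants Lem S+S+S+} and the definition \eqref{Constants Eq Jab} of $J(a,b)$ apply without degenerate cases.

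The core computation is then to evaluate, for each $j\in\cJ$, the triple
\[
a_j!\,b_j!\,\bigl((a+b)_j!\bigr)^{-1}\quad\text{with}\quad a=i(\chi_J^s),\ b=-\un{s}^J,
\]
and to track the sign $(-1)^{f-1+u(a,b)}$ coming from the carry count $u(a,b)$. Using Lemma \ref{Constants Lem compare sJ}(i) — which compares $s^J_j$ with $s^{(J-1)^{\ss}}_j$ according to whether $j$ and $j+1$ lie in $J^{\delta}=J\Delta(J-1)^{\ss}$ — one can rewrite $b_j=p-1-s^J_j$ (or the appropriate carried value) in terms of $s^{(J-1)^{\ss}}_j$. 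The four cases $(j\in J^{\delta}?,\ j+1\in J^{\delta}?)$ should each collapse: when $j+1\notin J^{\delta}$ the $a$-digit is $0$ so the factor is trivial and contributes nothing beyond a possible carry; when $j+1\in J^{\delta}$, after substituting the formula from Lemma \ref{Constants Lem compare sJ}(i) one gets $a_j+b_j$ either equal to $p-1$ (no carry, factor $=1$ up to sign) or equal to $s^{(J-1)^{\ss}}_j$ shifted by $\pm1$, producing the factor $\pm\bigl(s^{(J-1)^{\ss}}_j+1\bigr)^{\pm1}$ exactly as in the two products on the right-hand side, the numerator product being over $j+1\in J^{\delta},\,j\notin J^{\delta}$ and the denominator over $j+1\notin J^{\delta},\,j\in J^{\delta}$. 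The genericity bounds on $r_j$ in \eqref{Constants Eq rbarv} ensure all the relevant digits lie strictly between $0$ and $p-1$, so no further carries intrude.

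The main obstacle, as usual in these digit computations, is the sign and carry accounting: one must show that the carry number $u(a,b)$ equals $\sum_{j+1\in J^{\delta},\,j\in J^{\delta}}1$ (the indices where $a_j+b_j\ge p$, i.e.\ where a carry is generated) plus the contribution making the total exponent match $1+\sum_{j+1\in J^{\delta}}s^{(J-1)^{\ss}}_j$ modulo $2$, and that the leftover $(-1)^{f-1}$ from the definition \eqref{Constants Eq Jab} combines correctly. I would organize this by introducing the partition $\cJ=\{j:j,j+1\in J^{\delta}\}\sqcup\{j:j\in J^{\delta},j+1\notin J^{\delta}\}\sqcup\{j:j\notin J^{\delta},j+1\in J^{\delta}\}\sqcup\{j:j,j+1\notin J^{\delta}\}$, computing the per-$j$ contribution to the factorial product, to the carry count $u(a,b)$, and to the sign on each piece, then multiplying; since $|\{j\in J^{\delta}:j+1\notin J^{\delta}\}|=|\{j\notin J^{\delta}:j+1\in J^{\delta}\}|$ (both count the "descents" of the indicator of $J^{\delta}$ around the cycle $\ZZ/f\ZZ$), the parity bookkeeping closes up. This is entirely a case-by-case examination of the type already invoked repeatedly in this section (cf.\ the proof of Lemma \ref{Constants Lem compare sJ}), and I would present it as such rather than writing out all sixteen sub-cases.
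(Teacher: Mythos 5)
Your plan follows essentially the same route as the paper's proof: write out the base-$p$ digits of $a=i(\chi_J^s)$ and $b=-\un{s}^J$, convert $s^J_j$ into $s^{(J-1)^{\ss}}_j$ via Lemma \ref{Constants Lem compare sJ}(i), partition $\cJ$ into the four cases according to whether $j$ and $j+1$ lie in $J^{\delta}$, and close the sign bookkeeping using $\#\{j:j\in J^{\delta},\,j+1\notin J^{\delta}\}=\#\{j:j\notin J^{\delta},\,j+1\in J^{\delta}\}$ together with \eqref{Constants Eq factorial}; carried out, this is exactly the computation in the paper. Two details of your sketch are off and must be corrected when writing it up: the case $j+1\notin J^{\delta}$, $j\in J^{\delta}$ is not ``trivial'' --- there $a_j=0$, $b_j=p-2-s^{(J-1)^{\ss}}_j$ and $(a+b)_j=p-1-s^{(J-1)^{\ss}}_j$, and this case is precisely what produces the denominator factors $\bigbra{s^{(J-1)^{\ss}}_j+1}$ (up to sign); and $u(a,b)$ is not the number of indices with $a_j+b_j\geq p$ (the addition is modulo $q-1$, so there is no per-digit carry chain), the direct evaluation of \eqref{Constants Eq Jab} giving $u(a,b)=f-\abs{J^{\delta}}$. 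Finally, no appeal to Lemma \ref{Constants Lem S+S+S+} or to non-divisibility by $q-1$ is needed, since the identity is purely arithmetic; the hypothesis $J\neq J^*$ enters only through $J^{\delta}\neq\cJ$, which is what pins down the digits $(a+b)_j$ in the reduction modulo $q-1$.
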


\begin{proof}
    We write $a\eqdef i(\chi_J^s)\in\ZZ$ and $b\eqdef-\un{s}^J\in\ZZ$ so that $a_j=\delta_{j+1\in J^{\delta}}\bigbra{p-1-s^{(J-1)^{\ss}}_j}$ and $b_j=p-1-s^J_j$ for each $j\in\cJ$. By Lemma \ref{Constants Lem compare sJ}(ii) we have
    \begin{equation}\label{Constants Eq Lem Jab 1}
        a_j+b_j=
    \begin{cases}
        \bigbra{p-1-s^{(J-1)^{\ss}}_j}+\bigbra{p-1-s^J_j}=p-\delta_{j\in J^{\delta}}&\text{if}~j+1\in J^{\delta}\\
        p-1-s^J_j=p-1-s^{(J-1)^{\ss}}_j-\delta_{j\in J^{\delta}}&\text{if}~j+1\notin J^{\delta},
    \end{cases}
    \end{equation}
    hence we have
    \begin{equation}\label{Constants Eq Lem Jab 2}
    \begin{aligned}
        a+b&\equiv\ssum_{j=0}^{f-1}(a_j+b_j)p^j\\
        &=\ssum_{j+1\in J^{\delta}}\bigbra{p-\delta_{j\in J^{\delta}}}p^j+\ssum_{j+1\notin J^{\delta}}\bigbra{p-1-s^{(J-1)^{\ss}}_j-\delta_{j\in J^{\delta}}}p^j\\
        &=\bbra{\ssum_{j+1\notin J^{\delta}}\bigbra{p-1-s^{(J-1)^{\ss}}_j}p^j}+\bbra{\ssum_{j+1\in J^{\delta}}p^{j+1}-\ssum_{j\in J^{\delta}}p^j}\\
        &\equiv\ssum_{j+1\notin J^{\delta}}\bigbra{p-1-s^{(J-1)^{\ss}}_j}p^j\quad\mod~(q-1),
    \end{aligned}
    \end{equation}
    which implies $(a+b)_j=0$ if $j+1\in J^{\delta}$ and $(a+b)_j=p-1-s^{(J-1)^{\ss}}_j$ if $j+1\notin J^{\delta}$.
    
    Then by \eqref{Constants Eq Jab} using \eqref{Constants Eq Lem Jab 1} and \eqref{Constants Eq Lem Jab 2} we have
    \begin{align*}
        u(a,b)&=(p-1)\inv\ssum_{j=0}^{f-1}\bigbra{p-1-(a_j+b_j-(a+b)_j)}\\
        &=(p-1)\inv\bbra{\ssum_{j+1\notin J^{\delta}}(p-1+\delta_{j\in J^{\delta}})+\ssum_{j+1\in J^{\delta}}\bigbra{\!-\!1+\delta_{j\in J^{\delta}}}}\\
        &=(p-1)\inv\bbra{(p-1)\#\set{j:j+1\notin J^{\delta}}+\#\set{j:j\in J^{\delta}}-\#\set{j:j+1\in J^{\delta}}}\\
        &=f-\bigabs{J^{\delta}},
    \end{align*}
    and
    \begin{align*}
        &J\bigbra{i(\chi_J^s),-\un{s}^J}=(-1)^{f-1+u(a,b)}\sprod_{j=0}^{f-1}\bbra{a_j!b_j!\bigbra{(a+b)_j!}\inv}\\
        &\hspace{1cm}=(-1)^{f-1+u(a,b)}\bbbra{\frac{\sprod_{j+1\in J^{\delta},j\in J^{\delta}}\bigbra{a_j!(p-1-a_j)!}\sprod_{j+1\in J^{\delta},j\notin J^{\delta}}\bigbra{a_j!(p-a_j)!}}{\sprod_{j+1\notin J^{\delta},j\in J^{\delta}}(a+b)_j}}\\
        &\hspace{1cm}=(-1)^{f-1+f-|J^{\delta}|+\sum_{j+1\in J^{\delta}}\bra{s^{(J-1)^{\ss}}_j+1}}\bbbra{\frac{\sprod_{j+1\in J^{\delta},j\notin J^{\delta}}\bigbra{s^{(J-1)^{\ss}}_j+1}}{\sprod_{j+1\notin J^{\delta},j\in J^{\delta}}\bigbra{p-1-s^{(J-1)^{\ss}}_j}}}\\
        &\hspace{1cm}=(-1)^{1+\sum_{j+1\in J^{\delta}}s^{(J-1)^{\ss}}_j}\bbbra{\frac{\prod_{j+1\in J^{\delta},j\notin J^{\delta}}(-1)\bigbra{s^{(J-1)^{\ss}}_j+1}}{\prod_{j+1\notin J^{\delta},j\in J^{\delta}}\bigbra{s^{(J-1)^{\ss}}_j+1}}},
    \end{align*}
    where the third equality follows from \eqref{Constants Eq factorial} and the last equality uses $\#\set{j:j+1\in J^{\delta},j\notin J^{\delta}}=\#\set{j:j+1\notin J^{\delta},j\in J^{\delta}}$.
\end{proof}

\begin{lemma}\label{Constants Lem ichiJ+sJ}
    Let $J\nsubseteq J_{\rhobar}$. Then for each $j\in\cJ$ we have (see \eqref{Constants Eq P2} for $i(\chi_J)$)
    \begin{equation*}
        \bigbra{i(\chi_J)+\un{s}^J}_j=\delta_{j+1\in J^{\nss}}\bigbra{p-1-s^{J^{\ss}}_j}.
    \end{equation*}
\end{lemma}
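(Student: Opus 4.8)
The plan is to verify the claimed formula $\bigl(i(\chi_J)+\un{s}^J\bigr)_j=\delta_{j+1\in J^{\nss}}\bigl(p-1-s^{J^{\ss}}_j\bigr)$ for each $j\in\cJ$ by a direct $p$-adic digit computation, exactly parallel to the proof of Lemma \ref{Constants Lem Jab}. Recall from \eqref{Constants Eq P2} that $i(\chi_J)=\ssum_{j+1\notin J^{\nss}}\bigl(p-1-s^{J^{\ss}}_j\bigr)p^j$, so its $j$-th digit is $\delta_{j+1\notin J^{\nss}}\bigl(p-1-s^{J^{\ss}}_j\bigr)$ (using the genericity bound $0\leq s^{J^{\ss}}_j\leq p-1$ which makes this a legitimate digit), and $\un{s}^J$ has $j$-th digit $s^J_j$. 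The content is therefore to compute the digits of the integer sum $i(\chi_J)+\un{s}^J$ modulo $q-1$, keeping track of carries.

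First I would split into the two cases $j+1\in J^{\nss}$ and $j+1\notin J^{\nss}$ according to whether the digit of $i(\chi_J)$ at position $j$ vanishes. When $j+1\notin J^{\nss}$, the naive digit sum at position $j$ is $\bigl(p-1-s^{J^{\ss}}_j\bigr)+s^J_j$; I would invoke Lemma \ref{Constants Lem compare sJ}(ii) to rewrite $s^J_j$ in terms of $s^{J^{\ss}}_j$, distinguishing $j+1\in J^{\nss}$ versus $j+1\notin J^{\nss}$ — but since we are in the subcase $j+1\notin J^{\nss}$, we get $s^J_j=s^{J^{\ss}}_j+\delta_{j\in J^{\nss}}$, so the digit sum is $p-1+\delta_{j\in J^{\nss}}$, producing a carry precisely when $j\in J^{\nss}$ and leaving residual digit $0$ (or $p-1$ with no carry) — this matches the target value $0$ after accounting for the carry into position $j+1$. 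When $j+1\in J^{\nss}$, the digit of $i(\chi_J)$ at position $j$ is $0$, so the naive sum is just $s^J_j$; here Lemma \ref{Constants Lem compare sJ}(ii) gives $s^J_j=p-2-s^{J^{\ss}}_j+\delta_{j\in J^{\nss}}$ since $j+1\in J^{\nss}$, and the incoming carry from position $j-1$ (present iff $j\in J^{\nss}$) bumps this up, yielding digit $p-1-s^{J^{\ss}}_j$ exactly as claimed.

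The bookkeeping that makes this clean is to track the carry sequence globally: a carry is generated out of position $j$ exactly when $j\in J^{\nss}$, and these carries propagate forward by one step each, so position $j$ receives an incoming carry exactly when $j\in J^{\nss}$ as well — the carries do not cascade because after adding the carry the digit at position $j+1$ stays within $\{0,\ldots,p-1\}$ (this is where the genericity of the $r_j$ and the resulting bounds on $s^{J^{\ss}}_j$ are used). Since $J^{\nss}\neq\cJ$ is not actually needed for this digit identity (unlike for the factorial manipulations elsewhere), I would simply note the computation goes through for any $J\nsubseteq J_{\rhobar}$, the hypothesis ensuring $J^{\nss}$ is a proper subset being automatic; the identity is stated digit-by-digit so no issue with the $q-1$ reduction arises as long as the total carry out of position $f-1$ matches the carry into position $0$, which it does by the cyclic consistency of "carry out of $j$ iff $j\in J^{\nss}$."

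The main obstacle — really the only place requiring care rather than routine algebra — is confirming that no secondary carries are produced, i.e. that after adding the single incoming carry the digit at each position remains a valid base-$p$ digit, and that the carry pattern is genuinely $1$-periodic around $\ZZ/f\ZZ$ so the equality holds as an identity of residues mod $q-1$ rather than merely as an equality of integers. I expect this to follow immediately from the generic range $\max\{12,2f+1\}\le r_j\le p-\max\{15,2f+3\}$ in \eqref{Constants Eq rbarv}, which forces $1\le s^{J^{\ss}}_j\le p-2$ comfortably, but I would state it explicitly as the one substantive verification. Everything else reduces to a case-by-case check of the type already deferred as "an exercise" in Lemma \ref{Constants Lem compare sJ}, so I would present the proof as: apply Lemma \ref{Constants Lem compare sJ}(ii), compute digits with carries in the two cases above, and conclude.
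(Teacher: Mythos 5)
Your strategy (read off digits, apply Lemma \ref{Constants Lem compare sJ}(ii), add with carries) is in substance the computation the paper performs, but the carry bookkeeping --- which you correctly single out as the only substantive point --- is wrong, and as stated the verification does not close. First, your two assertions are mutually inconsistent: a carry generated out of position $j$ enters position $j+1$, so ``carry out of $j$ iff $j\in J^{\nss}$'' and ``carry into $j$ iff $j\in J^{\nss}$'' cannot both hold. Second, the pattern is in fact the opposite of what you claim: writing $d_j$ for the bare digit sum, $c_j\eqdef\delta_{j+1\in J^{\nss}}\bigbra{p-1-s^{J^{\ss}}_j}$ for the target digit and $\epsilon_j\in\set{0,1}$ for the carry entering position $j$, the relations $d_j+\epsilon_j=c_j+p\,\epsilon_{j+1}$ force $\epsilon_j=\delta_{j\notin J^{\nss}}$ for every $j$ (cyclically consistent), i.e.\ position $j$ receives a carry exactly when $j\notin J^{\nss}$. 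You can see your version fail already in your own case $j+1\in J^{\nss}$: the bare sum is $p-2-s^{J^{\ss}}_j+\delta_{j\in J^{\nss}}$, and adding a carry present iff $j\in J^{\nss}$ gives $p-2-s^{J^{\ss}}_j+2\delta_{j\in J^{\nss}}$, which never equals the target $p-1-s^{J^{\ss}}_j$; only a carry present iff $j\notin J^{\nss}$ produces it.

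Moreover the claim that carries do not cascade is false, and the cascade is exactly what rescues the remaining positions: if $j\notin J^{\nss}$ and $j+1\notin J^{\nss}$, the bare sum at $j$ is $p-1$ while the target digit is $0$, so the incoming carry must push it to $p$ and be passed on. Concretely, take $f=3$, $J_{\rhobar}=\set{0}$, $J=\set{2}$, so $J^{\nss}=\set{2}$, $J^{\ss}=\emptyset$: then $i(\chi_J)+\un{s}^J=(p-1)+(p-2-r_1)p+p\cdot p^2$, the overflow at position $2$ wraps around to position $0$, whose sum $p-1$ then overflows into position $1$, giving digits $(0,\,p-1-r_1,\,0)$ as the lemma asserts; with your carry pattern position $0$ would retain the digit $p-1\neq0$. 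The argument is repairable: either fix the carry pattern to $\epsilon_j=\delta_{j\notin J^{\nss}}$ and verify its cyclic consistency and that the resulting digit string is not identically $p-1$, or do what the paper does and avoid carries altogether by computing the whole sum modulo $q-1$, where after substituting Lemma \ref{Constants Lem compare sJ}(ii) the terms regroup as $\ssum_{j+1\notin J^{\nss}}p^{j+1}-\ssum_{j\notin J^{\nss}}p^j\equiv0$, leaving $\ssum_{j+1\in J^{\nss}}\bigbra{p-1-s^{J^{\ss}}_j}p^j$ directly. Two minor points: $J\nsubseteq J_{\rhobar}$ says $J^{\nss}\neq\emptyset$, not that $J^{\nss}$ is proper; and genericity is used only to guarantee $s^{J^{\ss}}_j>0$ (so the final representative is not $q-1$, e.g.\ when $J^{\nss}=\cJ$), not to rule out ``secondary carries''.
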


\begin{proof}
    This follows from the computation
    \begin{align*}
        i(\chi_J)+\un{s}^J&\equiv\ssum_{j=0}^{f-1}\bigbra{i(\chi_J)_j+s^J_j}p^j\\
        &=\ssum_{j+1\notin J^{\nss}}\bigbra{p-1-s^{J^{\ss}}_j+s^J_j}p^j+\ssum_{j+1\in J^{\nss}}s^J_jp^j\\
        &=\ssum_{j+1\notin J^{\nss}}\bigbra{p-\delta_{j\notin J^{\nss}}}p^j+\ssum_{j+1\in J^{\nss}}\bigbra{p-1-s^{J^{\ss}}_j-\delta_{j\notin J^{\nss}}}p^j\\
        &=\bbra{\ssum_{j+1\in J^{\nss}}\bigbra{p-1-s^{J^{\ss}}_j}p^j}+\bbra{\ssum_{j+1\notin J^{\nss}}p^{j+1}-\ssum_{j\notin J^{\nss}}p^j}\\
        &\equiv\ssum_{j+1\in J^{\nss}}\bigbra{p-1-s^{J^{\ss}}_j}p^j\quad\mod~(q-1),
    \end{align*}
    where the second equality follows from Lemma \ref{Constants Lem compare sJ}(ii).
\end{proof}

\subsection{The case \texorpdfstring{$(J-1)^{\ss}=J^{\ss}$}{.}}

\begin{lemma}\label{Constants Lem pr Hu}
    Let $\emptyset\neq J\subseteq\cJ$ such that $J\neq J^*$ and $(J-1)^{\ss}=J^{\ss}$ (which implies $J\nsubseteq J_{\rhobar}$ and $J^{\nss}\neq\cJ$). Then we have (see \S\ref{Constants Sec Kisin} for $\theta\cc(\chi_J)$ and $\varphi^{\chi_J}$)
    \begin{equation*}
        S_{i(\chi_J^s)}S_0\varphi^{\chi_J}=\wt{c}(\chi_J)S_{i(\chi_J)}\varphi^{\chi_J}\quad\text{in}~\theta\cc(\chi_J),
    \end{equation*}
    where $0\neq\wt{c}(\chi_J)\in\OK$ has leading term $c(\chi_J)\eqdef(-1)^{\un{t}^J}J\bigbra{i(\chi_J^s),-\un{s}^J}$.
\end{lemma}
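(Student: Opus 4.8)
The plan is to reduce the identity in $\theta\cc(\chi_J)$ to a purely combinatorial computation with the $S$-operators, using the relations collected in Lemmas \ref{Constants Lem S+S+S+}--\ref{Constants Lem Jab}. First I would recall that since $(J-1)^{\ss}=J^{\ss}$ and $J\neq J^*$, the hypotheses of the $S$-operator lemmas are met: $J^{\nss}\neq\cJ$ guarantees $\theta\cc(\chi_J)$ has cosocle $\sigma(\chi_J)=\sigma_J$ with $\varphi^{\chi_J}$ its canonical generator, and $J\nsubseteq J_{\rhobar}$ is automatic. The element $\varphi^{\chi_J}\in\theta\cc(\chi_J)$ is an $I$-eigenvector with character $[\chi_J]$, so Lemma \ref{Constants Lem SSv} applies directly to $S_a S_b\varphi^{\chi_J}$ for admissible $a,b$.

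\textbf{Main steps.} (1) Apply Lemma \ref{Constants Lem SSv} with $a=i(\chi_J^s)$ and $b=0$: I need to check that $(q-1)$ divides none of $i(\chi_J^s)$, $i(\chi_J^s)-0=i(\chi_J^s)$, and $i(\chi_J^s)-\un{s}^J$. The first two are the same condition and follow from the explicit formula \eqref{Constants Eq P1} for $i(\chi_J^s)$ together with $J\neq J^*$ (which via Lemma \ref{Constants Lem J*} says $J\Delta(J-1)^{\ss}\neq\cJ$, so $i(\chi_J^s)$ is a nonzero digit vector that is not $\un{p}-\un{1}$); the third reduces, after using $(J-1)^{\ss}=J^{\ss}$ and Lemma \ref{Constants Lem compare sJ}(ii), to a digit computation showing $i(\chi_J^s)-\un{s}^J\equiv -(a+b)$ as computed in the proof of Lemma \ref{Constants Lem Jab}, which is again not $\equiv 0$. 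Then Lemma \ref{Constants Lem SSv} gives
\begin{equation*}
    S_{i(\chi_J^s)}S_0\varphi^{\chi_J}=(-1)^{\un{t}^J}\wt{J}\bigbra{i(\chi_J^s),-\un{s}^J}\,S_{i(\chi_J^s)-\un{s}^J}\varphi^{\chi_J}.
\end{equation*}
(2) Identify the index $i(\chi_J^s)-\un{s}^J$ modulo $q-1$ with $i(\chi_J)$: this is exactly the content of the digit computation in the proof of Lemma \ref{Constants Lem Jab} (equations analogous to \eqref{Constants Eq Lem Jab 1}--\eqref{Constants Eq Lem Jab 2}) combined with Lemma \ref{Constants Lem ichiJ+sJ}, which together force $\bigbra{i(\chi_J^s)-\un{s}^J}_j=\bigbra{i(\chi_J)}_j$ for all $j$ once one uses $(J-1)^{\ss}=J^{\ss}$ to equate $J^{\delta}=J\Delta(J-1)^{\ss}$ with the support of $J^{\nss}$-type data. (3) Conclude that $S_{i(\chi_J^s)-\un{s}^J}\varphi^{\chi_J}=S_{i(\chi_J)}\varphi^{\chi_J}$ in $\theta\cc(\chi_J)$, and set $\wt{c}(\chi_J)\eqdef(-1)^{\un{t}^J}\wt{J}\bigbra{i(\chi_J^s),-\un{s}^J}\in\OK$. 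Its leading term is $(-1)^{\un{t}^J}J\bigbra{i(\chi_J^s),-\un{s}^J}$ by Lemma \ref{Constants Lem SSv} (which records the leading term of $\wt{J}$), and nonvanishing follows since $J\bigbra{i(\chi_J^s),-\un{s}^J}\in\Fq\x$, as computed explicitly in Lemma \ref{Constants Lem Jab}.

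\textbf{Expected obstacle.} The delicate point is step (2): verifying that the two sides have literally the same $S$-index, i.e.\ that $i(\chi_J^s)-\un{s}^J$ and $i(\chi_J)$ agree as residues mod $q-1$, and equivalently that the admissibility hypotheses on the indices in Lemma \ref{Constants Lem SSv} hold. This requires carefully carrying digits through Lemma \ref{Constants Lem compare sJ}(ii) and keeping track of the carries in the base-$p$ arithmetic, exactly in the style of the proof of Lemma \ref{Constants Lem Jab}; the hypothesis $(J-1)^{\ss}=J^{\ss}$ is what makes the combinatorics collapse to the clean statement, and $J\neq J^*$ is what rules out the degenerate case where an index becomes divisible by $q-1$ (which would produce an extra $S_0^+$-type term as in Lemma \ref{Constants Lem S+S+S+}(ii)). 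Everything else is bookkeeping with the already-established lemmas.
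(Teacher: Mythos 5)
Your proposal is correct and follows essentially the same route as the paper: establish the congruence $i(\chi_J^s)-\un{s}^J\equiv i(\chi_J)\ \mathrm{mod}\ (q-1)$ (via the computation \eqref{Constants Eq Lem Jab 2} with $J^{\delta}=J^{\nss}$, or equivalently via Lemma \ref{Constants Lem ichiJ+sJ}), check that $q-1$ divides neither $i(\chi_J^s)$ nor $i(\chi_J)$, and apply Lemma \ref{Constants Lem SSv} with $b=0$. The only small slip is attributional: the nonvanishing of $i(\chi_J^s)$ mod $q-1$ comes from $J^{\delta}=J^{\nss}\neq\emptyset$ (i.e.\ $J\nsubseteq J_{\rhobar}$, which you correctly note is automatic) rather than from $J\neq J^*$, which instead rules out $J^{\delta}=\cJ$; since both facts are on your table, this does not affect the proof.
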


\begin{proof}
    Since $(J-1)^{\ss}=J^{\ss}$, we deduce from \eqref{Constants Eq Lem Jab 2} that 
        $i(\chi_J^s)-\un{s}^J\equiv i(\chi_J)~\mod\,(q-1).$
    Moreover, our assumption implies $J^{\nss}\notin\set{\emptyset,\cJ}$, hence $(q-1)\nmid i(\chi_J^s)$ and $(q-1)\nmid i(\chi_J)$. Then we conclude using Lemma \ref{Constants Lem SSv}.
\end{proof}

Recall that $c'(J)=(-1)^{f-1+\abs{J\cap(J-1)^{\nss}}}\mu_{J^s,J}\bigbra{P_2(J)/P_1(J)}c(J)$ for $J\nsubseteq J_{\rhobar}$ and $J\neq J^*$ with $\mu_{J^s,J}$ defined in Proposition \ref{Constants Prop mu for Js}(ii), $P_i(J)$ defined in Example \ref{Constants Ex define gammaJ} for $i\in\set{1,2}$ and $c(J)$ defined in Example \ref{Constants Ex formula gammaJ}. 

\begin{proposition}\label{Constants Prop c'J simple}
    Let $\emptyset\neq J\subseteq\cJ$ such that $J\neq J^*$ and $(J-1)^{\ss}=J^{\ss}$. Then we have (see \eqref{Constants gamma Up c'} for $c'(J)$ and see \eqref{Constants Eq ABd} for $A(J)$)
    \begin{equation*}
        c'(J)=(-1)^{A(J)}.
    \end{equation*}
\end{proposition}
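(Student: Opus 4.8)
The plan is to unwind the definition $c'(J)=(-1)^{f-1+\abs{J\cap(J-1)^{\nss}}}\mu_{J^s,J}\bigbra{P_2(J)/P_1(J)}c(J)$ and show that in the special case $(J-1)^{\ss}=J^{\ss}$ all the non-sign factors cancel. First I would observe that the hypothesis $(J-1)^{\ss}=J^{\ss}$ forces $\ell(J)=\ell(J^{\ss})$ and $\delta_{\ss}^i(J)$, $\delta_{\ss}^i(J^{\ss})$ have the same $\ss$-part for all $i\geq 1$, so the recursion collapses: $P_1(J)=P_1(\chi_J)/P_1(\chi_{J^{\ss}})\cdot(\text{terms that telescope})$, and more importantly $g(J)=U_p(J)c(J)$ reduces to the local statement of Lemma \ref{Constants Lem pr Hu}, namely $S_{i(\chi_J^s)}S_0\varphi^{\chi_J}=\wt{c}(\chi_J)S_{i(\chi_J)}\varphi^{\chi_J}$ with leading term $c(\chi_J)=(-1)^{\un{t}^J}J\bigbra{i(\chi_J^s),-\un{s}^J}$. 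In fact, since $(J-1)^{\ss}=J^{\ss}$ means $\delta_{\ss}(J)=\delta_{\ss}(J^{\ss})$, the two paths in Example \ref{Constants Ex define gammaJ} differ only in their first arrow, so $c(J)=c(\chi_J)$ directly (the patched-module argument of \S\ref{Constants Sec Diamond diagram} reduces to a single square, and by \cite[Prop.~4.16]{DL21} the constant is read off from Lemma \ref{Constants Lem pr Hu}).

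Next I would assemble the three explicit inputs. From Proposition \ref{Constants Prop mu for Js}(ii), $\mu_{J^s,J}=(-1)^{\un{t}^{J^s}}\sprod_{j}(s^J_j)!$. From \eqref{Constants Eq P2}, $P_2(J)=P_2(\chi_J)=\sprod_{j+1\notin J^{\nss}}(p-1-s^{J^{\ss}}_j)!$, and since $(J-1)^{\ss}=J^{\ss}$ the product $P_1(J)$ telescopes down to $P_1(\chi_J)=\sprod_{j+1\in J^{\delta}}(p-1-s^{(J-1)^{\ss}}_j)!$ where $J^{\delta}=J\Delta(J-1)^{\ss}$; here I would use that $J\neq J^*$ guarantees $J^{\delta}\neq\cJ$ (Lemma \ref{Constants Lem J*}), so these products are over proper subsets and nothing degenerates. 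Then I substitute the formula for $c(\chi_J)=(-1)^{\un{t}^J}J\bigbra{i(\chi_J^s),-\un{s}^J}$ and invoke Lemma \ref{Constants Lem Jab} to rewrite $J\bigbra{i(\chi_J^s),-\un{s}^J}$ as an explicit product of $(s^{(J-1)^{\ss}}_j+1)$-type factors times a sign.

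The heart of the computation is then a bookkeeping identity: collecting all the factorials and using \eqref{Constants Eq factorial} to convert $(p-1-r)!$ into $(-1)^{r+1}(r!)\inv$, together with Lemma \ref{Constants Lem compare sJ}(ii) which relates $s^J_j$ to $s^{J^{\ss}}_j$ (here $=s^{(J-1)^{\ss}}_j$) by $s^J_j = s^{J^{\ss}}_j+\delta_{j\in J^{\nss}}$ if $j+1\notin J^{\nss}$ and $p-2-s^{J^{\ss}}_j+\delta_{j\in J^{\nss}}$ if $j+1\in J^{\nss}$, I expect every $(\cdot)!$ to cancel, leaving only a power of $-1$. I would then check that the residual sign equals $(-1)^{A(J)}$ with $A(J)=A^{\ss}(J)+\sum_{j\notin J_{\rhobar}}\delta_{j\in\partial J}$ as in \eqref{Constants Eq ABd}; Lemma \ref{Constants Lem t+t+s} (giving $(-1)^{\un{t}^J+\un{t}^{J^s}+\un{s}^{J'}}=1$) and the combinatorial identity $\#\{j:j+1\in J^{\delta},j\notin J^{\delta}\}=\#\{j:j+1\notin J^{\delta},j\in J^{\delta}\}$ used in Lemma \ref{Constants Lem Jab} will be the tools for the sign comparison. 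The main obstacle is precisely this final sign-matching: tracking the contributions of $(-1)^{\un{t}^J}$, $(-1)^{\un{t}^{J^s}}$, $(-1)^{\abs{J\cap(J-1)^{\nss}}}$, the $(-1)^{f-1}$, and the signs produced by each application of \eqref{Constants Eq factorial}, and recognizing that in the case $(J-1)^{\ss}=J^{\ss}$ the quantity $A^{\ss}(J)$ simplifies (each interval of $J_{\rhobar}$ contributes via the boundary behavior of $J$, which is constrained by $(J-1)^{\ss}=J^{\ss}$). I would handle this by a case split on the position of $j$ relative to $J$, $J-1$, and $J_{\rhobar}$ exactly as in the proofs of Lemma \ref{Constants Lem J0 Delta J} and Lemma \ref{Constants Lem Jab}, reducing the whole identity to a finite check per embedding $j\in\cJ$.
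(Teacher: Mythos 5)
Your proposal follows essentially the same route as the paper's proof: the collapse $\delta_{\ss}(J)=J^{\ss}$ reduces $P_1(J)$ to $P_1(\chi_J)$ and reduces $c(J)$ to the single-square constant $c(\chi_J)$ of Lemma \ref{Constants Lem pr Hu}, after which \eqref{Constants Eq muJsJ}, \eqref{Constants Eq P2}, Lemma \ref{Constants Lem Jab}, \eqref{Constants Eq factorial}, Lemma \ref{Constants Lem compare sJ}(ii) and Lemma \ref{Constants Lem t+t+s} yield the cancellation of all factorials and the residual sign $(-1)^{A(J)}$, exactly as in the paper. Two harmless slips: the hypothesis gives $\delta_{\ss}(J)=J^{\ss}$ (not $\delta_{\ss}(J)=\delta_{\ss}(J^{\ss})$) and hence $\ell(J)=\ell(J^{\ss})+1$ rather than $\ell(J)=\ell(J^{\ss})$, which is precisely what makes the two paths differ by the single first arrow as you claim.
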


\begin{proof}
    By \eqref{Constants Eq muJsJ} using \eqref{Constants Eq factorial} and Lemma \ref{Constants Lem compare sJ}(ii) we have
    \begin{equation}\label{Constants Eq Prop c'J simple 1}
        \mu_{J^s,J}=(-1)^{\un{t}^{J^s}+\sum_{j+1\in J^{\nss}}\bra{s^{J^{\ss}}_j+\delta_{j\in J^{\nss}}}}\bbbra{\frac{\sprod_{j+1\notin J^{\nss}}\bra{s^{J^{\ss}}_j+\delta_{j\in J^{\nss}}}!}{\sprod_{j+1\in J^{\nss}}\bra{s^{J^{\ss}}_j+\delta_{j\notin J^{\nss}}}!}}.
    \end{equation}
    Since $(J-1)^{\ss}=J^{\ss}$, we have $\delta_{\ss}^{i+1}(J)=\delta_{\ss}^{i}(J^{\ss})$ for all $i\geq0$, hence we have 
    \begin{equation}\label{Constants Eq Prop c'J simple 2}
    \begin{aligned}
        P_2(J)/P_1(J)&=P_2(\chi_J)/P_1(\chi_J)\\
        &=\frac{\sprod_{j+1\notin J^{\nss}}\bigbra{p-1-s^{J^{\ss}}_j}!}{\sprod_{j+1\in J^{\nss}}\bigbra{p-1-s^{J^{\ss}}_j}!}
        =(-1)^{\un{s}^{J^{\ss}}+\un{1}}\bbbra{\frac{\sprod_{j+1\in J^{\nss}}\bigbra{s^{J^{\ss}}_j}!}{\sprod_{j+1\notin J^{\nss}}\bigbra{s^{J^{\ss}}_j}!}}.
    \end{aligned}
    \end{equation}
    Recall from Example \ref{Constants Ex formula gammaJ} that $c(J)=c(\psi,\psi')$ with $\psi_i=\chi_{\delta_{\ss}^{n-i}(J)}^s$ for $0\leq i\leq n\eqdef\ell(J)-1$ and $\psi_i'=\chi_{\delta_{\ss}^{m-i}(J^{\ss})}^s$ for $0\leq i\leq m\eqdef\ell(J^{\ss})-1$. Since $(J-1)^{\ss}=J^{\ss}$, we have $n=m+1$ and $\psi_{i+1}=\psi'_i$ for $0\leq i\leq m$, hence $c(J)$ is also equal to $c(\psi,\psi')$ with $n=0$, $m=-1$ and $\psi_0=\chi_J^s$ by definition. Then we deduce from Lemma \ref{Constants Lem Jab} and Lemma \ref{Constants Lem pr Hu} that
    \begin{equation}\label{Constants Eq Prop c'J simple 3}
        c(J)=c(\chi_J)=(-1)^{1+\un{t}^J+\sum_{j+1\in J^{\nss}}s^{J^{\ss}}_j}\bbbra{\frac{\prod_{j+1\in J^{\nss},j\notin J^{\nss}}(-1)\bigbra{s^{J^{\ss}}_j+1}}{\prod_{j+1\notin J^{\nss},j\in J^{\nss}}\bigbra{s^{J^{\ss}}_j+1}}}.
    \end{equation}
    By the definition of $c'(J)$ and combining \eqref{Constants Eq Prop c'J simple 1}, \eqref{Constants Eq Prop c'J simple 2} and \eqref{Constants Eq Prop c'J simple 3}, we deduce that $c'(J)=(-1)^{d}$ with
    \begin{align*}
        d&=
    \begin{aligned}[t]
        &\bigbra{f-1+\abs{J\cap(J-1)^{\nss}}}+\bbra{\un{t}^{J^s}+\ssum_{j+1\in J^{\nss}}\bra{s^{J^{\ss}}_j+\delta_{j\in J^{\nss}}}}\\
        &\hspace{1.5cm}+\bbra{\un{s}^{J^{\ss}}+\un{1}}+\bbra{1+\un{t}^J+\ssum_{j+1\in J^{\nss}}s^{J^{\ss}}_j+\#\set{j:j+1\in J^{\nss},j\notin J^{\nss}}}
    \end{aligned}\\
        &\equiv\un{t}^J+\un{t}^{J^s}+\un{s}^{J^{\ss}}+\#\set{j:j+1\in J^{\nss},j\notin J^{\nss}}\\
        &\equiv\#\set{j:j+1\notin J^{\nss},j\in J^{\nss}}=\#\set{j:j\in J^{\nss},j+1\notin J}=A(J)\quad\mod~2,
    \end{align*}
    where the last congruence follows from Lemma \ref{Constants Lem t+t+s} and the last equality follows from the definition of $A(J)$ using $(J-1)^{\ss}=J^{\ss}$. This completes the proof.
\end{proof}

\begin{remark}
    When $(J-1)^{\ss}=J^{\ss}$, we are in the situation of \cite{Hu16}, and the constant $g(J)=U_p(J)c(J)$ (hence the constant $\gamma(J)=U_p(J)c'(J)$) can be computed by \cite[Thm.~4.7]{Hu16}. Here we remark that the term $(-1)^{e(\tau)(r_0,\ldots,r_{f-1})}$ is missing in the formula of \cite[Thm.~4.5]{Hu16} and \cite[Thm.~4.7]{Hu16}. 
\end{remark}

\hspace{\fill}

\subsection{The case \texorpdfstring{$(J-1)^{\ss}\neq J^{\ss}$}{.}}

\begin{lemma}\label{Constants Lem pr DL}
    Let $\emptyset\neq J\subseteq J_{\rhobar}$ (which implies $J\neq J^*$ and $(J-1)^{\ss}\neq J^{\ss}$). Then we have
    \begin{equation*}
        S_{i(\chi_J^s)}S_0\varphi^{\chi_J}=\wt{c}(\chi_J)S^+_{i^+\bra{\chi_J}}\varphi^{\chi_J}\quad\text{in}~\theta\cc(\chi_J),
    \end{equation*}
    where $i^+\bra{\chi_J}\eqdef q-1-i(\chi_J^s)$ and $0\neq\wt{c}(\chi_J)\in\OK$ has leading term $c(\chi_J)\eqdef J\bigbra{i(\chi_J^s),-\un{s}^J}$.
\end{lemma}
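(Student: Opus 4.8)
The plan is to mimic the proof of Lemma~\ref{Constants Lem pr Hu}, but now tracking the passage from an $S$-operator to an $S^+$-operator, which happens precisely because the hypothesis $\emptyset\neq J\subseteq J_{\rhobar}$ forces $(q-1)\mid\bigbra{i(\chi_J^s)-\un{s}^J}$ (in fact $J^{\nss}=\emptyset$, so the ``simple'' intermediate exponent $i(\chi_J)$ computed via Lemma~\ref{Constants Lem compare sJ}(ii) becomes $0$ modulo $q-1$), and in that degenerate case $S_aS_bv$ is no longer a single $S$-operator but involves $S^+$. First I would record that $\varphi^{\chi_J}\in\theta\cc(\chi_J)$ is an $I$-eigenvector with $I$-character $[\chi_J]$, so that $S_0\varphi^{\chi_J}$ is an $H$-eigenvector with the conjugate character by Lemma~\ref{Constants Lem S+v}; applying $S_{i(\chi_J^s)}$ we again land in an $H$-eigenspace and, since the resulting $H$-eigencharacter is the one attached to $\varphi^{\chi_J}$ itself, the output must be proportional to $S^+_{i^+(\chi_J)}\varphi^{\chi_J}$ for the unique $i^+(\chi_J)$ with $0\le i^+(\chi_J)\le q-1$ realizing that character, which one computes from Lemma~\ref{Constants Lem S+v} to be $q-1-i(\chi_J^s)$. (Here one uses $J\neq J^*$, equivalently $i(\chi_J^s)\neq 0$ modulo $q-1$, so that $i^+(\chi_J)$ is genuinely in the stated range and $S^+_{i^+(\chi_J)}$ is the normalized operator of \S\ref{Constants Sec c'J}.)

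Next I would pin down the scalar $\wt{c}(\chi_J)$. Since $J\subseteq J_{\rhobar}$ we have $\un{s}^{J^{\ss}}=\un{s}^J$ and, by the computation in \eqref{Constants Eq Lem Jab 2} specialized to $J^{\delta}=J^{\nss}=\emptyset$ (or directly from Lemma~\ref{Constants Lem ichiJ+sJ} with $J^{\nss}=\emptyset$), $i(\chi_J^s)-\un{s}^J\equiv 0\pmod{q-1}$. Thus $S_{i(\chi_J^s)}S_0\varphi^{\chi_J}$ is governed by the degenerate branch of Lemma~\ref{Constants Lem SSv} (equivalently, the $(q-1)\mid\sum a_i$ case of Lemma~\ref{Constants Lem S+S+S+}(ii)): writing $S_0=S^+_0$ up to the relevant identity and composing, one gets a main term $\wt{J}\bigbra{i(\chi_J^s),-\un{s}^J}$ times $S^+_{q-1-i(\chi_J^s)}$, with leading term $J\bigbra{i(\chi_J^s),-\un{s}^J}$, plus a term supported on $S^+_0$. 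The point is that in $\theta\cc(\chi_J)$, which is the principal series $\cO$-lattice with irreducible cosocle $\sigma(\chi_J)$, the extra $S^+_0\varphi^{\chi_J}$-term and the identity-term both vanish or get absorbed: $S^+_0\varphi^{\chi_J}$ lies in the kernel of the projection to the cosocle (it is an $H$-eigenvector of the ``wrong'' character relative to the $I_1$-invariants we are tracking, or it is divisible by $p$), so modulo that it does not contribute, leaving exactly $\wt{c}(\chi_J)S^+_{i^+(\chi_J)}\varphi^{\chi_J}$ with $\wt{c}(\chi_J)$ having leading term $J\bigbra{i(\chi_J^s),-\un{s}^J}$. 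Note the sign discrepancy with Lemma~\ref{Constants Lem pr Hu}: there the leading term carried an extra $(-1)^{\un{t}^J}$ coming from Lemma~\ref{Constants Lem SSv}, whereas here, because we end on an $S^+$-operator rather than an $S$-operator, that factor is absent — I would double-check this against the statement of Lemma~\ref{Constants Lem SSv} and the conventions of Lemma~\ref{Constants Lem S+S+S+}(ii), since it is the kind of bookkeeping that is easy to get wrong.

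I expect the main obstacle to be precisely this last point: showing cleanly that, inside $\theta\cc(\chi_J)$, the degenerate-case correction terms of Lemma~\ref{Constants Lem S+S+S+}(ii) (the $J'(\dots)S^+_0$ term and the $\smat{1&0\\0&1}$ term) do not interfere with the identity $S_{i(\chi_J^s)}S_0\varphi^{\chi_J}=\wt{c}(\chi_J)S^+_{i^+(\chi_J)}\varphi^{\chi_J}$, i.e.\ that we genuinely have an equality in $\theta\cc(\chi_J)$ and not merely in its cosocle. This should follow from the structure of the principal series lattice, as in \cite[\S4]{DL21} and \cite[Lemma~3.2]{Wang2}: the submodule generated by $\varphi^{\chi_J}$ under the relevant operators is controlled, and the ``extra'' vectors either coincide with $p\cdot(\text{something})$ or lie outside the span in a way that forces the stated clean identity after possibly rescaling $\wt{c}(\chi_J)$ within its homothety class. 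Once that is in place, reading off the leading term from \eqref{Constants Eq Jab} finishes the proof.
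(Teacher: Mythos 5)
There is a genuine gap, and it sits at the very heart of your argument: the divisibility claim that $\emptyset\neq J\subseteq J_{\rhobar}$ forces $(q-1)\mid\bigbra{i(\chi_J^s)-\un{s}^J}$ is false. By \eqref{Constants Eq Lem Jab 2}, $i(\chi_J^s)-\un{s}^J\equiv\ssum_{j+1\notin J^{\delta}}\bigbra{p-1-s^{(J-1)^{\ss}}_j}p^j\pmod{q-1}$, and under the genericity of \eqref{Constants Eq rbarv} each digit lies strictly between $0$ and $p-1$; hence this is divisible by $q-1$ precisely when $J^{\delta}=\cJ$, i.e.\ when $J=J^*$ (Lemma \ref{Constants Lem J*}) — which is exactly the case excluded here. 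Likewise $i(\chi_J)\equiv-\un{s}^J\not\equiv0\pmod{q-1}$ when $J^{\nss}=\emptyset$, so your "degenerate branch" of Lemma \ref{Constants Lem SSv} / Lemma \ref{Constants Lem S+S+S+}(ii) never comes into play (note also that the degenerate case of Lemma \ref{Constants Lem S+S+S+}(ii) produces $S^+_0$ and identity terms, not an $S^+_{q-1-i(\chi_J^s)}$ main term). In fact Lemma \ref{Constants Lem SSv} applies in its non-degenerate form and gives $S_{i(\chi_J^s)}S_0\varphi^{\chi_J}=(-1)^{\un{t}^J}\wt{J}\bigbra{i(\chi_J^s),-\un{s}^J}S_{i(\chi_J^s)-\un{s}^J}\varphi^{\chi_J}$; so the appearance of an $S^+$-operator is not a degeneration phenomenon at all. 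Your eigencharacter bookkeeping also slips: the $H$-character of $S_{i(\chi_J^s)}S_0\varphi^{\chi_J}$ is $\chi_J\alpha^{-i(\chi_J^s)}$ (Lemma \ref{Constants Lem S+v}), not the character of $\varphi^{\chi_J}$ itself, and $J\neq J^*$ is needed to ensure this character differs from $\chi_J^s$ (i.e.\ $i(\chi_J^s)\not\equiv\un{s}^J$), not to ensure $i(\chi_J^s)\not\equiv0$.

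The actual content of the lemma is therefore the comparison, inside the lattice $\theta\cc(\chi_J)$, of the two vectors $S_{i(\chi_J^s)-\un{s}^J}\varphi^{\chi_J}$ and $S^+_{q-1-i(\chi_J^s)}\varphi^{\chi_J}$, which lie in the same $H$-eigenspace (one must check it is one-dimensional, using $i(\chi_J^s)\not\equiv0,\un{s}^J$, and that $S^+_{i^+(\chi_J)}\varphi^{\chi_J}\neq0$), together with the integral normalization of the ratio — in particular the disappearance of the sign $(-1)^{\un{t}^J}$ relative to Lemma \ref{Constants Lem pr Hu}, which you flag but leave unresolved and which your "absorption of correction terms" discussion cannot produce, resting as it does on the false divisibility. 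The paper proves the lemma by simply invoking \cite[Lemma~5.11, Lemma~5.28]{DL21}, whose method is the one reproduced in the proof of Lemma \ref{Constants Lem pr new}: establish the identity with an unknown constant via Lemma \ref{Constants Lem S+v} and \cite[Lemma~2.11]{DL21} (the structural input about principal-series lattices), then pin down the constant and its leading term by composing with auxiliary operators $S_z$ and using the non-degenerate product formulas. To repair your write-up you would need to replace the degenerate-case mechanism by this comparison argument (or an equivalent explicit computation), and in particular supply the $S$-versus-$S^+$ conversion constant that accounts for the missing $(-1)^{\un{t}^J}$.
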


\begin{proof}
    The proof is the same as \cite[Lemma~5.11]{DL21} and \cite[Lemma~5.28]{DL21}.
\end{proof}

\begin{lemma}\label{Constants Lem pr new}
    Let $J\subseteq\cJ$ such that $J\nsubseteq J_{\rhobar}$, $J\neq J^*$ and $(J-1)^{\ss}\neq J^{\ss}$. Then we have
    \begin{equation}\label{Constants Eq Lem pr new statement}
        S_{i(\chi_J^s)}S_0\varphi^{\chi_J}=\wt{c}(\chi_J)S^+_{i^+\bra{\chi_J}}S_{i(\chi_J)}\varphi^{\chi_J}\quad\text{in}~\theta(\chi_J),
    \end{equation}
    where $i^+\bra{\chi_J}\eqdef i(\chi_J)-i(\chi_J^s)+\un{s}^J$, and $0\neq\wt{c}(\chi_J)\in K$ has leading term
    \begin{equation*}
        c(\chi_J)\eqdef(-1)^{\un{t}^J}\frac{J\bigbra{i(\chi_J^s),-\un{s}^J}}{J\bigbra{i^+(\chi_J),-i(\chi_J)-\un{s}^J}}.
    \end{equation*}
\end{lemma}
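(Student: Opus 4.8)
The plan is to reduce the identity \eqref{Constants Eq Lem pr new statement} in $\theta(\chi_J)$ to a purely combinatorial computation with $S$- and $S^+$-operators, following the strategy of \cite[Lemma~5.11, Lemma~5.28]{DL21} but accounting for the fact that now $S_{i(\chi_J)}\varphi^{\chi_J}$ is \emph{not} a multiple of a lowest-weight vector (as it was when $(J-1)^{\ss}=J^{\ss}$, cf.\ Lemma \ref{Constants Lem pr Hu}), nor is $S_0\varphi^{\chi_J}$ itself killed by all the $S^+$-operators. First I would record the relevant exponents: by Lemma \ref{Constants Lem compare sJ}(ii) and \eqref{Constants Eq Lem Jab 2} one has $i(\chi_J^s)-\un{s}^J\equiv i(\chi_J)+\bigbra{\text{correction supported where }j+1\in J^{\delta}}$, and by Lemma \ref{Constants Lem ichiJ+sJ} one controls $\bigbra{i(\chi_J)+\un{s}^J}_j$; together these pin down $i^+(\chi_J)=i(\chi_J)-i(\chi_J^s)+\un{s}^J$ as a genuine residue mod $q-1$ and show it is the ``unipotent'' exponent needed so that $S^+_{i^+(\chi_J)}$ maps the $I$-character of $S_{i(\chi_J)}\varphi^{\chi_J}$ to that of the left-hand side (use Lemma \ref{Constants Lem S+v} for the $H$-eigencharacter bookkeeping).

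The core step is then an operator identity in $\cO[\GL_2(\Fq)]$ (or rather after inverting $p$, which is why $\wt c(\chi_J)\in K$ rather than $\cO$): apply Lemma \ref{Constants Lem SSv} to rewrite $S_{i(\chi_J^s)}S_0\varphi^{\chi_J}$ as a scalar times $S_{i(\chi_J^s)-\un{s}^J}\varphi^{\chi_J}$, provided $(q-1)$ divides none of $i(\chi_J^s)$, $i(\chi_J^s)-\un{s}^J$ — which holds because $J\nsubseteq J_{\rhobar}$ forces $J^{\delta}\notin\set{\emptyset,\cJ}$ and similarly for the shifted exponent, using $J\neq J^*$ to exclude the one degenerate case (Lemma \ref{Constants Lem J*}). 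Separately, apply Lemma \ref{Constants Lem S+S+S+}(i) (the $S_aS^+_b=\wt J(a,b)S_{a+b}$ relation, read backwards) to express $S^+_{i^+(\chi_J)}S_{i(\chi_J)}\varphi^{\chi_J}$ as $\wt J\bigbra{i(\chi_J)^{?},\,i^+(\chi_J)}\inv$ times $S_{i(\chi_J)+i^+(\chi_J)}\varphi^{\chi_J}=S_{i(\chi_J^s)-\un{s}^J+\un{s}^J}\cdots$ — here one must be careful: it is $S_aS^+_b$ not $S^+_bS_a$ that Lemma \ref{Constants Lem S+S+S+} gives, so I would instead use Lemma \ref{Constants Lem SSv} once more to move $S_{i(\chi_J)}\varphi^{\chi_J}=$ (scalar) $S_{?}\varphi^{\chi_J}$ into a form where $S^+_{i^+(\chi_J)}$ acts on the right by $S^+_aS^+_b=\wt J(a,b)S^+_{a+b}$ composed with the base identity $S_0\varphi^{\chi_J}$, matching the two sides. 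Collecting the leading terms via \eqref{Constants Eq Jab}, \eqref{Constants Eq Ja1an} and \eqref{Constants Eq factorial}, the ratio $J\bigbra{i(\chi_J^s),-\un{s}^J}\big/J\bigbra{i^+(\chi_J),-i(\chi_J)-\un{s}^J}$ emerges, and the sign $(-1)^{\un{t}^J}$ comes from the two applications of Lemma \ref{Constants Lem SSv} (each contributing $(-1)^{\un{t}^J}$, but one is absorbed into the denominator $J$-symbol, leaving a single factor).

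The main obstacle I anticipate is bookkeeping the divisibility hypotheses needed to legally apply Lemmas \ref{Constants Lem SSv} and \ref{Constants Lem S+S+S+}: one must check that $(q-1)$ divides none of the relevant partial sums of exponents (including $i^+(\chi_J)$ itself and $i(\chi_J)-i(\chi_J^s)$), and that $i^+(\chi_J)\geq 0$ coordinate-wise as a residue, which is exactly where the hypotheses $J\nsubseteq J_{\rhobar}$ and $(J-1)^{\ss}\neq J^{\ss}$ and $J\neq J^*$ are all used — each excludes a boundary case where some exponent collapses to $0$ or $q-1$. A secondary subtlety is that the identity genuinely lives in $\theta(\chi_J)=\theta\cc(\chi_J)[1/p]$ and not in the integral lattice: the intermediate operator $S_{i(\chi_J)}\varphi^{\chi_J}$ need not lie in (a specified lattice with the right cosocle), so the scalar $\wt c(\chi_J)$ acquires a denominator; I would track this by working in $\theta(\chi_J)$ throughout and only at the end noting $\wt c(\chi_J)\in K\x$ with the stated leading term, paralleling the passage from \cite[Lemma~5.11]{DL21} to the cuspidal case but without the integrality that holds there.
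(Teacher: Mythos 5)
Your framework agrees with the paper's proof up to the midpoint: first establish \eqref{Constants Eq Lem pr new statement} with an unknown scalar as in the proof of \cite[Lemma~5.11]{DL21} (note that matching $H$-eigencharacters via Lemma \ref{Constants Lem S+v} is not by itself enough; the three hypotheses are used precisely to get $R\chi_J^s\neq\chi_J^s$, $R\chi_J^s\neq\chi_J$ and $R\chi_J\neq R\chi_{J^s}$ so that \cite[Lemma~2.11]{DL21} applies), and then apply Lemma \ref{Constants Lem SSv} to get $S_{i(\chi_J^s)}S_0\varphi^{\chi_J}=\wt{J}_1S_{i(\chi_J^s)-\un{s}^J}\varphi^{\chi_J}$, after checking $(q-1)\nmid i(\chi_J^s)$ (from $J\nsubseteq J_{\rhobar}$) and $(q-1)\nmid\bigbra{i(\chi_J^s)-\un{s}^J}$ (from $J\neq J^*$ via \eqref{Constants Eq Lem Jab 2}). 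All of this matches.

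The genuine gap is exactly at the step you flagged yourself. Having correctly observed that the available relations give $S_aS^+_b$ and $S^+_aS^+_b$ but never $S^+_aS_b$, your workaround --- rewrite $S_{i(\chi_J)}\varphi^{\chi_J}$ as a scalar times some $S_{?}\varphi^{\chi_J}$ ``by Lemma \ref{Constants Lem SSv} once more'' and then let $S^+_{i^+(\chi_J)}$ act through $S^+_aS^+_b$ --- is not a legitimate use of the lemmas: Lemma \ref{Constants Lem SSv} only treats a composition $S_aS_b$ applied to an $I$-eigenvector of character $[\chi_J]$, it says nothing about the single operator $S_{i(\chi_J)}$, and nothing you cite expresses $S_{i(\chi_J)}\varphi^{\chi_J}$ as an $S^+$-string applied to $S_0\varphi^{\chi_J}$; even if such a proportionality were available from a multiplicity-one argument, its constant would be a new unknown of the same nature as $\wt{c}(\chi_J)$, so nothing would be computed. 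The paper's device, which is the key idea missing from your plan, is an auxiliary operator: choose $0<z<q-1$ with $z-i(\chi_J^s)+\un{s}^J$, $z-i(\chi_J^s)$, $z+i^+(\chi_J)$, $z+i^+(\chi_J)-i(\chi_J)$ all prime to $q-1$ (possible since $q\geq7$) and apply $S_z$ to both sides of \eqref{Constants Eq Lem pr new statement}: on the left Lemma \ref{Constants Lem SSv} gives $\wt{J}_1\wt{J}_2\,S_{z-i(\chi_J^s)}\varphi^{\chi_J}$, while on the right $S_zS^+_{i^+(\chi_J)}=\wt{J}_3S_{z+i^+(\chi_J)}$ is now in the order covered by Lemma \ref{Constants Lem S+S+S+}(i), and one further application of Lemma \ref{Constants Lem SSv} gives $\wt{J}_3\wt{J}_4\,S_{z-i(\chi_J^s)}\varphi^{\chi_J}$; cancelling this nonzero vector yields $\wt{c}(\chi_J)=\bra{\wt{J}_1\wt{J}_2}/\bra{\wt{J}_3\wt{J}_4}$, and the $z$-dependence is eliminated by the cocycle identity $J(a,b)J(a+b,c)=J(a,b+c)J(b,c)$ deduced from \eqref{Constants Eq Jab}, which produces exactly the stated $c(\chi_J)$. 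Two smaller inaccuracies: the sign count is not ``two applications of Lemma \ref{Constants Lem SSv}'' --- three of the four $J$-factors carry $(-1)^{\un{t}^J}$ and a single sign survives in the ratio; and coordinate-wise nonnegativity of $i^+(\chi_J)$ is irrelevant, since $S_a$ is defined through the residue $a_q$, so only non-divisibility by $q-1$ needs checking.
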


\begin{proof}
    The assumption $J\neq J^*$ implies $R\chi_J^s\neq\chi_J^s$. The assumption $J\nsubseteq J_{\rhobar}$ implies $R\chi_J^s\neq\chi_J$. The assumption $(J-1)^{\ss}\neq J^{\ss}$ implies $R\chi_J\neq R\chi_{J^s}$. Then as in the proof of \cite[Lemma~5.11]{DL21} (using Lemma \ref{Constants Lem S+v} and \cite[Lemma~2.11]{DL21}), the equality \eqref{Constants Eq Lem pr new statement} holds for some $0\neq\wt{c}(\chi_J)\in K$.

    Next we compute $\wt{c}(\chi_J)\in K$. The assumption $J\nsubseteq J_{\rhobar}$ implies $J^{\delta}\neq\emptyset$, hence $(q-1)\nmid i(\chi_J^s)$. The assumption $J\neq J^*$ implies $J^{\delta}\neq\cJ$, hence $(q-1)\nmid\bigbra{i(\chi_J^s)-\un{s}^J}$ by \eqref{Constants Eq Lem Jab 2}. Then by Lemma \ref{Constants Lem SSv} we have 
    \begin{equation}\label{Constants Eq Lem pr new 1}
        S_{i(\chi_J^s)}S_0\varphi^{\chi_J}=\wt{J}_1S_{i(\chi_J^s)-\un{s}^J}\varphi^{\chi_J},
    \end{equation}
    where $0\neq\wt{J}_1\in\OK$ has leading term $J_1\eqdef(-1)^{\un{t}^J}J\bigbra{i(\chi_J^s),-\un{s}^J}$.

    We choose $0<z<q-1$ such that none of the following numbers
    \begin{equation*}
        z-i(\chi_J^s)+\un{s}^J,~z-i(\chi_J^s),~z+i^+\bra{\chi_J},~z+i^+\bra{\chi_J}-i(\chi_J)
    \end{equation*}
    are multiples of $(q-1)$, which is possible since $q\geq7$. On one hand, by Lemma \ref{Constants Lem SSv} we have 
    \begin{equation}\label{Constants Eq Lem pr new 2}
        S_zS_{i(\chi_J^s)-\un{s}^J}\varphi^{\chi_J}=\wt{J}_2S_{z-i(\chi_J^s)}\varphi^{\chi_J}, 
    \end{equation}
    where $0\neq\wt{J}_2\in\OK$ has leading term $J_2\eqdef(-1)^{\un{t}^J}J\bigbra{z,-i(\chi_J^s)}$. On the other hand, by Lemma \ref{Constants Lem S+S+S+}(i) and Lemma \ref{Constants Lem SSv} we have
    \begin{equation}\label{Constants Eq Lem pr new 3}
        S_zS^+_{i^+\bra{\chi_J}}S_{i(\chi_J)}\varphi^{\chi_J}=\wt{J}_3S_{z+i^+\bra{\chi_J}}S_{i(\chi_J)}\varphi^{\chi_J}=\wt{J}_3\wt{J}_4S_{z-i(\chi_J^s)}\varphi^{\chi_J}, 
    \end{equation}
    where $0\neq\wt{J}_3\in\OK$ has leading term $J_3\eqdef J\bigbra{z,i^+\bra{\chi_J}}$ and $0\neq\wt{J}_4\in\OK$ has leading term $J_4\eqdef(-1)^{\un{t}^J}J\bigbra{z+i^+\bra{\chi_J},-i(\chi_J)-\un{s}^J}$. Combining \eqref{Constants Eq Lem pr new statement}, \eqref{Constants Eq Lem pr new 1}, \eqref{Constants Eq Lem pr new 2} and \eqref{Constants Eq Lem pr new 3}, we deduce that $0\neq\wt{c}(\chi_J)=\bra{\wt{J}_1\wt{J}_2}/\bra{\wt{J}_3\wt{J}_4}\in K$ has leading term
    \begin{equation*}
        c(\chi_J)=\frac{J_1J_2}{J_3J_4}=\frac{(-1)^{\un{t}^J}J\bigbra{i(\chi_J^s),-\un{s}^J}J\bigbra{z,-i(\chi_J^s)}}{J\bigbra{z,i^+\bra{\chi_J}}J\bigbra{z+i^+\bra{\chi_J},-i(\chi_J)-\un{s}^J}}=(-1)^{\un{t}^J}\frac{J\bigbra{i(\chi_J^s),-\un{s}^J}}{J\bigbra{i^+(\chi_J),-i(\chi_J)-\un{s}^J}},
    \end{equation*}
    where the last equality follows from the formula (applied with $a=z,~b=i^+\bra{\chi_J},~c=-i(\chi_J)-\un{s}^J$)
    \begin{equation*}
        J(a,b)J(a+b,c)=J(a,b+c)J(b,c)\quad\text{for}~a,b,c\in\ZZ,
    \end{equation*}
     which can be deduced from the explicit formula \eqref{Constants Eq Jab}.
\end{proof}

For $J\subseteq\cJ$ such that $J\neq J^*$ and $(J-1)^{\ss}\neq J^{\ss}$, we write
\begin{equation*}
\begin{aligned}
    i^+(J)&\eqdef\ssum_{i=0}^{\ell(J)-1}i^+\bigbra{\chi_{\delta_{\ss}^i(J)}}\in\ZZ;\\
    \beta(J)&\eqdef J\bbra{i^+\bra{\chi_J},i^+\bigbra{\chi_{\delta_{\ss}(J)}},\ldots,i^+\bigbra{\chi_{\delta_{\ss}^{\ell(J)-1}(J)}}}\in\FF\x,
\end{aligned}
\end{equation*}
where each $i^+\bra{\chi_{J'}}$ is defined in either Lemma \ref{Constants Lem pr DL} or Lemma \ref{Constants Lem pr new}.

\begin{proposition}\label{Constants Prop cJ}
    Let $J\subseteq\cJ$ such that $J\nsubseteq J_{\rhobar}$, $J\neq J^*$ and $(J-1)^{\ss}\neq J^{\ss}$. Then we have 
    \begin{equation*}
        c(J)=\frac{\beta(J)}{\beta(J^{\ss})}\frac{\sprod_{i=0}^{\ell(J)-1}c\bigbra{\chi_{\delta_{\ss}^i(J)}}}{\sprod_{i=0}^{\ell(J^{\ss})-1}c\bigbra{\chi_{\delta_{\ss}^i(J^{\ss})}}},
    \end{equation*}
    where $c(J)$ is defined in Example \ref{Constants Ex formula gammaJ} and each $c(\chi_{J'})$ is defined in either Lemma \ref{Constants Lem pr DL} or Lemma \ref{Constants Lem pr new}.
\end{proposition}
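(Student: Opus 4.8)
The plan is to telescope the constant $c(J)$ along the $\delta_{\ss}$-orbit, exactly parallelling the way $\wt{U}_p(J)$ was expressed as a ratio of the $U_p(\chi_{\delta_{\ss}^i(J)})$ in \S\ref{Constants Sec Kisin}. Recall from Example \ref{Constants Ex formula gammaJ} that $c(J)=c(\psi,\psi')$ for the particular cycle $\psi_i=\chi_{\delta_{\ss}^{n-i}(J)}^s$, $\psi_i'=\chi_{\delta_{\ss}^{m-i}(J^{\ss})}^s$ with $n=\ell(J)-1$, $m=\ell(J^{\ss})-1$, and that $c(\psi,\psi')$ is characterized by the relation $\op{pr}_{\psi_n^s}\circ\alpha_n=c(\psi,\psi')\,\op{pr}_{\psi_m^{\prime s}}\circ\alpha_m'$ among the surjections $\theta_0^{R\chi}\to\sigma(R\chi)$ built inductively in \S\ref{Constants Sec Diamond diagram}. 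First I would unwind this inductive definition: each $\alpha_i$ is obtained from $\alpha_{i+1}$ via the square relating $\op{pr}_{\psi_{i+1}}\circ\alpha_{i+1}$ and $\op{pr}_{\psi_i^s}\circ\alpha_i$, so the passage from one node of the orbit to the next is governed by a single "elementary" constant, namely the scalar comparing $S_{i(\chi_{J'}^s)}S_0\varphi^{\chi_{J'}}$ with the appropriate monomial applied to $\varphi^{\chi_{J'}}$. These elementary constants are precisely the $c(\chi_{J'})$ computed in Lemma \ref{Constants Lem pr DL} (when $J'\subseteq J_{\rhobar}$) and Lemma \ref{Constants Lem pr new} (when $J'\nsubseteq J_{\rhobar}$, $J'\neq(J')^*$, $(J'-1)^{\ss}\neq (J')^{\ss}$); note that along the orbit of $J$ every intermediate set satisfies these hypotheses once $J$ does, so the two lemmas cover all the links.

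The key step is then to chain together the identities of Lemma \ref{Constants Lem pr DL} and Lemma \ref{Constants Lem pr new} along the two paths $J\to\delta_{\ss}(J)\to\cdots\to\emptyset$ and $J^{\ss}\to\delta_{\ss}(J^{\ss})\to\cdots\to\emptyset$. Concretely, applying Lemma \ref{Constants Lem pr new} at $\chi_J$ produces $S^+_{i^+(\chi_J)}S_{i(\chi_J)}\varphi^{\chi_J}$ up to the scalar $c(\chi_J)$; the factor $S_{i(\chi_J)}\varphi^{\chi_J}$ is (a scalar multiple of) $\varphi^{\chi_{\delta_{\ss}(J)}}$-type data after projecting, so one repeats with Lemma \ref{Constants Lem pr new} or \ref{Constants Lem pr DL} at $\chi_{\delta_{\ss}(J)}$, accumulating the product $\sprod_{i=0}^{\ell(J)-1}c(\chi_{\delta_{\ss}^i(J)})$ together with a stacked-up $S^+$-operator $S^+_{i^+(\chi_{\delta_{\ss}^0(J))}}\cdots S^+_{i^+(\chi_{\delta_{\ss}^{\ell(J)-1}(J)})}$. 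Using Lemma \ref{Constants Lem S+S+S+}(ii) this composite of $S^+$-operators collapses to $S^+_{i^+(J)}$ times the leading-term constant $\beta(J)=J(i^+(\chi_J),\ldots,i^+(\chi_{\delta_{\ss}^{\ell(J)-1}(J)}))$ — this is exactly where the definitions of $i^+(J)$ and $\beta(J)$ are tailored to fit. Doing the same computation along the $J^{\ss}$-path yields $\beta(J^{\ss})$ and $\sprod_{i=0}^{\ell(J^{\ss})-1}c(\chi_{\delta_{\ss}^i(J^{\ss})})$. Since both paths terminate at $\emptyset$ and the two composites $\op{pr}\circ\alpha$ agree there by construction of the $\alpha_i'$ relative to the $\alpha_i$, equating the two expressions and solving for $c(J)$ gives the claimed ratio. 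One should also check that the residual $S^+$-operator is the same on both sides, i.e. $i^+(J)\equiv i^+(J^{\ss})$ modulo $(q-1)$ (equivalently $S^+_{i^+(J)}=S^+_{i^+(J^{\ss})}$ on the relevant weight), so that these tails cancel and do not contribute to $c(J)$; this is a bookkeeping identity among the indices $i(\chi_{J'}),i(\chi_{J'}^s),\un{s}^{J'}$ analogous to the telescoping already seen in Lemma \ref{Constants Prop UpJ}.

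I expect the main obstacle to be the careful tracking of the $S^+$-stacking and the verification that all intermediate indices avoid multiples of $(q-1)$, so that Lemma \ref{Constants Lem S+S+S+}(ii) applies cleanly and no "extra" $S^+_0$ term (the $J'(a_1,\ldots,a_n)S^+_0$ summand) is produced until the very last step at $\emptyset$, where it is harmless because it dies in the cosocle. This is where the genericity bounds on $\un{r}$ in \eqref{Constants Eq rbarv} and the fact that $J\neq J^*$, $J\nsubseteq J_{\rhobar}$ enter — they guarantee the non-degeneracy hypotheses of Lemmas \ref{Constants Lem SSv}, \ref{Constants Lem S+S+S+} and \ref{Constants Lem pr new} at every node of both orbits. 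Once that bookkeeping is in place, the identity $J(a,b)J(a+b,c)=J(a,b+c)J(b,c)$ from the proof of Lemma \ref{Constants Lem pr new} handles the regrouping of the $\beta$-factors, and the proposition follows by equating the two telescoped expressions.
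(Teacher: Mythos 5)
Your proposal follows essentially the same route as the paper's proof: start from the characterization of $c(J)=c(\psi,\psi')$ in Example \ref{Constants Ex formula gammaJ}, chain Lemma \ref{Constants Lem pr DL} and Lemma \ref{Constants Lem pr new} along the two $\delta_{\ss}$-orbits of $J$ and $J^{\ss}$, collapse the stacked $S^+$-operators with Lemma \ref{Constants Lem S+S+S+}(ii) so that the leading terms produce $\beta(J)/\beta(J^{\ss})$, and equate the two telescoped expressions. The only minor divergence is in two side justifications: the paper obtains $i^+(J)\equiv i^+(J^{\ss})\pmod{q-1}$ by comparing $H$-eigencharacters via Lemma \ref{Constants Lem S+v} (rather than by direct index bookkeeping), and it disposes of the possible extra $S^+_0$-term using the cuspidality of $\theta_0$, which gives $S^+_0(x)=S^+_0(x')=0$ by \cite[Lemma~2.9]{DL21}, rather than an argument about the term dying in the cosocle.
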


\begin{proof}
    Recall from Example \ref{Constants Ex formula gammaJ} that $c(J)=c(\psi,\psi')$ with $\psi_i=\chi_{\delta_{\ss}^{n-i}(J)}^s$ for $0\leq i\leq n\eqdef\ell(J)-1$ and $\psi_i'=\chi_{\delta_{\ss}^{m-i}(J^{\ss})}^s$ for $0\leq i\leq m\eqdef\ell(J^{\ss})-1$. Recall from \S\ref{Constants Sec Diamond diagram} the maps $\op{pr}_{\chi}$, $\alpha_i$ and $\alpha'_i$. If $x\in\theta_0^{R\psi_n^s}$ such that $\op{pr}_{\psi_n^s}\alpha_n(x)=\varphi^{R\psi_n^s}\in\sigma(R\psi_n^s)$, then the proof of \cite[Prop.~5.14]{DL21} (using Lemma \ref{Constants Lem pr DL}, Lemma \ref{Constants Lem pr new} and the cuspidality of $\theta_0$) and the argument that follows show that 
    \begin{equation}\label{Constants Eq Prop cJ 1}
        \op{pr}_{\psi_0}\alpha_0\bbra{\sprod_{i=n}^0\bigbbbra{c\bra{\chi_{\delta_{\ss}^{i}(J)}}}S^+_{i^+\bra{\chi_{\delta_{\ss}^{i}(J)}}}(x)}=\varphi^{R\psi_0}=\varphi^{\chi_{\emptyset}}\in\sigma(\chi_{\emptyset}).
    \end{equation}
    Similarly, if $x'\in\theta_0^{R\psi_m^{\prime s}}$ such that $\op{pr}_{\psi_m^{\prime s}}\alpha_m'(x')=\varphi^{R\psi_m^{\prime s}}\in\sigma(R\psi_m^{\prime s})$, then we have 
    \begin{equation}\label{Constants Eq Prop cJ 2}
        \op{pr}_{\psi_0'}\alpha_0'\bbra{\sprod_{i=m}^0\bigbbbra{c\bra{\chi_{\delta_{\ss}^{i}(J^{\ss})}}}S^+_{i^+\bra{\chi_{\delta_{\ss}^{i}(J^{\ss})}}}(x')}=\varphi^{R\psi_0'}=\varphi^{\chi_{\emptyset}}\in\sigma(\chi_{\emptyset}).
    \end{equation}

    We compare $H$-eigencharacters of \eqref{Constants Eq Prop cJ 1} and \eqref{Constants Eq Prop cJ 2} using Lemma \ref{Constants Lem S+v}. Since both $x$ and $x'$ have $H$-eigencharacters $R\psi_n^s=R\psi_m^{\prime s}=\chi_{J^{\ss}}$,  we deduce that $i^+(J)\equiv i^+(J^{\ss})~\mod~(q-1)$. Since $R\chi_{\delta_{\ss}^i(J)}\neq\chi_{\emptyset}$ for $0\leq i\leq n-1$ and $R\chi_{\delta_{\ss}^i(J^{\ss})}\neq\chi_{\emptyset}$ for $0\leq i\leq m-1$, we deduce that $(q-1)\nmid i^+\bra{\delta_{\ss}^i(J)}$ for $1\leq i\leq n$ and $(q-1)\nmid i^+\bra{\delta_{\ss}^i(J^{\ss})}$ for $1\leq i\leq m$. Moreover, since $\theta_0$ is a cuspidal type, we have $S^+_0(x)=S^+_0(x')=0$ by \cite[Lemma~2.9]{DL21}. Hence by Lemma \ref{Constants Lem S+S+S+}(ii) we have
    \begin{equation}\label{Constants Eq Prop cJ 3}
    \begin{aligned}
        \bbra{\sprod_{i=n}^0S^+_{i^+\bra{\chi_{\delta_{\ss}^{i}(J)}}}}(x)&=\wt{\beta}(J)S^+_{i^+\bra{J}}(x);\\
        \bbra{\sprod_{i=m}^0S^+_{i^+\bra{\chi_{\delta_{\ss}^{i}(J^{\ss})}}}}(x)&=\wt{\beta}(J^{\ss})S^+_{i^+\bra{J^{\ss}}}(x),
    \end{aligned}
    \end{equation}
    where $0\neq\wt{\beta}(J)\in\OK$ (resp.\,$0\neq\wt{\beta}(J^{\ss})\in\OK$) has leading term $\beta(J)$ (resp.\,$\beta(J^{\ss})$). Combining \eqref{Constants Eq Prop cJ 1}, \eqref{Constants Eq Prop cJ 2}, \eqref{Constants Eq Prop cJ 3} and the argument following the proof of \cite[Prop.~5.14]{DL21} we deduce that
    \begin{equation*}
        c(J)=\frac{\beta(J)}{\beta(J^{\ss})}\frac{\sprod_{i=0}^{n}c\bigbra{\chi_{\delta_{\ss}^i(J)}}}{\sprod_{i=0}^{m}c\bigbra{\chi_{\delta_{\ss}^i(J^{\ss})}}},
    \end{equation*}
    which completes the proof.
\end{proof}

\begin{proposition}\label{Constants Prop c'J new}
    Let $J\subseteq\cJ$ such that $J\nsubseteq J_{\rhobar}$, $J\neq J^*$ and $(J-1)^{\ss}\neq J^{\ss}$. Then we have 
    \begin{equation*}
        c'(J)=(-1)^{A(J)}.
    \end{equation*}
\end{proposition}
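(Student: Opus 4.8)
The plan is to prove Proposition \ref{Constants Prop c'J new} by reducing the sign computation to the two building blocks already assembled: the local factorials $c(\chi_{J'})$ coming from Lemma \ref{Constants Lem pr DL} and Lemma \ref{Constants Lem pr new}, and the ``correction'' term $\beta(J)/\beta(J^{\ss})$ from Proposition \ref{Constants Prop cJ}. Concretely, combining the definition $c'(J)=(-1)^{f-1+\abs{J\cap(J-1)^{\nss}}}\mu_{J^s,J}\bigbra{P_2(J)/P_1(J)}c(J)$ with Proposition \ref{Constants Prop cJ}, the whole quantity $c'(J)$ is manifestly a ratio of products of factorials of the form $s^{J'}_j!$ (together with explicit signs), so the only thing to prove is that the \emph{absolute value} is $1$ and the \emph{sign} is $(-1)^{A(J)}$. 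The absolute-value statement is essentially forced: one checks that every factorial appearing in the numerator is matched by an equal factorial in the denominator, using the telescoping identity $\delta_{\ss}^{i+1}(J)^{\ss}=\delta_{\ss}^i((J-1)^{\ss})^{\ss}$-type relations and Lemma \ref{Constants Lem compare sJ}(ii) to rewrite $s^J_j$ in terms of $s^{J^{\ss}}_j$. (This is the same bookkeeping that makes $d(J)$ in Proposition \ref{Constants Prop UpJ} a well-defined unit.)

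First I would expand $c(J)$ via Proposition \ref{Constants Prop cJ} and then substitute the explicit leading terms from Lemma \ref{Constants Lem pr DL} (for $J'\subseteq J_{\rhobar}$, where $c(\chi_{J'})=J(i(\chi_{J'}^s),-\un{s}^{J'})$) and Lemma \ref{Constants Lem pr new} (for $J'\nsubseteq J_{\rhobar}$, where there is an extra $(-1)^{\un{t}^{J'}}$ and a denominator $J(i^+(\chi_{J'}),-i(\chi_{J'})-\un{s}^{J'})$). Using Lemma \ref{Constants Lem Jab} one rewrites $J(i(\chi_{J'}^s),-\un{s}^{J'})$ in the closed form involving $s^{(J'-1)^{\ss}}_j+1$ over $j\in(J')^{\delta}$; a parallel computation—analogous to the one in the proof of Lemma \ref{Constants Lem Jab}, using Lemma \ref{Constants Lem ichiJ+sJ} to identify $i^+(\chi_{J'})=i(\chi_{J'})-i(\chi_{J'}^s)+\un{s}^{J'}$ and its $p$-adic digits—gives $J(i^+(\chi_{J'}),-i(\chi_{J'})-\un{s}^{J'})$ in a similar closed form. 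The factorials of the shape $(s^{J^{\ss}}_j+1)!$ (or their inverses) produced this way should cancel against those produced by the $\mu_{J^s,J}$ factor (use \eqref{Constants Eq muJsJ} rewritten via \eqref{Constants Eq factorial} and Lemma \ref{Constants Lem compare sJ}(ii)) and by $P_2(J)/P_1(J)$ (use the telescoping $P_1(\chi_{\delta_{\ss}^i(J)})$ against $P_1(\chi_{\delta_{\ss}^i(J^{\ss})})$ together with \eqref{Constants Eq factorial}). What remains after cancellation is a pure sign, which I would collect as $(-1)^d$ with $d$ a sum of explicit indicator counts, exactly as in the proof of Proposition \ref{Constants Prop c'J simple}.

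Then the final step is to evaluate $d$ modulo $2$ and match it with $A(J)=A^{\ss}(J)+\sum_{j\notin J_{\rhobar}}\delta_{j\in\partial J}$ from \eqref{Constants Eq ABd}. I expect the exponents of the form $\un{t}^J+\un{t}^{J^s}+(\text{various }\un{s})$ to disappear by Lemma \ref{Constants Lem t+t+s} (which gives $(-1)^{\un{t}^J+\un{t}^{J^s}+\un{s}^{J'}}=1$), leaving a count of ``boundary crossings''. The bookkeeping of these crossings along the orbits $J\rightsquigarrow\delta_{\ss}(J)\rightsquigarrow\cdots\rightsquigarrow\emptyset$ and $J^{\ss}\rightsquigarrow\cdots\rightsquigarrow\emptyset$—tracking for each $j\in J_{\rhobar}$ the quantity $\delta_{j+k(j)\in\partial(J^c)}$ and for each $j\notin J_{\rhobar}$ the quantity $\delta_{j\in\partial J}$, in the spirit of the computation of $A(J)$ inside the proof of Proposition \ref{Constants Prop UpJ}—should reassemble precisely into $A^{\ss}(J)+\sum_{j\notin J_{\rhobar}}\delta_{j\in\partial J}$.

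The main obstacle will be this last combinatorial reconciliation in the genuinely non-semisimple regime $(J-1)^{\ss}\neq J^{\ss}$: here the two chains $\delta_{\ss}^\bullet(J)$ and $\delta_{\ss}^\bullet(J^{\ss})$ have different lengths $\ell(J)=\ell(J^{\ss})+(\text{length of the trailing }J_{\rhobar}\text{-interval})$, so the telescoping is only partial and the surviving factorials and signs are controlled by the interval decomposition $J_{\rhobar}=J_1\sqcup\cdots\sqcup J_t$. I would handle this exactly as in Proposition \ref{Constants Prop UpJ}: fix $j$, set $k=k(j)$ to be the length of the maximal $J_{\rhobar}$-block following $j$, split into the cases $j\in J_{\rhobar}$ and $j\notin J_{\rhobar}$, compute the relevant indicator sums $\sum_{i'=0}^{k}\delta_{j+i'\in\partial J}$ etc., and verify the per-$j$ contribution matches the per-$j$ summand of $A(J)$; the crux is that the ``extra'' factor $J(i^+(\chi_{J'}),-i(\chi_{J'})-\un{s}^{J'})$ from Lemma \ref{Constants Lem pr new} is exactly what accounts for the discrepancy between $\beta(J)$ and $\beta(J^{\ss})$ so that no factorial survives. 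Once the per-$j$ bookkeeping is done, summing over $j$ and applying Lemma \ref{Constants Lem t+t+s} one last time to kill the $\un{t}$-terms yields $c'(J)=(-1)^{A(J)}$.
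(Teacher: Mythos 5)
Your plan is essentially the paper's own proof: the paper likewise expands $c'(J)$ via Proposition \ref{Constants Prop cJ}, rewrites $\mu_{J^s,J}$, $P_2(J)/P_1(J)$, $\beta(J)/\beta(J^{\ss})$ and the local terms $c(\chi_{J'})$ as explicit sign-times-factorial expressions (using Lemma \ref{Constants Lem Jab}, Lemma \ref{Constants Lem ichiJ+sJ}, \eqref{Constants Eq factorial} and Lemma \ref{Constants Lem t+t+s}), collects the result as $(-1)^{U(J)}\alpha'(J)\alpha(J)$, and then checks $U(J)=A^{\ss}(J)$, $\alpha'(J)=(-1)^{\abs{(\partial J)^{\nss}}}\prod_j\alpha'(J)_j$ and $\alpha(J)\prod_j\alpha'(J)_j=1$ by exactly the per-$j$, $k(j)$-indexed case analysis along the two $\delta_{\ss}$-chains that you outline (Lemma \ref{Constants Lem Prop c'J new}). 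The one caveat is that you defer this case-by-case bookkeeping, which is where the bulk of the paper's argument (the tables in \S\ref{Constants Sec c'J}) actually lies, but your reduction and the intended method for it coincide with the paper's.
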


\begin{proof} 
By the definition $c'(J)$ and Proposition \ref{Constants Prop cJ} we have
\begin{equation}\label{Constants Eq Prop c'J new 1}
    c'(J)=(-1)^{f-1+\abs{J\cap(J-1)^{\nss}}}\mu_{J^s,J}\frac{P_2(J)}{P_1(J)}\frac{\beta(J)}{\beta(J^{\ss})}\frac{\sprod_{i=0}^{\ell(J)-1}c\bigbra{\chi_{\delta_{\ss}^i(J)}}}{\sprod_{i=0}^{\ell(J^{\ss})-1}c\bigbra{\chi_{\delta_{\ss}^i(J^{\ss})}}}.
\end{equation}
Recall from the proof of Proposition \ref{Constants Prop cJ} that $i^+(J)\equiv i^+(J^{\ss})~\mod~(q-1)$, hence by \eqref{Constants Eq Ja1an} we have
\begin{equation}\label{Constants Eq Prop c'J new 2}
    \frac{\beta(J)}{\beta(J^{\ss})}=\frac{(-1)^{u(J)}}{(-1)^{u(J^{\ss})}}\frac{\bbra{(-1)^{f-1}\sprod_{j=0}^{f-1}\bigbra{i^+\bra{\chi_J}_j}!}\sprod_{i=1}^{\ell(J)-1}\!\bbra{(-1)^{f-1}\sprod_{j=0}^{f-1}\bigbra{i^+\bra{\chi_{\delta_{\ss}^i(J)}}_j}!}}{\sprod_{i=0}^{\ell(J^{\ss})-1}\bbra{(-1)^{f-1}\sprod_{j=0}^{f-1}\!\bigbra{i^+\bra{\chi_{\delta_{\ss}^i(J^{\ss})}}_j}!}},
\end{equation}
where $u(J)\eqdef u\bigbra{i^+\bra{\chi_J},i^+\bra{\chi_{\delta_{\ss}(J)}},\ldots,i^+\bra{\chi_{\delta_{\ss}^{\ell(J)-1}(J)}}}\in\ZZ$, and similar for $u(J^{\ss})$. Moveover, by Lemma \ref{Constants Lem pr DL}, Lemma \ref{Constants Lem pr new} and \eqref{Constants Eq Jab} we have
\begin{equation}\label{Constants Eq Prop c'J new 3}
\begin{aligned}
    \sprod_{j=0}^{f-1}\bigbra{i^+\bra{\chi_{J'}}_j}!&=\sprod_{j=0}^{f-1}\bigbra{p-1-i(\chi_{J'}^s)_j}!\hspace{1cm}\text{for}~J'\subseteq J_{\rhobar};\\
    \frac{\sprod_{j=0}^{f-1}\bigbra{i^+\bra{\chi_J}_j}!}{J\bigbra{i^+\bra{\chi_J},-i(\chi_J)-\un{s}^J}}&=(-1)^{f-1+u'(J)}\frac{\sprod_{j=0}^{f-1}\bigbra{p-1-i(\chi_J^s)_j}!}{\sprod_{j=0}^{f-1}\bigbra{(-i(\chi_J)-\un{s}^J)_j}!},
\end{aligned}
\end{equation}
where $u'(J)\eqdef u\bigbra{i^+\bra{\chi_J},-i(\chi_J)-\un{s}^J}$. Combining \eqref{Constants Eq Prop c'J new 1}, \eqref{Constants Eq Prop c'J new 2}, \eqref{Constants Eq Prop c'J new 3} and the definition of each $c(\chi_J)$ in Lemma \ref{Constants Lem pr DL} and Lemma \ref{Constants Lem pr new}, we deduce that 
\begin{equation*}
    c'(J)=(-1)^{U(J)}\alpha'(J)\alpha(J),
\end{equation*}
where
\begin{equation}\label{Constants Eq Prop c'J new 4}
\begin{aligned}
    U(J)&\eqdef u(J)-u(J^{\ss})-u'(J)\in\ZZ;\\
    \alpha'(J)&\eqdef(-1)^{|J\cap(J-1)^{\nss}|+\un{t}^J}\mu_{J^s,J}P_2(\chi_J)/\bbra{\sprod_{j=0}^{f-1}\bigbra{(-i(\chi_J)-\un{s}^J)_j}!}\in\FF\x;\\
    \alpha(J)&\eqdef\bbra{\sprod_{i=0}^{\ell(J)-1}\alpha\bigbra{\chi_{\delta_{\ss}^i(J)}}}/\bbra{\sprod_{i=0}^{\ell(J^{\ss})-1}\alpha\bigbra{\chi_{\delta_{\ss}^i(J^{\ss})}}}\in\FF\x,
\end{aligned}
\end{equation}
with (for each $J'\neq J^*$)
\begin{equation*}
    \alpha(\chi_{J'})\eqdef J\bigbra{i(\chi_{J'}^s),-\un{s}^{J'}}\bbra{(-1)^{f-1}\sprod_{j=0}^{f-1}\bigbra{p-1-i(\chi_{J'}^s)}!}/P_1(\chi_{J'})\in\FF\x.
\end{equation*}
Then the proposition follows from an explicit computation of the constants $U(J)$, $\alpha'(J)$ and $\alpha(J)$, which is given in Lemma \ref{Constants Lem Prop c'J new} below.
\end{proof}

\subsection{Explicit computations}

We prove Lemma \ref{Constants Lem Prop c'J new} below, which will finish the proof of Proposition \ref{Constants Prop c'J new}. To state the result, for $J\subseteq\cJ$ and $j\in\cJ$ we define (in $\FF\x$)
\begin{equation*}
    \alpha'(J)_j\eqdef
\begin{cases}
    1&\text{if}~j\notin J^{\nss},~j+1\notin J^{\nss}\\
    r_j+1&\text{if}~j\in J^{\nss},~j+1\notin J\\
    p-1-r_j&\text{if}~j\in J^{\nss},~j+1\in J^{\ss}\\
    -\bigbra{r_j!(r_j+1)!}\inv&\text{if}~j\notin J,~j+1\in J^{\nss}\\
    -\bigbra{(r_j+1)!(r_j+2)!}\inv&\text{if}~j\in J^{\ss},~j+1\in J^{\nss}\\
    (r_j!)^{-2}&\text{if}~j\in J^{\nss},~j+1\in J^{\nss}.
\end{cases}
\end{equation*}
    
\begin{lemma}\label{Constants Lem Prop c'J new}
    Let $J\subseteq\cJ$ such that $J\nsubseteq J_{\rhobar}$, $J\neq J^*$ and $(J-1)^{\ss}\neq J^{\ss}$. Let $U(J)\in\ZZ$, $\alpha'(J),\alpha(J)\in\FF\x$ be as in \eqref{Constants Eq Prop c'J new 4}. Then we have
\begin{enumerate}
    \item 
    $U(J)=A^{\ss}(J)$ (see \eqref{Constants Eq AssJ} for $A^{\ss}(J)$);
    \item 
    $\alpha'(J)=(-1)^{\abs{\bra{\partial J}^{\nss}}}\sprod_{j=0}^{f-1}\alpha'(J)_j$;
    \item
    $\alpha(J){\sprod_{j=0}^{f-1}\alpha'(J)_j}=1$.
\end{enumerate}
\end{lemma}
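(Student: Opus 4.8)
The plan is to prove the three identities separately by reducing each one to a bookkeeping problem in terms of the combinatorial invariants $s^J_j$, $t^J_j$, and the sets $J^{\ss}$, $J^{\nss}$, $J_{\rhobar}$, $\partial J$, etc., exactly as in the case-by-case style already used for Lemma \ref{Constants Lem J0 Delta J} and Proposition \ref{Constants Prop c'J simple}. For part (i), I would first expand $u(J)$, $u(J^{\ss})$ and $u'(J)$ using the definition \eqref{Constants Eq Ja1an} of $u(a_1,\ldots,a_n)$. The key observation is that the quantities $i^+(\chi_{J'})$ for $J'$ running over the chain $J, \delta_{\ss}(J),\ldots$ are controlled embedding-by-embedding: using the explicit formulas for $i^+(\chi_{J'})$ from Lemma \ref{Constants Lem pr DL} (when $J'\subseteq J_{\rhobar}$) and Lemma \ref{Constants Lem pr new} (otherwise), each $i^+(\chi_{J'})_j$ is a Teichm\"uller digit depending only on whether $j\in J'$, $j+1\in J'$, $j\in J_{\rhobar}$. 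Then $u(J)$ is a sum over $j$ of ``carry counts'' along the telescoping chain; the differences $u(J)-u(J^{\ss})$ are supported at the embeddings where the chain for $J$ is one step longer than the chain for $J^{\ss}$, and $u'(J)$ accounts for the extra $S$-operator appearing in Lemma \ref{Constants Lem pr new}. After the dust settles the surviving contributions should be exactly those indexed by maximal intervals $J_i\subseteq J_{\rhobar}$ whose right endpoint $j_i+k_i$ lies in $\partial(J^c)$, each contributing $k_i+1$, which is the definition \eqref{Constants Eq AssJ} of $A^{\ss}(J)$.

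For part (ii), I would substitute into $\alpha'(J)$ the formula \eqref{Constants Eq Prop c'J simple 1} for $\mu_{J^s,J}$ (which comes from \eqref{Constants Eq muJsJ} and \eqref{Constants Eq factorial}), the definition $P_2(\chi_J)=\prod_{j+1\notin J^{\nss}}(p-1-s^{J^{\ss}}_j)!$ from \eqref{Constants Eq P2}, and the value of $\bigbra{-i(\chi_J)-\un{s}^J}_j$ obtained from Lemma \ref{Constants Lem ichiJ+sJ}, which gives $(-i(\chi_J)-\un{s}^J)_j = \delta_{j+1\notin J^{\nss}}(p-1-s^{J^{\ss}}_j) + \delta_{j+1\in J^{\nss}}(\ast)$; one must track the digit carefully in the two cases $j+1\in J^{\nss}$ and $j+1\notin J^{\nss}$. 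The factorials then cancel in pairs wherever $j+1\notin J^{\nss}$, and at the embeddings where $j+1\in J^{\nss}$ one is left with ratios of factorials of the shape $(r_j!)^{-2}$, $(r_j+1)!^{-1}r_j!^{-1}$, etc., which after applying \eqref{Constants Eq factorial} and Lemma \ref{Constants Lem compare sJ}(ii) to rewrite $s^{J^{\ss}}_j$ in terms of $r_j$ (it equals $r_j$, $r_j+1$, $p-2-r_j$, or $p-1-r_j$ according to the membership of $j,j+1$ in $J^{\ss}$) match the five nonzero cases of $\alpha'(J)_j$; the sign $(-1)^{|\bra{\partial J}^{\nss}|}$ is collected from the $(-1)^{r+1}$ factors in \eqref{Constants Eq factorial} together with the $(-1)^{\un{t}^J}$ and $(-1)^{|J\cap(J-1)^{\nss}|}$ prefactors, using Lemma \ref{Constants Lem t+t+s}.

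For part (iii), I would first simplify $\alpha(\chi_{J'})$ for a single $J'$: combining its definition with $P_1(\chi_{J'})=\prod_{j+1\in J'\Delta(J'-1)^{\ss}}(p-1-s^{(J'-1)^{\ss}}_j)!$ and the explicit value of $J\bigbra{i(\chi_{J'}^s),-\un{s}^{J'}}$ from Lemma \ref{Constants Lem Jab}, most factorials telescope and $\alpha(\chi_{J'})$ becomes a short product of linear factors in the $s^{(J'-1)^{\ss}}_j$'s, i.e.\ essentially in the $r_j$'s, indexed by the ``boundary'' embeddings of $J^{\delta}=J'\Delta(J'-1)^{\ss}$. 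The product $\alpha(J)=\prod_{i}\alpha(\chi_{\delta_{\ss}^i(J)})/\prod_i \alpha(\chi_{\delta_{\ss}^i(J^{\ss})})$ then telescopes along the $\delta_{\ss}$-chains in a manner parallel to the proof of Proposition \ref{Constants Prop UpJ} (where the analogous telescoping was carried out for $d(J)$ and the sign $A(J)$): for a fixed embedding $j$ one walks along $j, j+1, j+2,\ldots$ through $J_{\rhobar}$ until leaving it, and the contributions at the intermediate steps cancel between numerator and denominator, leaving a boundary term at $j$ and at the first exit point $j+k(j)+1$. What survives should be exactly $\bigbra{\prod_{j}\alpha'(J)_j}\inv$, case by case in the membership pattern of $j,j+1$. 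I expect part (iii) to be the main obstacle: it is the one statement that genuinely mixes the telescoping over the $\delta_{\ss}$-orbit (as in Proposition \ref{Constants Prop UpJ}) with the single-step factorial identities, so the indexing of which $r_j$-factor lands in which slot is the most delicate, and one has to be careful that the exceptional character $J^*$ and the hypotheses $J\nsubseteq J_{\rhobar}$, $(J-1)^{\ss}\neq J^{\ss}$ are used precisely where needed (so that all the $(q-1)\nmid\cdots$ non-degeneracy conditions in Lemmas \ref{Constants Lem SSv}, \ref{Constants Lem pr new} hold along the whole chain). Once (i), (ii), (iii) are in hand, the identity $c'(J)=(-1)^{U(J)}\alpha'(J)\alpha(J)=(-1)^{A^{\ss}(J)}(-1)^{|\bra{\partial J}^{\nss}|}=(-1)^{A(J)}$ follows from the definition \eqref{Constants Eq ABd} of $A(J)=A^{\ss}(J)+\sum_{j\notin J_{\rhobar}}\delta_{j\in\partial J}$, since $\sum_{j\notin J_{\rhobar}}\delta_{j\in\partial J}=|\bra{\partial J}^{\nss}|$.
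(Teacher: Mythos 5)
Your plan is the same route as the paper's: for (i) expand $u(J)$, $u(J^{\ss})$, $u'(J)$ from \eqref{Constants Eq Jab}--\eqref{Constants Eq Ja1an}, reduce to a per-embedding carry count that telescopes along the $\delta_{\ss}$-chain (using $p-1-i^+(\chi_{J'})_j=i(\chi_{J'}^s)_j$ for $J'\subseteq J_{\rhobar}$ and $i^+(J)\equiv i^+(J^{\ss})$ mod $q-1$), leaving one contribution $k_i+1$ for each maximal interval $J_i\subseteq J_{\rhobar}$ whose right endpoint lies in $\partial(J^c)$; for (ii) substitute \eqref{Constants Eq Prop c'J simple 1}, \eqref{Constants Eq P2} and Lemma \ref{Constants Lem ichiJ+sJ} and collect signs via \eqref{Constants Eq factorial} and Lemma \ref{Constants Lem t+t+s}; for (iii) telescope along the $\delta_{\ss}$-orbit in parallel with Proposition \ref{Constants Prop UpJ}. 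Two details of your sketch would not go through as written. First, a small one: in (ii) the digit you record is off --- by Lemma \ref{Constants Lem ichiJ+sJ}, $\bigbra{i(\chi_J)+\un{s}^J}_j=\delta_{j+1\in J^{\nss}}\bigbra{p-1-s^{J^{\ss}}_j}$, so $\bigbra{-i(\chi_J)-\un{s}^J}_j$ equals $p-1$ when $j+1\notin J^{\nss}$ and $s^{J^{\ss}}_j$ when $j+1\in J^{\nss}$ (not $p-1-s^{J^{\ss}}_j$ when $j+1\notin J^{\nss}$); this is exactly what produces the factor $(-1)^{f-\abs{J^{\nss}}}\sprod_{j+1\in J^{\nss}}\bigbra{s^{J^{\ss}}_j}!$ in the paper's computation.

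The substantive issue is in (iii): the cancellation is not a clean per-embedding identity governed by the membership of $j,j+1$ in $J$ alone. In the paper's computation the local factor $\alpha(J)_j$ also depends on the membership of $j+k+1$ and $j+k+2$ in $J$, where $j+1,\ldots,j+k+1$ is the maximal run inside $J_{\rhobar}$ starting at $j+1$, and $\alpha(J)_j\alpha'(J)_j$ is \emph{not} always $1$: it equals $-1$ at $j\notin J_{\rhobar}$, $j+1\in J_{\rhobar}$ whenever the exit pattern is $j+k+1\notin J$, $j+k+2\in J$ (an extra sign coming from $\bigbra{r_j!(p-2-r_j)!}^2(r_j+1)(p-1-r_j)=-1$), and it also equals $-1$ at $j\in J_{\rhobar}$, $j+1\notin J_{\rhobar}$ with $j\notin J$, $j+1\in J$. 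The identity $\alpha(J)\sprod_{j=0}^{f-1}\alpha'(J)_j=1$ holds only because these two kinds of $-1$ are indexed by the same set (one at the entry, one at the exit of each relevant $J_{\rhobar}$-interval), so they occur in pairs and the total exponent is even. So your step ``what survives should be exactly $\bigbra{\sprod_j\alpha'(J)_j}\inv$, case by case in the membership pattern of $j,j+1$'' must be replaced by this paired-boundary bookkeeping (the paper's Cases 2 and 4, with tables keyed to $\bigbra{\delta_{j+k+1\in J},\delta_{j+k+2\in J}}$); with that correction, your outline is the paper's argument.
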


\hspace{\fill}

In the rest of this subsection, we prove Lemma \ref{Constants Lem Prop c'J new}. We start with some more notation that are needed in the proof. For $J\subseteq\cJ$ and $j\in\cJ$, we define
\begin{equation*}
\begin{aligned}
    I(J)^{1}_j&\eqdef\sset{i\geq0:j+1\in\delta_{\ss}^i(J),~j+1\notin\delta_{\ss}^{i+1}(J)};\\
    I(J)^{2}_j&\eqdef\sset{i\geq0:j+1\notin\delta_{\ss}^i(J),~j+1\in\delta_{\ss}^{i+1}(J)};\\
    I(J)^{3}_j&\eqdef\sset{i\geq0:j+1\in\delta_{\ss}^i(J),~j+1\in\delta_{\ss}^{i+1}(J)};\\
    I(J)^{4}_j&\eqdef\sset{i\geq0:j+1\notin\delta_{\ss}^i(J),~j+1\notin\delta_{\ss}^{i+1}(J)}.
\end{aligned}
\end{equation*}
Since $j\in\delta_{\ss}^{i+1}(J)$ if and only if $(j+1\in\delta_{\ss}^{i}(J)~\text{and}~j\in J_{\rhobar})$, by \eqref{Constants Eq sJ} we have
\begin{equation}\label{Constants Eq s for deltass}
    s^{\delta_{\ss}^{i+1}(J)}_j=
\begin{cases}
    r_j+\delta_{j\in J_{\rhobar}}&\text{if}~i\in I(J)^{1}_j\\
    p-2-r_j&\text{if}~i\in I(J)^{2}_j\\
    p-2-r_j-\delta_{j\in J_{\rhobar}}&\text{if}~i\in I(J)^{3}_j\\
    r_j&\text{if}~i\in I(J)^{4}_j.
\end{cases}
\end{equation}
Then we define
\begin{equation*}
\begin{aligned}
    I(J)^{t+4}_j&\eqdef I(J)^t_j\cap\bigset{i\geq0:j\notin\bra{\delta_{\ss}^i(J)}^{\delta}}&\text{for}~t\in\set{1,2};\\
    I(J)^{t+4}_j&\eqdef I(J)^t_j\cap\bigset{i\geq0:j\in\bra{\delta_{\ss}^i(J)}^{\delta}}&\text{for}~t\in\set{3,4}.
\end{aligned}    
\end{equation*}
We also define
\begin{equation*}
\begin{aligned}
    i(J)^{t}_j&\eqdef\bigabs{I(J)^{t}_j}-\bigabs{I(J^{\ss})^{t}_j}&\text{for}~t\in\set{1,2};\\
    i(J)^{t}_j&\eqdef\bbra{\bigabs{I(J)^{t+2}_j}-\bigabs{I(J^{\ss})^{t+2}_j}}-\bbra{\bigabs{I(J)^{t+4}_j}-\bigabs{I(J^{\ss})^{t+4}_j}}&\text{for}~t\in\set{3,4}.
\end{aligned}    
\end{equation*}
Finally, for $t\in\set{1,2,5,6,7,8}$ we define
\begin{equation*}
    I\cc(J)^t_j\eqdef I(J)^t_j\cap\set{0}.
\end{equation*}
If moreover $j+1\in J_{\rhobar}$, we let $k\geq0$ be such that $j+i+1\in J_{\rhobar}$ for $0\leq i\leq k$ and $j+k+2\notin J_{\rhobar}$. Then we define
\begin{equation*}
    I^{\geq}(J)^t_j\eqdef\set{i\geq0:i+k\in I(J)^t_j}.
\end{equation*}

\hspace{\fill}

\proof[Proof of Lemma \ref{Constants Lem Prop c'J new}(i)] 

Recall that $U(J)=u(J)-u(J^{\ss})-u'(J)$ with $u(J)$ and $u(J^{\ss})$ defined in \eqref{Constants Eq Prop c'J new 2} and $u'(J)$ defined in \eqref{Constants Eq Prop c'J new 3}. By \eqref{Constants Eq Jab} we have
\begin{equation}\label{Constants Eq Lem Prop 1-1}
\begin{aligned}
    u'(J)&=(p-1)\inv\ssum_{j=0}^{f-1}\Bigbra{p\!-\!1\!-\!\bigbra{i^+\bra{\chi_J}_j\!+\!\bra{-i(\chi_J)\!-\!\un{s}^J}_j\!-\!\bra{-i(\chi_J^s)}_j}\!}\\
    &=(p-1)\inv\ssum_{j=0}^{f-1}\Bigbra{\!\bigbra{p\!-\!1\!-\!i^+\bra{\chi_J}_j}\!-\!\bigbra{p\!-\!1\!-\!\bra{i(\chi_J)\!+\!\un{s}^J}_j}\!+\!\bigbra{p\!-\!1\!-\!i(\chi_J^s)_j}\!}.
    \end{aligned}
\end{equation}
By Lemma \ref{Constants Lem pr DL} we have $p-1-i^+(\chi_{J'})_j=i(\chi_{J'}^s)_j$ for $\emptyset\neq J'\subseteq J_{\rhobar}$ and $j\in\cJ$. Hence by \eqref{Constants Eq Ja1an} we have
\begin{equation}\label{Constants Eq Lem Prop 1-2}
\begin{aligned}
    u(J)&=(p-1)\inv\ssum_{j=0}^{f-1}\bbra{\!\bigbra{p\!-\!1\!-\!i^+\bra{\chi_J}_j}\!+\!\bigbra{\ssum_{i=1}^{\ell(J)-1}i\bra{\chi_{\delta_{\ss}^i(J)}^s}_j}\!-\!\bigbra{p\!-\!1\!-\!i^+(J)_j}\!};\\
    u(J^{\ss})&=(p-1)\inv\ssum_{j=0}^{f-1}\bbra{\!\bigbra{\ssum_{i=0}^{\ell(J^{\ss})-1}i\bra{\chi_{\delta_{\ss}^i(J^{\ss})}^s}_j}\!-\!\bigbra{p\!-\!1\!-\!i^+(J^{\ss})_j}\!}.
\end{aligned}
\end{equation}
Moreover, from the proof of Proposition \ref{Constants Prop cJ} we have $i^+(J)\equiv i^+(J^{\ss})~\mod~(q-1)$. Combining \eqref{Constants Eq Lem Prop 1-1} and \eqref{Constants Eq Lem Prop 1-2} we deduce that
\begin{equation*}
    U(J)=u(J)-u(J^{\ss})-u'(J)=(p-1)\inv\ssum_{j=0}^{f-1}U(J)_j,
\end{equation*}
where (for each $j\in\cJ$)
\begin{equation}\label{Constants Eq Lem Prop 1-4}
    U(J)_j\eqdef\ssum_{i=0}^{\ell(J)-1}i\bigbra{\chi_{\delta_{\ss}^i(J)}^s}_j-\ssum_{i=0}^{\ell(J^{\ss})-1}i\bigbra{\chi_{\delta_{\ss}^i(J^{\ss})}^s}_j-\bigbra{i(\chi_J)+\un{s}^J}_j\in\ZZ.
\end{equation}
By definition and using \eqref{Constants Eq s for deltass}, we have
\begin{equation}\label{Constants Eq Lem Prop 1-5}
\begin{aligned}
    \ssum_{i=0}^{\ell(J)-1}i\bigbra{\chi_{\delta_{\ss}^i(J)}^s}_j&=\ssum_{i\geq0}\delta_{j\in\bra{\delta_{\ss}^i(J)}^{\delta}}\bigbra{p\!-\!1\!-\!s^{\delta_{\ss}^{i+1}(J)}_j}=\ssum_{i\in I(J)^1_j\sqcup I(J)^2_j}\bigbra{p\!-\!1\!-\!s^{\delta_{\ss}^{i+1}(J)}_j}\\
    &=\babs{I(J)^1_j}\bigbra{p-1-r_j-\delta_{j\in J_{\rhobar}}}+\babs{I(J)^2_j}\bra{r_j+1}.
\end{aligned}
\end{equation}
Similarly, we have
\begin{equation}\label{Constants Eq Lem Prop 1-6}
    \ssum_{i=0}^{\ell(J^{\ss})-1}i\bigbra{\chi_{\delta_{\ss}^i(J^{\ss})}^s}_j=\babs{I(J^{\ss})^1_j}\bigbra{p-1-r_j-\delta_{j\in J_{\rhobar}}}+\babs{I(J^{\ss})^2_j}\bra{r_j+1}.
\end{equation}
Moreover, by Lemma \ref{Constants Lem ichiJ+sJ} and \eqref{Constants Eq sJ} we have
\begin{equation}\label{Constants Eq Lem Prop 1-7}
    \bigbra{i(\chi_J)+\un{s}^J}_j=\delta_{j+1\in J^{\nss}}\bigbra{p-1-s^{J^{\ss}}_j}=\delta_{j+1\in J^{\nss}}\bigbra{p-1-r_j-\delta_{j\in J^{\ss}}}.
\end{equation}
Combining \eqref{Constants Eq Lem Prop 1-4}, \eqref{Constants Eq Lem Prop 1-5}, \eqref{Constants Eq Lem Prop 1-6} and \eqref{Constants Eq Lem Prop 1-7} we deduce that
\begin{equation}\label{Constants Eq Lem Prop 1-8}
    U(J)_j=i(J)^1_j\bigbra{p-1-r_j-\delta_{j\in J_{\rhobar}}}+i(J)^2_j\bra{r_j+1}-\delta_{j+1\in J^{\nss}}\bigbra{p-1-r_j-\delta_{j\in J^{\ss}}}.
\end{equation}
To compute each $U(J)_j$ explicitly, we separate the following cases.

\hspace{\fill}

\noindent\textbf{Case 1.} $j+1\in J_{\rhobar}$.

Let $k\geq0$ such that $j+i+1\in J_{\rhobar}$ for $0\leq i\leq k$ and $j+k+2\notin J_{\rhobar}$. By \eqref{Constants Eq j in deltass} we have
\begin{equation}\label{Constants Eq Lem Prop 1-9}
\begin{aligned}
    j+1\in\delta_{\ss}^i(J)&\iff\bra{0\leq i\leq k+1~\text{and}~j+i+1\in J};\\
    j+1\in\delta_{\ss}^i(J^{\ss})&\iff\bra{0\leq i\leq k~\text{and}~j+i+1\in J}.
\end{aligned}
\end{equation}
In particular, for $t\in\set{1,2}$ we have $i\in I(J)^{t}_j\iff i\in I(J^{\ss})^{t}_j$ for $0\leq i\leq k-1$, hence
\begin{equation}\label{Constants Eq Lem Prop 1-10}
    \bigabs{I(J)^t_j}-\bigabs{I(J^{\ss})^t_j}=\bigabs{I^{\geq}(J)^t_j}-\bigabs{I^{\geq}(J^{\ss})^t_j}.
\end{equation}
We denote $\op{ch}^1_J\eqdef\bigbra{\delta_{j+k+1\in J},\delta_{j+k+2\in J}}\in\set{0,1}^2$. Combining \eqref{Constants Eq Lem Prop 1-8}, \eqref{Constants Eq Lem Prop 1-9}, \eqref{Constants Eq Lem Prop 1-10} and a case-by-case examination we get the following table.
\begin{figure}[H]
    \centering
\begin{tabular}{|c|c|c|c|c|}
    \hline
    $\op{ch}^1_J$&$I^{\geq}(J)^{1,2}_j$&$I^{\geq}(J^{\ss})^{1,2}_j$&$i(J)^{1,2}_j$&$U(J)_j$\\
    \hline
    $(1,1)$&$\set{1},\emptyset$&$\set{0},\emptyset$&$0,0$&$0$\\
    \hline
    $(1,0)$&$\set{0},\emptyset$&$\set{0},\emptyset$&$0,0$&$0$\\
    \hline
    $(0,1)$&$\set{1},\set{0}$&$\emptyset,\emptyset$&$1,1$&$p-\delta_{j\in J_{\rhobar}}$\\
    \hline
    $(0,0)$&$\emptyset,\emptyset$&$\emptyset,\emptyset$&$0,0$&$0$\\
    \hline
\end{tabular}
\end{figure}

\hspace{\fill}

\noindent\textbf{Case 2.} $j+1\notin J_{\rhobar}$.

In this case, we have $j+1\notin\delta_{\ss}^i(J)$ for $i\geq1$ and $j+1\notin\delta_{\ss}^i(J^{\ss})$ for $i\geq0$. Hence $I(J)^1_j=\set{0}$ if $j+1\in J$, $I(J)^1_j=\emptyset$ if $j+1\notin J$, and $I(J)^2_j=I(J^{\ss})^1_j=I(J^{\ss})^2_j=\emptyset$. By \eqref{Constants Eq Lem Prop 1-8} we have
\begin{equation*}
\begin{aligned}
    U(J)_j&=\delta_{j+1\in J}\bigbra{p-1-r_j-\delta_{j\in J_{\rhobar}}}-\delta_{j+1\in J^{\nss}}\bigbra{p-1-r_j-\delta_{j\in J^{\ss}}}\\
    &=-\delta_{j+1\in J}\bigbra{\delta_{j\in J_{\rhobar}}-\delta_{j\in J^{\ss}}}=-\delta_{j+1\in J^{\nss},j\in J_{\rhobar}\setminus J}.
\end{aligned}
\end{equation*}

\vspace{0.2cm}

As in \S\ref{Constants Sec Kisin}, we decomposition $J_{\rhobar}$ into a disjoint union of intervals (in $\ZZ/f\ZZ$) not adjacent to each other $J_{\rhobar}=J_1\sqcup\ldots\sqcup J_t$, and for each $1\leq i\leq t$ we write $J_i=\set{j_i,j_i+1,\ldots,j_i+k_i}$ with $j_i\in\cJ$ and $k_i\geq0$. Combining Case 1 and Case 2, we get
\begin{equation*}
\begin{aligned}
    U(J)&=(p-1)\inv\ssum_{j=0}^{f-1}U(J)_j=(p-1)\inv\ssum_{i=1}^{t}\bigbra{\ssum_{j=j_i}^{j_i+k_i+1}U(J)_j}\\
    &=(p-1)\inv\ssum_{i=1}^{t}\Bigbra{\delta_{j_i+k_i\in\partial(J^c)}\bigbra{p+k_i(p-1)-1}}=\ssum_{i=1}^{t}\bigbra{\delta_{j_i+k_i\in\partial(J^c)}(k_i+1)},
\end{aligned}
\end{equation*}
which equals $A^{\ss}(J)$ by \eqref{Constants Eq AssJ}.\qed

\hspace{\fill}

\proof[Proof of Lemma \ref{Constants Lem Prop c'J new}(ii)]

As in \eqref{Constants Eq Prop c'J simple 1} we have
\begin{equation}\label{Constants Eq Lem Prop 2-1}
    \mu_{J^s,J}=(-1)^{\un{t}^{J^s}+\sum_{j+1\in J^{\nss}}\bra{s^{J^{\ss}}_j+\delta_{j\in J^{\nss}}}}\bbbra{\frac{\sprod_{j+1\notin J^{\nss}}\bra{s^{J^{\ss}}_j+\delta_{j\in J^{\nss}}}!}{\sprod_{j+1\in J^{\nss}}\bra{s^{J^{\ss}}_j+\delta_{j\notin J^{\nss}}}!}}.
\end{equation}
By \eqref{Constants Eq P2} and using \eqref{Constants Eq factorial}, we have
\begin{equation}\label{Constants Eq Lem Prop 2-2}
    P_2(J)=P_2(\chi_J)=(-1)^{\sum_{j+1\notin J^{\nss}}\bra{s^{J^{\ss}}_j+1}}\bbra{\sprod_{j+1\notin J^{\nss}}\bigbra{s^{J^{\ss}}_j}!}\inv.
\end{equation}
By Lemma \ref{Constants Lem ichiJ+sJ} and using $(p-1)!\equiv-1~\mod~p$, we have
\begin{equation}\label{Constants Eq Lem Prop 2-3}
    \sprod_{j=0}^{f-1}\bigbra{(-i(\chi_J)-\un{s}^J)_j}!=(-1)^{f-\abs{J^{\nss}}}\sprod_{j+1\in J^{\nss}}\bigbra{s^{J^{\ss}}_j}!
\end{equation}
Combining \eqref{Constants Eq Lem Prop 2-1}, \eqref{Constants Eq Lem Prop 2-2} and \eqref{Constants Eq Lem Prop 2-3} we deduce that
\begin{equation*}
\begin{aligned}
    \alpha'(J)&=(-1)^{|J\cap(J-1)^{\nss}|+\un{t}^J}\mu_{J^s,J}P_2(J)/\bbra{\sprod_{j=0}^{f-1}\bigbra{(-i(\chi_J)-\un{s}^J)_j}!}\\
    &=(-1)^{d}\bbbra{\frac{\sprod_{j\notin J^{\nss},j+1\notin J^{\nss}}(1)~\sprod_{j\in J^{\nss},j+1\notin J^{\nss}}\bigbra{s^{J^{\ss}}_j+1}}{\sprod_{j\notin J^{\nss},j+1\in J^{\nss}}\bigbra{\!-\!\bra{s^{J^{\ss}}_j}!\bra{s^{J^{\ss}}_j+1}!}~\sprod_{j\in J^{\nss},j+1\in J^{\nss}}\bigbra{\bra{s^{J^{\ss}}_j}!}^2}},
\end{aligned}
\end{equation*}
where (using Lemma \ref{Constants Lem t+t+s})
\begin{equation*}
\begin{aligned}
    d&=\abs{J\cap(J-1)^{\nss}}+\un{t}^J+\un{t}^{J^s}+\un{s}^{J^{\ss}}+\un{1}+f-\abs{J^{\nss}}\\
    &\equiv\abs{J^{\nss}}-\abs{J\cap(J-1)^{\nss}}=\abs{\bra{\partial J}^{\nss}}\quad\mod~2.
\end{aligned}
\end{equation*}
Then a case-by-case examination using \eqref{Constants Eq sJ} shows that $\alpha'(J)=(-1)^{\abs{\bra{\partial J}^{\nss}}}\sprod_{j=0}^{f-1}\alpha'(J)_j$.\qed

\hspace{\fill}

\proof[Proof of Lemma \ref{Constants Lem Prop c'J new}(iii)]

For $J'\neq J^*$, by \eqref{Constants Eq P1} and using \eqref{Constants Eq factorial}, we have 
\begin{equation}\label{Constants Eq Lem Prop 3-1}
\begin{aligned}
    &\sprod_{j=0}^{f-1}\bigbra{p-1-i(\chi_{J'}^s)_j}!=(-1)^{f-\abs{J^{\prime\delta}}}\bbra{\sprod_{j+1\in J^{\prime\delta}}\bigbra{s^{(J'-1)^{\ss}}_j}!};\\
    &P_1(\chi_{J'})=(-1)^{\sum_{j+1\in J^{\prime\delta}}\bigbra{s^{(J'-1)^{\ss}}_j+1}}\bbra{\sprod_{j+1\in J^{\prime\delta}}\bigbra{s^{(J'-1)^{\ss}}_j}!}\inv.
\end{aligned}
\end{equation}
Combining \eqref{Constants Eq Lem Prop 3-1} with Lemma \ref{Constants Lem Jab}, we deduce that
\begin{equation*}
\begin{aligned}
    \alpha(\chi_{J'})&=J\bigbra{i(\chi_{J'}^s),-\un{s}^J}\bbra{(-1)^{f-1}\sprod_{j=0}^{f-1}\bigbra{p-1-i(\chi_{J'}^s)}!}/P_1(\chi_{J'})\\
    &=\bbbra{\frac{\sprod_{j+1\in J^{\prime\delta},j\notin J^{\prime\delta}}(-1)\bigbra{s^{(J'-1)^{\ss}}_j\!+1}}{\sprod_{j+1\notin J^{\prime\delta},j\in J^{\prime\delta}}\bigbra{s^{(J'-1)^{\ss}}_j\!+1}}}\bbra{\sprod_{j+1\in J^{\prime\delta}}\bigbra{s^{(J'-1)^{\ss}}_j}!}^2.
\end{aligned}
\end{equation*}
Then we have (using \eqref{Constants Eq s for deltass})
\begin{equation*}
\begin{aligned}
    \sprod_{i=0}^{\ell(J)-1}\alpha\bigbra{\chi_{\delta_{\ss}^i(J)}}=\bbbra{\frac{\sprod_{i\in I(J)^{5}_j\sqcup I(J)^{6}_j}(-1)\bigbra{s^{\delta_{\ss}^{i+1}(J)}_j+1}}{\sprod_{i\in I(J)^{7}_j\sqcup I(J)^{8}_j}\bigbra{s^{\delta_{\ss}^{i+1}(J)}_j+1}}}\bbra{\sprod_{i\in I(J)^{1}_j\sqcup I(J)^{2}_j}\bigbra{s^{\delta_{\ss}^{i+1}(J)}_j}!}^2\\
    =\bbbra{\frac{\bigbra{p\!-\!1\!-\!r_j\!-\!\delta_{j\in J_{\rhobar}}}^{\abs{I(J)^{5}_j}}\bra{r_j\!+\!1}^{\abs{I(J)^{6}_j}}}{\bigbra{p\!-\!1\!-\!r_j\!-\!\delta_{j\in J_{\rhobar}}}^{\abs{I(J)^{7}_j}}\bra{r_j\!+\!1}^{\abs{I(J)^{8}_j}}}}\!\bigbra{\bra{r_j\!+\!\delta_{j\in J_{\rhobar}}}!}^{2\abs{I(J)^{1}_j}}\bigbra{\bra{p\!-\!2\!-\!r_j}!}^{2\abs{I(J)^{2}_j}}.
\end{aligned}
\end{equation*}
Similar formula holds with each $J$ replaced with $J^{\ss}$. Hence we have
\begin{equation}\label{Constants Eq Lem Prop 3-2}
    \alpha(J)=\bbra{\sprod_{i=0}^{\ell(J)-1}\alpha\bigbra{\chi_{\delta_{\ss}^i(J)}}}/\bbra{\sprod_{i=0}^{\ell(J^{\ss})-1}\alpha\bigbra{\chi_{\delta_{\ss}^i(J^{\ss})}}}=\sprod_{j=0}^{f-1}\alpha(J)_j,
\end{equation}
where (for each $j\in\cJ$)
\begin{equation}\label{Constants Eq Lem Prop 3-3}
    \alpha(J)_j\eqdef\bigbra{\bra{r_j\!+\!\delta_{j\in J_{\rhobar}}}!}^{2i(J)^{1}_j}\bigbra{\bra{p\!-\!2\!-\!r_j}!}^{2i(J)^{2}_j}\bigbra{p\!-\!1\!-\!r_j\!-\!\delta_{j\in J_{\rhobar}}}^{i(J)^{3}_j}\bra{r_j\!+\!1}^{i(J)^{4}_j}\in\FF\x.
\end{equation}
To compute each $\alpha(J)_j$ explicitly, we separate the following cases.

\hspace{\fill}

\noindent\textbf{Case 1.} $j+1\in J_{\rhobar},~j\in J_{\rhobar}$.

Let $k\geq0$ such that $j+i+1\in J_{\rhobar}$ for $0\leq i\leq k$ and $j+k+2\notin J_{\rhobar}$. By \eqref{Constants Eq j in deltass} we have
\begin{equation}\label{Constants Eq Lem Prop 3-4}
\begin{aligned}
    j+1\in\delta_{\ss}^i(J)&\iff\bra{0\leq i\leq k+1~\text{and}~j+i+1\in J}\\
    j\in\delta_{\ss}^i(J)&\iff\bra{0\leq i\leq k+2~\text{and}~j+i\in J}\\
    j+1\in\delta_{\ss}^i(J^{\ss})&\iff\bra{0\leq i\leq k~\text{and}~j+i+1\in J}\\
    j\in\delta_{\ss}^i(J^{\ss})&\iff\bra{0\leq i\leq k+1~\text{and}~j+i\in J}.
\end{aligned}
\end{equation}
In particular, for $t\in\set{1,2,5,6,7,8}$ we have $i\in I(J)^{t}_j\iff i\in I(J^{\ss})^{t}_j$ for $0\leq i\leq k-1$, hence
\begin{equation}\label{Constants Eq Lem Prop 3-5}
    \bigabs{I(J)^t_j}-\bigabs{I(J^{\ss})^t_j}=\bigabs{I^{\geq}(J)^t_j}-\bigabs{I^{\geq}(J^{\ss})^t_j}.
\end{equation}
We denote $\op{ch}^2_J\eqdef\bigbra{\delta_{j+k\in J},\delta_{j+k+1\in J},\delta_{j+k+2\in J}}\in\set{0,1}^3$. Combining \eqref{Constants Eq Lem Prop 3-3}, \eqref{Constants Eq Lem Prop 3-4}, \eqref{Constants Eq Lem Prop 3-5} and a case-by-case examination we get the following table.
\begin{figure}[H]
    \centering
\begin{tabular}{|c|c|c|c|c|}
    \hline
    $\op{ch}^2_J$&$I^{\geq}(J)^{1,2,5,6,7,8}_j$&$I^{\geq}(J^{\ss})^{1,2,5,6,7,8}_j$&$i(J)^{1,2,3,4}_j$&$\alpha(J)_j$\\
    \hline
    $(1,1,1)$&$\set{1},\emptyset,\set{1},\emptyset,\emptyset,\set{2}$&$\set{0},\emptyset,\set{0},\emptyset,\emptyset,\set{1}$&$0,0,0,0$&$1$\\
    \hline
    $(0,1,1)$&$\set{1},\emptyset,\set{1},\emptyset,\set{0},\set{2}$&$\set{0},\emptyset,\emptyset,\emptyset,\emptyset,\set{1}$&$0,0,0,0$&$1$\\
    \hline
    $(1,1,0)$&$\set{0},\emptyset,\set{0},\emptyset,\emptyset,\set{1}$&$\set{0},\emptyset,\set{0},\emptyset,\emptyset,\set{1}$&$0,0,0,0$&$1$\\
    \hline
    $(0,1,0)$&$\set{0},\emptyset,\emptyset,\emptyset,\emptyset,\set{1}$&$\set{0},\emptyset,\emptyset,\emptyset,\emptyset,\set{1}$&$0,0,0,0$&$1$\\
    \hline
    $(1,0,1)$&$\set{1},\set{0},\emptyset,\emptyset,\emptyset,\set{2}$&$\emptyset,\emptyset,\emptyset,\emptyset,\emptyset,\set{0}$&$1,1,0,0$&$1$\\
    \hline
    $(0,0,1)$&$\set{1},\set{0},\emptyset,\set{0},\emptyset,\set{2}$&$\emptyset,\emptyset,\emptyset,\emptyset,\emptyset,\emptyset$&$1,1,0,0$&$1$\\
    \hline
    $(1,0,0)$&$\emptyset,\emptyset,\emptyset,\emptyset,\emptyset,\set{0}$&$\emptyset,\emptyset,\emptyset,\emptyset,\emptyset,\set{0}$&$0,0,0,0$&$1$\\
    \hline
    $(0,0,0)$&$\emptyset,\emptyset,\emptyset,\emptyset,\emptyset,\emptyset$&$\emptyset,\emptyset,\emptyset,\emptyset,\emptyset,\emptyset$&$0,0,0,0$&$1$\\
    \hline
\end{tabular}
\end{figure}
\noindent Here we use $\bigbra{(r_j+1)!(p-2-r_j)!}^2=1$ in $\FF$. In particular, we have $\alpha(J)_j\alpha'(J)_j=1$.

\hspace{\fill}

\noindent\textbf{Case 2.} $j+1\in J_{\rhobar},~j\notin J_{\rhobar}$.

Let $k\geq0$ such that $j+i+1\in J_{\rhobar}$ for $0\leq i\leq k$ and $j+k+2\notin J_{\rhobar}$. By \eqref{Constants Eq j in deltass} we have
\begin{equation}\label{Constants Eq Lem Prop 3-6}
\begin{aligned}
    j+1\in\delta_{\ss}^i(J)&\iff\bra{0\leq i\leq k+1~\text{and}~j+i+1\in J}\\
    j\in\delta_{\ss}^i(J)&\iff\bra{i=0~\text{and}~j\in J}\\
    j+1\in\delta_{\ss}^i(J^{\ss})&\iff\bra{0\leq i\leq k~\text{and}~j+i+1\in J}\\
    j\in\delta_{\ss}^i(J^{\ss})&\iff\bra{\text{impossible}}.
\end{aligned}
\end{equation}
We denote $\op{ch}^3_J\eqdef\bigbra{\delta_{j\in J},\delta_{j+1\in J},\delta_{j+2\in J}}\in\set{0,1}^3$.

If $k=0$, then combining \eqref{Constants Eq Lem Prop 3-3} and \eqref{Constants Eq Lem Prop 3-6} we get the following table.
\begin{figure}[H]
    \centering
\begin{tabular}{|c|c|c|c|c|}
    \hline
    $\op{ch}^3_J$&$I(J)^{1,2,5,6,7,8}_j$&$I(J^{\ss})^{1,2,5,6,7,8}_j$&$i(J)^{1,2,3,4}_j$&$\alpha(J)_j$\\
    \hline
    $(1,1,1)$&$\set{1},\emptyset,\set{1},\emptyset,\set{0},\emptyset$&$\set{0},\emptyset,\set{0},\emptyset,\emptyset,\emptyset$&$0,0,-1,0$&$(p-1-r_j)\inv$\\
    \hline
    $(0,1,1)$&$\set{1},\emptyset,\set{1},\emptyset,\emptyset,\emptyset$&$\set{0},\emptyset,\set{0},\emptyset,\emptyset,\emptyset$&$0,0,0,0$&$1$\\
    \hline
    $(1,1,0)$&$\set{0},\emptyset,\emptyset,\emptyset,\emptyset,\emptyset$&$\set{0},\emptyset,\set{0},\emptyset,\emptyset,\emptyset$&$0,0,-1,0$&$(p-1-r_j)\inv$\\
    \hline
    $(0,1,0)$&$\set{0},\emptyset,\set{0},\emptyset,\emptyset,\emptyset$&$\set{0},\emptyset,\set{0},\emptyset,\emptyset,\emptyset$&$0,0,0,0$&$1$\\
    \hline
    $(1,0,1)$&$\set{1},\set{0},\set{1},\emptyset,\emptyset,\emptyset$&$\emptyset,\emptyset,\emptyset,\emptyset,\emptyset,\emptyset$&$1,1,1,0$&$(p-1-r_j)\inv$\\
    \hline
    $(0,0,1)$&$\set{1},\set{0},\set{1},\set{0},\emptyset,\emptyset$&$\emptyset,\emptyset,\emptyset,\emptyset,\emptyset,\emptyset$&$1,1,1,1$&$-1$\\
    \hline
    $(1,0,0)$&$\emptyset,\emptyset,\emptyset,\emptyset,\emptyset,\set{0}$&$\emptyset,\emptyset,\emptyset,\emptyset,\emptyset,\emptyset$&$0,0,0,-1$&$(r_j+1)\inv$\\
    \hline
    $(0,0,0)$&$\emptyset,\emptyset,\emptyset,\emptyset,\emptyset,\emptyset$&$\emptyset,\emptyset,\emptyset,\emptyset,\emptyset,\emptyset$&$0,0,0,0$&$1$\\
    \hline
\end{tabular}
\end{figure}
\noindent Here we use $\bigbra{r_j!(p-2-r_j)!}^2=(r_j+1)^{-2}$ in $\FF$.

If $k\geq1$, then for $t\in\set{1,2,5,6,7,8}$ we have $i\in I(J)^{t}_j\iff i\in I(J^{\ss})^{t}_j$ for $1\leq i\leq k-1$, hence
\begin{equation}\label{Constants Eq Lem Prop 3-7}
    \bigabs{I(J)^t_j}-\bigabs{I(J^{\ss})^t_j}=\Bigbra{\bigabs{I\cc(J)^t_j}-\bigabs{I\cc(J^{\ss})^t_j}}+\Bigbra{\bigabs{I^{\geq}(J)^t_j}-\bigabs{I^{\geq}(J^{\ss})^t_j}}.
\end{equation}
By \eqref{Constants Eq Lem Prop 3-6} and a case-by-case examination, we have
\begin{equation}\label{Constants Eq Lem Prop 3-8}
\begin{aligned}
    I^{\geq}(J)^{1,2,5,6,7,8}_j&=
\begin{cases}
    \set{1},\set{0},\set{1},\set{0},\emptyset,\emptyset&\text{if}~j+k+1\notin J,~j+k+2\in J\\
    \emptyset,\emptyset,\emptyset,\emptyset,\emptyset,\emptyset&\text{otherwise}.
\end{cases}\\
    I^{\geq}(J^{\ss})^{1,2,5,6,7,8}_j&=\emptyset,\emptyset,\emptyset,\emptyset,\emptyset,\emptyset.
\end{aligned}
\end{equation}
If $\bigbra{\delta_{j+k+1\in J},\delta_{j+k+2\in J}}\neq(0,1)$, then combining \eqref{Constants Eq Lem Prop 3-3}, \eqref{Constants Eq Lem Prop 3-7} and \eqref{Constants Eq Lem Prop 3-8} we get the following table.
\begin{figure}[H]
    \centering
\begin{tabular}{|c|c|c|c|c|}
    \hline
    $\op{ch}^3_J$&$I\cc(J)^{1,2,5,6,7,8}_j$&$I\cc(J^{\ss})^{1,2,5,6,7,8}_j$&$i(J)^{1,2,3,4}_j$&$\alpha(J)_j$\\
    \hline
    $(1,1,1)$&$\emptyset,\emptyset,\emptyset,\emptyset,\set{0},\emptyset$&$\emptyset,\emptyset,\emptyset,\emptyset,\emptyset,\emptyset$&$0,0,-1,0$&$(p-1-r_j)\inv$\\
    \hline
    $(0,1,1)$&$\emptyset,\emptyset,\emptyset,\emptyset,\emptyset,\emptyset$&$\emptyset,\emptyset,\emptyset,\emptyset,\emptyset,\emptyset$&$0,0,0,0$&$1$\\
    \hline
    $(1,1,0)$&$\set{0},\emptyset,\emptyset,\emptyset,\emptyset,\emptyset$&$\set{0},\emptyset,\set{0},\emptyset,\emptyset,\emptyset$&$0,0,-1,0$&$(p-1-r_j)\inv$\\
    \hline
    $(0,1,0)$&$\set{0},\emptyset,\set{0},\emptyset,\emptyset,\emptyset$&$\set{0},\emptyset,\set{0},\emptyset,\emptyset,\emptyset$&$0,0,0,0$&$1$\\
    \hline
    $(1,0,1)$&$\emptyset,\set{0},\emptyset,\emptyset,\emptyset,\emptyset$&$\emptyset,\set{0},\emptyset,\set{0},\emptyset,\emptyset$&$0,0,0,-1$&$(r_j+1)\inv$\\
    \hline
    $(0,0,1)$&$\emptyset,\set{0},\emptyset,\set{0},\emptyset,\emptyset$&$\emptyset,\set{0},\emptyset,\set{0},\emptyset,\emptyset$&$0,0,0,0$&$1$\\
    \hline
    $(1,0,0)$&$\emptyset,\emptyset,\emptyset,\emptyset,\emptyset,\set{0}$&$\emptyset,\emptyset,\emptyset,\emptyset,\emptyset,\emptyset$&$0,0,0,-1$&$(r_j+1)\inv$\\
    \hline
    $(0,0,0)$&$\emptyset,\emptyset,\emptyset,\emptyset,\emptyset,\emptyset$&$\emptyset,\emptyset,\emptyset,\emptyset,\emptyset,\emptyset$&$0,0,0,0$&$1$\\
    \hline
\end{tabular}
\end{figure}
\noindent If $\bigbra{\delta_{j+k+1\in J},\delta_{j+k+2\in J}}=(0,1)$, then the result of $\alpha(J)_j$ above should be multiplied by $(-1)$, which comes from \eqref{Constants Eq Lem Prop 3-8} using the equality $\bigbra{r_j!(p-2-r_j)!}^2(r_j+1)(p-1-r_j)=-1$ in $\FF$.

To conclude, in both cases (either $k=0$ or $k\geq1$) we have $\alpha(J)_j\alpha'(J)_j=-1$ if $j+k+1\notin J,~j+k+2\in J$, and $\alpha(J)_j\alpha'(J)_j=1$ otherwise.

\hspace{\fill}

\noindent\textbf{Case 3.} $j+1\notin J_{\rhobar},~j\notin J_{\rhobar}$.

In this case, by \eqref{Constants Eq j in deltass} we have $j,j+1\notin\delta_{\ss}^i(J)$ for all $i\geq1$ and $j,j+1\notin\delta_{\ss}^i(J^{\ss})$ for all $i\geq0$. We denote $\op{ch}^4_J\eqdef\bigbra{\delta_{j\in J},\delta_{j+1\in J}}\in\set{0,1}^2$. Then by \eqref{Constants Eq Lem Prop 3-3} we get the following table.

\begin{figure}[H]
    \centering
\begin{tabular}{|c|c|c|c|c|}
    \hline
    $\op{ch}^4_J$&$I(J)^{1,2,5,6,7,8}_j$&$I(J^{\ss})^{1,2,5,6,7,8}_j$&$i(J)^{1,2,3,4}_j$&$\alpha(J)_j$\\
    \hline
    $(1,1)$&$\set{0},\emptyset,\emptyset,\emptyset,\emptyset,\emptyset$&$\emptyset,\emptyset,\emptyset,\emptyset,\emptyset,\emptyset$&$1,0,0,0$&$(r_j!)^2$\\
    \hline
    $(1,0)$&$\emptyset,\emptyset,\emptyset,\emptyset,\emptyset,\set{0}$&$\emptyset,\emptyset,\emptyset,\emptyset,\emptyset,\emptyset$&$0,0,0,-1$&$(r_j+1)\inv$\\
    \hline
    $(0,1)$&$\set{0},\emptyset,\set{0},\emptyset,\emptyset,\emptyset$&$\emptyset,\emptyset,\emptyset,\emptyset,\emptyset,\emptyset$&$1,0,1,0$&$(r_j!)^2(p-1-r_j)$\\
    \hline
    $(0,0)$&$\emptyset,\emptyset,\emptyset,\emptyset,\emptyset,\emptyset$&$\emptyset,\emptyset,\emptyset,\emptyset,\emptyset,\emptyset$&$0,0,0,0$&$1$\\
    \hline
\end{tabular}
\end{figure}
\noindent In particular, in this case we have $\alpha(J)_j\alpha'(J)_j=1$.

\hspace{\fill}

\noindent\textbf{Case 4.} $j+1\notin J_{\rhobar},~j\in J_{\rhobar}$.

In this case, by \eqref{Constants Eq j in deltass} we have 
\begin{equation}\label{Constants Eq Lem Prop 3-9}
\begin{aligned}
    j+1\in\delta_{\ss}^i(J)&\iff\bra{i=0~\text{and}~j+1\in J}\\
    j\in\delta_{\ss}^i(J)&\iff\bra{i\in\set{0,1}~\text{and}~j+i\in J}\\
    j+1\in\delta_{\ss}^i(J^{\ss})&\iff\bra{\text{impossible}}\\
    j\in\delta_{\ss}^i(J^{\ss})&\iff\bra{i=0~\text{and}~j\in J}.
\end{aligned}
\end{equation}
Combining \eqref{Constants Eq Lem Prop 3-3} and \eqref{Constants Eq Lem Prop 3-9} we get the following table.
\begin{figure}[H]
    \centering
\begin{tabular}{|c|c|c|c|c|}
    \hline
    $\op{ch}^4_J$&$I(J)^{1,2,5,6,7,8}_j$&$I(J^{\ss})^{1,2,5,6,7,8}_j$&$i(J)^{1,2,3,4}_j$&$\alpha(J)_j$\\
    \hline
    $(1,1)$&$\set{0},\emptyset,\set{0},\emptyset,\emptyset,\set{1}$&$\emptyset,\emptyset,\emptyset,\emptyset,\emptyset,\set{0}$&$1,0,1,0$&$\bra{(r_j+1)!}^2(p-2-r_j)$\\
    \hline
    $(1,0)$&$\emptyset,\emptyset,\emptyset,\emptyset,\emptyset,\set{0}$&$\emptyset,\emptyset,\emptyset,\emptyset,\emptyset,\set{0}$&$0,0,0,0$&$1$\\
    \hline
    $(0,1)$&$\set{0},\emptyset,\emptyset,\emptyset,\emptyset,\set{1}$&$\emptyset,\emptyset,\emptyset,\emptyset,\emptyset,\emptyset$&$1,0,0,-1$&$\bra{(r_j+1)!}^2(r_j+1)\inv$\\
    \hline
    $(0,0)$&$\emptyset,\emptyset,\emptyset,\emptyset,\emptyset,\emptyset$&$\emptyset,\emptyset,\emptyset,\emptyset,\emptyset,\emptyset$&$0,0,0,0$&$1$\\
    \hline
\end{tabular}
\end{figure}
\noindent In particular, in this case we have $\alpha(J)_j\alpha'(J)_j=-1$ if $j\notin J,~j+1\in J$, and $\alpha(J)_j\alpha'(J)_j=1$ otherwise.

\hspace{\fill}

Combining \eqref{Constants Eq Lem Prop 3-2} and the explicit computation of $\alpha(J)_j$ in Case 1-Case 4, we deduce that 
\begin{equation*}
    \alpha(J){\sprod_{j=0}^{f-1}\alpha'(J)_j}=\sprod_{j=0}^{f-1}\bigbra{\alpha(J)_j\alpha'(J)_j}=(-1)^{2\#\set{j:j+1\notin J_{\rhobar},~j+1\in J,~j\in J_{\rhobar},~j\notin J}}=1,
\end{equation*}
which completes the proof.\qed

\section{The main result}\label{Constants Sec main result}

We combine the results of the previous sections and the results of \cite{Wang2} and \cite{Wang3} to finish the proof of Theorem \ref{Constants Thm main}.

We recall the definition of the ring $A$. We let $\fm_{N_0}$ be the maximal ideal of $\FF\ddbra{N_0}$. Then we have $\FF\ddbra{N_0}=\FF\ddbra{Y_0,\ldots,Y_{f-1}}$ and $\fm_{N_0}=(Y_0,\ldots,Y_{f-1})$. Consider the multiplicative subset $S\eqdef\sset{(Y_0\cdots Y_{f-1})^n:n\geq0}$ of $\FF\ddbra{N_0}$. Then $A\eqdef\wh{\FF\ddbra{N_0}_S}$ is the completion of the localization $\FF\ddbra{N_0}_S$ with respect to the $\fm_{N_0}$-adic filtration 
\begin{equation*}
    F_n\bbra{\FF\ddbra{N_0}_S}=\bigcup\limits_{k\geq0}\frac{1}{(Y_0\cdots Y_{f-1})^k}\fm_{N_0}^{kf-n},
\end{equation*}
where $\fm_{N_0}^m\eqdef\FF\ddbra{N_0}$ if $m\leq0$. We denote by $F_nA$ ($n\in\ZZ$) the induced filtration on $A$ and endow $A$ with the associated topology. 
There is an $\FF$-linear action of $\OK\x$ on $\FF\ddbra{N_0}$ given by multiplication on $N_0\cong\OK$, and an $\FF$-linear Frobenius $\varphi$ on $\FF\ddbra{N_0}$ given by multiplication by $p$ on $N_0\cong\OK$. They extend canonically by continuity to commuting continuous $\FF$-linear actions of $\varphi$ and $\OK\x$ on $A$. Then an \'etale $(\varphi,\OK\x)$-module over $A$ is by definition a finite free $A$-module $D$ endowed with a semi-linear Frobenius $\varphi$ and a commuting continuous semi-linear action of $\OK\x$ such that the image of $\varphi$ generates $D$ over $A$.

\hspace{\fill}

Let $\rhobar$ be as in \eqref{Constants Eq rhobar}. We refer to \cite{BHHMS3} for the definition of the \'etale $(\varphi,\OK\x)$-module $D_A^{\otimes}(\rhobar)$ over $A$. By \cite[(46)]{Wang3}, $D_A^{\otimes}(\rhobar)$ has rank $2^f$ and is equipped with an $A$-basis such that
\begin{enumerate}
    \item 
    the corresponding matrix $\Mat(\varphi)\in\GL_{2^f}(A)$ (with its rows and colomns indexed by the subsets of $\cJ$) for the $\varphi$-action is given by 
    \begin{equation}\label{Constants Eq Mat phi}
        \Mat(\varphi)_{J',J+1}=
    \begin{cases}
        \nu_{J+1,J'}\sprod_{j\notin J}\bra{Y_j/\varphi(Y_j)}^{r_j+1}&\text{if}~J'\subseteq J\\
        0&\text{if}~J'\nsubseteq J
    \end{cases}
    \end{equation}
    with (see \eqref{Constants Eq LT dj} for $\beta$ and $d_j$)
    \begin{equation*}
        \nu_{J,J'}\eqdef\beta^{|J^c|-|J|}\sprod_{j\in(J-1)\setminus J'}d_j~\text{for}~J'\subseteq J-1;
    \end{equation*}
    \item
    the corresponding matrices $\Mat(a)$ for the $\OK\x$-action satisfy $\Mat(a)\in\op{I}_{2^f}+\M_{2^f}(F_{1-p}A)$ for all $a\in\OK\x$.
\end{enumerate}
In particular, since $d_j=0$ if and only if $j\in J_{\rhobar}$, we deduce that $\nu_{J,J'}\neq0$ if and only if $(J-1)^{\ss}\subseteq J'\subseteq J-1$. We then extend the definition of $\nu_{J,J'}$ to all $J,J'\subseteq\cJ$ such that $(J-1)^{\ss}=(J')^{\ss}$ by the formula
\begin{equation}\label{Constants Eq nuJJ'}
    \nu_{J,J'}\eqdef\beta^{|J^c|-|J|}\bbra{\sprod_{j\in(J-1)^{\nss}}d_j}/\bbra{\sprod_{j\in(J')^{\nss}}d_j}.
\end{equation}
Then for $J_1,J_2,J_3,J_4\subseteq\cJ$ such that $(J_1-1)^{\ss}=(J_2-1)^{\ss}=J_3^{\ss}=J_4^{\ss}$ we have $\nu_{J_1,J_3}/\nu_{J_1,J_4}=\nu_{J_2,J_3}/\nu_{J_2,J_4}$. We define $\nu_{*,J}/\nu_{*,J'}$ for $J^{\ss}=(J')^{\ss}$ in a similar way as $\mu_{*,J}/\mu_{*,J'}$.

If moreover $J_{\rhobar}\neq\cJ$, then for $J\subseteq\cJ$ we define
\begin{equation}\label{Constants Eq nuJ}
    \nu(J)\eqdef\frac{\nu_{*,J}}{\nu_{*,J^{\ss}}}\bbbra{\frac{\sprod_{i=0}^{\ell(J)-1}\nu_{\delta_{\ss}^i(J),\delta_{\ss}^{i+1}(J)}}{\sprod_{i=0}^{\ell(J^{\ss})-1}\nu_{\delta_{\ss}^i(J^{\ss}),\delta_{\ss}^{i+1}(J^{\ss})}}}.
\end{equation}
By definition, we have $\nu(J)=\beta^{B(J)}d(J)$ for $B(J)\in\ZZ$ and $d(J)\in\FF\x$ as in \eqref{Constants Eq ABd}.

\hspace{\fill}

Let $\pi$ be as in \eqref{Constants Eq local factor}. We refer to \cite{BHHMS2} for the definition of the \'etale $(\varphi,\OK\x)$-module $D_A(\pi)$ over $A$. By \cite[Prop.~C.3(i),(iii)]{Wang2} and \cite[Cor.~C.4]{Wang2}, the twisted dual \'etale $(\varphi,\OK\x)$-module $\Hom_A(D_A(\pi),A)(1)$ has rank $2^f$ and is equipped with an $A$-basis such that
\begin{enumerate}
    \item 
    the corresponding matrix $\Mat(\varphi)'\in\GL_{2^f}(A)$ for the $\varphi$-action is given by 
    \begin{equation}\label{Constants Eq Mat phi'}
        \Mat(\varphi)'_{J',J+1}=
        \begin{cases}
            \gamma_{J+1,J'}\sprod_{j\notin J}\bra{Y_j/\varphi(Y_j)}^{r_j+1}&\text{if}~J^{\ss}\subseteq J'\subseteq J\\
            0&\text{otherwise},
        \end{cases}
    \end{equation}
    where for $J,J'\subseteq\cJ$ such that $(J-1)^{\ss}=(J')^{\ss}$ we define
    \begin{equation}\label{Constants Eq gammaJJ'}
        \gamma_{J,J'}\eqdef(-1)^{f-1+\delta_{(J')^{\nss}=\cJ}+\abs{\bra{J'\cap(J'-1)}^{\nss}}}\mu_{J,J'};
    \end{equation}
    \item 
    the corresponding matrices $\Mat(a)'$ for the $\OK\x$-action satisfy $\Mat(a)'_{J,J}\in1+F_{1-p}A$ for all $a\in\OK\x$ and $J\subseteq\cJ$, which uniquely determines $\Mat(a)'$.
\end{enumerate}
Note that when $J_{\rhobar}\neq\cJ$, $J\nsubseteq J_{\rhobar}$ and $J\neq J^*$ (see Lemma \ref{Constants Lem J*} for $J^*$) we have (see \eqref{Constants Eq gammaJ} for $\gamma(J)$)
\begin{equation}\label{Constants Eq gammaJ new}
    \gamma(J)=\frac{\gamma_{*,J}}{\gamma_{*,J^{\ss}}}\bbbra{\frac{\sprod_{i=0}^{\ell(J)-1}\gamma_{\delta_{\ss}^i(J),\delta_{\ss}^{i+1}(J)}}{\sprod_{i=0}^{\ell(J^{\ss})-1}\gamma_{\delta_{\ss}^i(J^{\ss}),\delta_{\ss}^{i+1}(J^{\ss})}}},
\end{equation}
where $\gamma_{*,J}/\gamma_{*,J^{\ss}}$ is defined in a similar way as $\mu_{*,J}/\mu_{*,J^{\ss}}$. 

\begin{lemma}\label{Constants Lem conjugation}
    Suppose that $J_{\rhobar}\neq\cJ$. Let $B\in\M_{2^f}(\FF)$ with its rows and columns indexed by the subsets of $\cJ$ such that
    \begin{enumerate}
        \item 
        $B_{J,J'}\in\FF\x$ if and only if $(J-1)^{\ss}=(J')^{\ss}$;
        \item 
        $B_{J_1,J_3}/B_{J_1,J_4}=B_{J_2,J_3}/B_{J_2,J_4}$ for all $J_1,J_2,J_3,J_4\subseteq\cJ$ such that $(J_1-1)^{\ss}=(J_2-1)^{\ss}=J_3^{\ss}=J_4^{\ss}$.
    \end{enumerate}
    We define  $B_{*,J}/B_{*,J^{\ss}}$ in a similar way as $\mu_{*,J}/\mu_{*,J^{\ss}}$. Then up to conjugation by diagonal matrices, $B$ is uniquely determined by the quantities 
    \begin{equation*}
        \left\{
    \begin{aligned}
        &B_{\emptyset,\emptyset}\\
        &B(J^*)\eqdef B_{(J^*-1)^{\ss},J^*}B_{J^*,(J^*-1)^{\ss}}\\
        &B(J)\eqdef\frac{B_{*,J}}{B_{*,J^{\ss}}}\bbbra{\frac{\sprod_{i=0}^{\ell(J)-1}B_{\delta_{\ss}^i(J),\delta_{\ss}^{i+1}(J)}}{\sprod_{i=0}^{\ell(J^{\ss})-1}B_{\delta_{\ss}^i(J^{\ss}),\delta_{\ss}^{i+1}(J^{\ss})}}}~\text{for}~J\nsubseteq J_{\rhobar}~\text{and}~J\neq J^*.
    \end{aligned}\right.
    \end{equation*}
\end{lemma}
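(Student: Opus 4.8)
The plan is to reconstruct $B$ (up to diagonal conjugation) by a connectivity argument on the directed graph whose vertices are the subsets of $\cJ$ and whose edges $J\to J'$ are the pairs with $(J-1)^{\ss}=(J')^{\ss}$; by hypothesis (i) the nonzero entries of $B$ sit exactly on these edges, and hypothesis (ii) says the ``ratio'' $B_{*,J}/B_{*,J'}$ along two edges with the same head and the same tail-class is independent of the tail. The key point is that the operation $J\mapsto (J-1)^{\ss}$ is a contraction: iterating it (when $J_{\rhobar}\neq\cJ$) eventually reaches $\emptyset$, so every vertex $J$ has a canonical forward path $J\to\delta_{\ss}(J)\to\cdots\to\emptyset$ of length $\ell(J)$. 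The data $B(\emptyset,\emptyset)$ records the unique loop at $\emptyset$, the $B(J)$ for $J\nsubseteq J_{\rhobar}$, $J\neq J^*$ record certain compositions of edge-ratios along these forward paths, and $B(J^*)$ records the unique ``exceptional $2$-cycle'' $(J^*-1)^{\ss}\leftrightarrow J^*$ coming from Lemma~\ref{Constants Lem J*}.

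First I would reduce the statement to a normalization problem: two matrices $B,B'$ satisfying (i),(ii) are conjugate by a diagonal matrix $D=\Diag(d_J)_{J\subseteq\cJ}$ if and only if $B'_{J,J'}=d_J d_{J'}\inv B_{J,J'}$ for every edge $J\to J'$. So I must show that the listed quantities determine the entries $B_{J,J'}$ modulo the coboundary relation $B_{J,J'}\sim (d_J/d_{J'})B_{J,J'}$; equivalently, that the listed quantities generate the same invariants as $B$ does, namely the ``cocycle'' obtained by multiplying edge-values around any cycle in the graph. Thus the heart of the argument is: (a) describe a spanning set of cycles in this graph; (b) check that the product of $B$ along each such cycle is a monomial in $B_{\emptyset,\emptyset}$, the $B(J)$'s, and $B(J^*)$; and (c) conversely, that one can choose the diagonal gauge $d_J$ freely enough to set all edge-values to any prescribed values compatible with these cycle-invariants.

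For (a), the crucial structural fact is that within a fixed fiber $F_S \eqdef \{J : (J-1)^{\ss}=S\}$ and a fixed target fiber, condition (ii) forces all the edge-ratios to agree, so the combinatorics collapses: effectively the graph retracts onto the forward paths together with the loop at $\emptyset$ and the single $2$-cycle through $J^*$. Concretely, I would argue that any cycle can be written, using (ii), as a composition of (1) the loop at $\emptyset$, (2) ``go out along the forward path of $J$, come back along the forward path of $J^{\ss}$'' loops — whose $B$-value is exactly $B(J)$ by definition (telescoping the $B_{*,J}/B_{*,J^{\ss}}$ factor against the product of edge-values $B_{\delta_{\ss}^i(J),\delta_{\ss}^{i+1}(J)}$) — and (3) the exceptional $2$-cycle at $J^*$, whose value is $B(J^*)$. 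Here I would use that $\delta_{\ss}^{i}(J)$ and $\delta_{\ss}^{i}(J^{\ss})$ coincide for $i$ large (both reach $\emptyset$), which is what makes the two forward paths shareable and the loop (2) well-defined; this is precisely the mechanism already exploited in Example~\ref{Constants Ex define gammaJ} and Proposition~\ref{Constants Prop ratio Js} for the specific matrix $\mu$.

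The main obstacle I anticipate is bookkeeping the gauge freedom precisely in step (c): one must verify that the number of independent diagonal parameters $d_J$ (that is, $2^f$, minus the one overall scalar that acts trivially on edge-values, i.e. $2^f-1$ if the graph is connected) exactly matches the number of edges minus the number of independent cycle-invariants, so that after fixing the cycle-invariants the remaining edge-values can all be normalized away. Since the graph here is a union of trees hanging off a small ``core'' (the loop at $\emptyset$ and the $2$-cycle at $J^*$), its first Betti number is small and the cycle space is spanned exactly by the loops enumerated in (2)–(3) above plus the loop at $\emptyset$; I would make this count explicit by induction on $\ell(J)$, peeling off leaves $J$ with $\ell(J)$ maximal, using the corresponding $d_J$ to normalize the unique edge $J\to\delta_{\ss}(J)$, and checking the remaining constraint is exactly $B(J)$ (or, for the two core vertices, $B_{\emptyset,\emptyset}$ and $B(J^*)$). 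The only genuinely delicate case is when $J^{\nss}=\cJ$, i.e. $J^{\ss}=\emptyset$ but $J\neq\emptyset$, and when $J=J^*$, where the ``come back along the path of $J^{\ss}$'' degenerates; here one falls back on the loop at $\emptyset$ and the $2$-cycle at $J^*$ respectively, which is why these two quantities are listed separately. I would close by remarking that applying this lemma to $B=\mu$ (via \eqref{Constants Eq muJJ'}, Corollary~\ref{Constants Cor muJ*}, Lemma~\ref{Constants Lem muempty}), to $B=\gamma$ (via \eqref{Constants Eq gammaJJ'}, \eqref{Constants Eq gammaJ new}), and to $B=\nu$ (via \eqref{Constants Eq nuJJ'}, \eqref{Constants Eq nuJ}) is exactly what feeds into the comparison of $\Mat(\varphi)'$ in \eqref{Constants Eq Mat phi'} with $\Mat(\varphi)$ in \eqref{Constants Eq Mat phi} in the proof of Theorem~\ref{Constants Thm main}.
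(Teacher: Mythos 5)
Your core mechanism is the right one and is, at heart, the same gauge-fixing argument the paper uses, but the paper's packaging is much more direct. After the (easy) check that diagonal conjugation does not change the listed quantities, the paper conjugates by the single explicit diagonal matrix $Q$ with $Q_{J,J}=\prod_{i=0}^{\ell(J)-1}B_{\delta_{\ss}^i(J),\delta_{\ss}^{i+1}(J)}$ — this is your ``normalize the forward tree'' step done in one stroke instead of by leaf-peeling induction — so that $B_{J,\delta_{\ss}(J)}=1$ for all $J\neq\emptyset$; the entries with $J'\subseteq J_{\rhobar}$ are then $1$ or $B_{\emptyset,\emptyset}$, and every remaining entry is pinned down in one line by hypothesis (ii): $B_{J,J'}=B(J')\,B_{J,(J')^{\ss}}$ when $J'\nsubseteq J_{\rhobar}$, $J'\neq J^*$, and $B_{J,J^*}=B(J^*)\,B_{J,(J-1)^{\ss}}/B_{(J^*-1)^{\ss},(J-1)^{\ss}}$. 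No cycle-space or Betti-number bookkeeping is needed.

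The one place where your plan, taken literally, would fail is step (c). On the actual support graph the count does not come out right: for each class $S\subseteq J_{\rhobar}$ the nonzero block is a full $2^{f-|J_{\rhobar}|}\times 2^{f-|J_{\rhobar}|}$ block, so there are $2^{2f-|J_{\rhobar}|}$ edges and the first Betti number is $2^{2f-|J_{\rhobar}|}-2^f+1$, which is far larger than the list of invariants — the graph is not ``a union of trees hanging off a small core''. The statement only becomes true after you use (ii) to collapse each block to a rank-one object (a row vector times a column vector, i.e. $2\cdot 2^{f-|J_{\rhobar}|}-1$ parameters per block), and then indeed the reduced parameter count minus the $2^f-1$ effective gauge parameters equals $2^f-2^{|J_{\rhobar}|}+1$, matching the number of listed quantities. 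That reduction is exactly the content you defer to ``I would argue'', and it is exactly what the paper's one-line formulas above accomplish; so your route is salvageable, but only by first performing the same reduction the paper does, after which the cohomological superstructure is no longer doing any work. (Minor point: your ``delicate case $J^{\nss}=\cJ$, i.e. $J^{\ss}=\emptyset$ but $J\neq\emptyset$'' conflates two different conditions, and in fact $J^{\ss}=\emptyset$ causes no degeneration in $B(J)$ — the denominator product is empty and $B_{*,J}/B_{*,\emptyset}$ still makes sense; only $J=J^*$ genuinely requires the separate invariant $B(J^*)$.)
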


\begin{proof}
    First, it is easy to check that conjugation by a diagonal matrix does not change these quantities. 
    
    Next, given such a matrix $B$, after conjugation we may assume that $B_{J,\delta_{\ss}(J)}=1$ for all $J\neq\emptyset$. Indeed, if we let $Q\in\GL_{2^f}(\FF)$ be the diagonal matrix with $Q_{J,J}=\prod_{i=0}^{\ell(J)-1}B_{\delta_{\ss}^i(J),\delta_{\ss}^{i+1}(J)}$, then $Q\inv BQ$ satisfies this property. In particular, this determines the entries $B_{J,J'}$ with $J'\subseteq J_{\rhobar}$.
    
    Then for $J,J'$ such that $(J-1)^{\ss}=(J')^{\ss}$ and $J'\nsubseteq J_{\rhobar}$, the entry $B_{J,J'}$ is determined by 
    \begin{equation*}
        B_{J,J'}=
    \begin{cases}
        B(J')B_{J,(J')^{\ss}}&\text{if}~J'\neq J^*\\
        B(J^*)B_{J,(J-1)^{\ss}}/B_{(J^*-1)^{\ss},(J-1)^{\ss}}&\text{if}~J'=J^*.
    \end{cases}
    \end{equation*}
    This completes the proof.
\end{proof}

Suppose that the matrices $\bra{\gamma_{J,J'}},\bra{\nu_{J,J'}}\in\M_{2^f}(\FF)$ are conjugated by the diagonal matrix $Q$, then the matrices $\bigbra{\gamma_{J,J'}\delta_{(J-1)^{\ss}\subseteq J'\subseteq J-1}}$ and $\bigbra{\nu_{J,J'}\delta_{(J-1)^{\ss}\subseteq J'\subseteq J-1}}$ are also conjugated by $Q$.

\begin{proof}[Proof of the main result]
    The case $J_{\rhobar}=\cJ$ is proved by \cite[Thm.~3.1.3]{BHHMS3}. The case $J_{\rhobar}=\emptyset$ is proved by \cite[Thm.~1.1]{Wang3}. In the rest of the proof we assume that $J_{\rhobar}\notin\set{\emptyset,\cJ}$.
    
    As in the proof of \cite[Thm.~1.1]{Wang3}, it suffices to show that $\Hom_A(D_A(\pi),A)(1)\cong D_A^{\otimes}(\rhobar)$. By \cite[Prop.~C.3(iii)]{Wang2} and \cite[Cor.~C.4]{Wang2}, it suffices to compare the matrices $\Mat(\varphi)$ (see \eqref{Constants Eq Mat phi}) and $\Mat(\varphi)'$ (see \eqref{Constants Eq Mat phi'}). Then by \eqref{Constants Eq gammaJ new}, Lemma \ref{Constants Lem conjugation} and the sentence that follows, it suffices to show that
    \begin{enumerate}
        \item 
        $\gamma_{\emptyset,\emptyset}=\nu_{\emptyset,\emptyset}$;
        \item 
        $\gamma_{(J^*-1)^{\ss},J^*}\gamma_{J^*,(J^*-1)^{\ss}}=\nu_{(J^*-1)^{\ss},J^*}\nu_{J^*,(J^*-1)^{\ss}}$;
        \item 
        $\gamma(J)=\nu(J)$ for $J\nsubseteq J_{\rhobar}$ and $J\neq J^*$ (see \eqref{Constants Eq gammaJ} for $\gamma(J)$ and \eqref{Constants Eq nuJ} for $\nu(J)$.
    \end{enumerate}
    Indeed, by Lemma \ref{Constants Lem muempty} and \eqref{Constants Eq gammaJJ'} we have $\gamma_{\emptyset,\emptyset}=\xi$ (see \eqref{Constants Eq rhobar} for $\xi$), which equals $\nu_{\emptyset,\emptyset}$ by \eqref{Constants Eq nuJJ'}. By Corollary \ref{Constants Cor muJ*} and \eqref{Constants Eq gammaJJ'} we have $\gamma_{(J^*-1)^{\ss},J^*}\gamma_{J^*,(J^*-1)^{\ss}}=1$, which equals $\nu_{(J^*-1)^{\ss},J^*}\nu_{J^*,(J^*-1)^{\ss}}$ by \eqref{Constants Eq nuJJ'}. Finally, for $J\subseteq\cJ$ such that $J\nsubseteq J_{\rhobar}$ and $J\neq J^*$, by \eqref{Constants gamma Up c'}, Proposition \ref{Constants Prop UpJ}, Proposition \ref{Constants Prop c'J simple} and Proposition \ref{Constants Prop c'J new} we have $\gamma(J)=\beta^{B(J)}d(J)$ (see \eqref{Constants Eq ABd} for $B(J)$ and $d(J)$), which equals $\nu(J)$ by definition. This completes the proof.
\end{proof}

\bibliography{1}

\newcommand{\etalchar}[1]{$^{#1}$}
\begin{thebibliography}{LLHLM20}

\bibitem[BD14]{BD14}
Christophe Breuil and Fred Diamond.
\newblock Formes modulaires de {H}ilbert modulo {$p$} et valeurs d'extensions entre caract\`eres galoisiens.
\newblock {\em Ann. Sci. \'{E}c. Norm. Sup\'{e}r. (4)}, 47(5):905--974, 2014.

\bibitem[BDJ10]{BDJ10}
Kevin Buzzard, Fred Diamond, and Frazer Jarvis.
\newblock On {S}erre's conjecture for mod {$\ell$} {G}alois representations over totally real fields.
\newblock {\em Duke Math. J.}, 155(1):105--161, 2010.

\bibitem[BHH{\etalchar{+}}a]{BHHMS2}
Christophe Breuil, Florian Herzig, Yongquan Hu, Stefano Morra, and Benjamin Schraen.
\newblock Conjectures and results on modular representations of {$\GL_n(K)$} for a {$p$}-adic field {$K$}.
\newblock preprint, 2021.

\bibitem[BHH{\etalchar{+}}b]{BHHMS3}
Christophe Breuil, Florian Herzig, Yongquan Hu, Stefano Morra, and Benjamin Schraen.
\newblock Multivariable ($\varphi$,$\mathcal{O}_{K}^\times$)-modules and local-global compatibility.
\newblock preprint, 2022.

\bibitem[BHH{\etalchar{+}}23]{BHHMS1}
Christophe Breuil, Florian Herzig, Yongquan Hu, Stefano Morra, and Benjamin Schraen.
\newblock Gelfand-{K}irillov dimension and mod {$p$} cohomology for {$\rm GL_2$}.
\newblock {\em Invent. Math.}, 234(1):1--128, 2023.

\bibitem[BP12]{BP12}
Christophe Breuil and Vytautas Pa\v{s}k\={u}nas.
\newblock Towards a modulo {$p$} {L}anglands correspondence for {${\rm GL}_2$}.
\newblock {\em Mem. Amer. Math. Soc.}, 216(1016):vi+114, 2012.

\bibitem[Bre14]{Bre14}
Christophe Breuil.
\newblock Sur un probl\`eme de compatibilit\'{e} local-global modulo {$p$} pour {${\rm GL}_2$}.
\newblock {\em J. Reine Angew. Math.}, 692:1--76, 2014.

\bibitem[DL21]{DL21}
Andrea Dotto and Daniel Le.
\newblock Diagrams in the {${\op{mod}}\ p$} cohomology of {S}himura curves.
\newblock {\em Compos. Math.}, 157(8):1653--1723, 2021.

\bibitem[EGS15]{EGS15}
Matthew Emerton, Toby Gee, and David Savitt.
\newblock Lattices in the cohomology of {S}himura curves.
\newblock {\em Invent. Math.}, 200(1):1--96, 2015.

\bibitem[FL82]{FL82}
Jean-Marc Fontaine and Guy Laffaille.
\newblock Construction de repr\'{e}sentations {$p$}-adiques.
\newblock {\em Ann. Sci. \'{E}cole Norm. Sup. (4)}, 15(4):547--608 (1983), 1982.

\bibitem[Fon90]{Fon90}
Jean-Marc Fontaine.
\newblock Repr\'{e}sentations {$p$}-adiques des corps locaux. {I}.
\newblock In {\em The {G}rothendieck {F}estschrift, {V}ol. {II}}, volume~87 of {\em Progr. Math.}, pages 249--309. Birkh\"{a}user Boston, Boston, MA, 1990.

\bibitem[Hu16]{Hu16}
Yongquan Hu.
\newblock Valeurs sp\'eciales de param\`etres de diagrammes de {D}iamond.
\newblock {\em Bull. Soc. Math. France}, 144(1):77--115, 2016.

\bibitem[Le19]{Le19}
Daniel Le.
\newblock Multiplicity one for wildly ramified representations.
\newblock {\em Algebra Number Theory}, 13(8):1807--1827, 2019.

\bibitem[LLHLM20]{LLHLM20}
Daniel Le, Bao~V. Le~Hung, Brandon Levin, and Stefano Morra.
\newblock Serre weights and {B}reuil's lattice conjecture in dimension three.
\newblock {\em Forum Math. Pi}, 8:e5, 135, 2020.

\bibitem[Wana]{Wang3}
Yitong Wang.
\newblock Lubin--{T}ate and multivariable $(\varphi,\mathcal{O}_{K}^{\times})$-modules in dimension 2.
\newblock preprint, 2024.

\bibitem[Wanb]{Wang2}
Yitong Wang.
\newblock On the rank of the multivariable $(\varphi,\mathcal{O}_{K}^{\times})$-modules associated to mod $p$ representations of $\mathrm{GL}_2({K})$.
\newblock preprint, 2024.

\end{thebibliography}
\bibliographystyle{alpha}

\end{document}